\newcommand{\cbrack}[2]{\llbracket #1, #2 \rrbracket}
\newtheorem{theorem}{Theorem}[section]
\newtheorem{corollary}[theorem]{Corollary}
\newtheorem{proposition}[theorem]{Proposition}
\theoremstyle{definition}
\newtheorem{definition}[theorem]{Definition}
\theoremstyle{remark}
\newtheorem{remark}[theorem]{Remark}
\newtheorem{example}[theorem]{Example}
\newtheorem{examples}[theorem]{Examples}
\newcommand{\cE}{\mathcal{E}}
\newcommand{\cI}{\mathcal{I}}
\newcommand{\cL}{\mathcal{L}}
\newcommand{\cM}{\mathcal{M}}
\newcommand{\cN}{\mathcal{N}}
\newcommand{\fg}{\mathfrak{g}}
\newcommand{\fX}{\mathfrak{X}}
\newcommand{\bA}{\mathbb{A}}
\newcommand{\bN}{\mathbb{N}}
\newcommand{\bZ}{\mathbb{Z}}
\newcommand{\Hom}{\mbox{Hom}}
\newcommand{\Alie}{(A\to M,[\cdot,\cdot],\rho)}
\DeclareMathOperator{\id}{Id} \DeclareMathOperator{\im}{im}
 \DeclareMathOperator{\End}{End}
\begin{document}
\sloppy
\title{The geometry of graded cotangent bundles}

\author{Miquel Cueca}
\address{Instituto de Matem\'atica Pura e Aplicada,
Estrada Dona Castorina 110, Rio de Janeiro, 22460-320, Brazil }
\email{micueten@impa.br}
\thanks{\textbf{\textit{Keywords:}} Graded manifolds, Higher Courant algebroids, Higher Dirac structures, AKSZ} 
\subjclass[2010]{58D50, 53D17, 53D18}

\begin{abstract}
Given a vector bundle $A\to M$ we study the geometry of the graded manifolds $T^*[k]A[1]$, including their canonical symplectic structures, compatible $Q$-structures and Lagrangian $Q$-submanifolds. We relate these graded objects to classical structures, such as higher Courant algebroids on $A\oplus\bigwedge^{k-1}A^*$ and higher Dirac structures therein, semi-direct products of Lie algebroid structures on $A$ with their coadjoint representations up to homotopy, and  branes on certain AKSZ $\sigma$-models.
\end{abstract}

\maketitle
\tableofcontents

\section{Introduction}

The present work is devoted to the study of the graded manifolds $T^*[k]A[1]$, where $A$ is a vector bundle over a manifold $M$, in terms of classical geometric objects. When $A\to M$ carries the structure of a Lie algebroid, these shifted cotangent bundles naturally arise in connection with different aspects
of Poisson geometry and related higher structures. For example, (i) they codify higher Courant brackets, (ii) they encode (shifted) semi-direct products of the Lie algebroid with their coadjoint representation up to homotopy, and (iii) they can be regarded as targets for AKSZ $\sigma$-models. We will explore these three directions in this paper.

To explain the relation with Courant algebroids, let us recall that, 
in the $1990's$,  Courant and Weinstein \cite{cou:bey} introduced the standard Courant algebroid $TM\oplus T^*M$ over a manifold $M$, and showed that Dirac structures therein (i.e., maximal isotropic, involutive subbundles) gave a unified perspective to various classical geometric structures. The connection with graded geometry was subsequently established by  Roytenberg, \v{S}evera and Vaintrob, who realized that the geometry of the standard Courant algebroid is codified by the degree $2$ symplectic $Q$-manifold
$T^*[2]T[1]M$, see \cite{roy:on, sev:some}; moreover, in this picture Dirac structures in $TM\oplus T^*M$ correspond to lagrangian $Q$-submanifolds of $T^*[2]T[1]M$.

More recently, various authors have considered ``higher'' versions of the  standard Courant algebroid on the vector bundle $TM\oplus \bigwedge^k T^*M$ (see e.g. \cite{hag:nam,hit:gen}), given by a $\bigwedge\nolimits^{k-1}T^*M$-pairing and a bracket on its space of sections as follows:
\begin{equation*}
\begin{array}{ll}
\langle X+\omega, Y+\tau\rangle=i_X \tau+i_Y\omega,\\
\cbrack{X+\omega}{ Y+\tau}=[X,Y]+\cL_X\tau-i_Yd\omega.
\end{array}
\end{equation*}
More generally, for any Lie algebroid $A\to M$, one can consider similar  structures on $A \oplus \bigwedge^k A^*$. Many proposals have appeared for possible analogues of Dirac structures in this context, see e.g. \cite{bi:dir, bur:hig, hag:nam, wad:nam, zam:inf}.

In this article, we follow \cite{roy:on} and use the graded symplectic manifolds $T^*[k]A[1]$ to describe the  higher Courant structures on $A\oplus \bigwedge^{k-1}A^*$, in such a way that the symplectic structure on $T^*[k]A[1]$ corresponds to the pairing on  $A\oplus \bigwedge^{k-1}A^*$.  This correspondence relies on the geometric description of (non-negatively) graded manifolds in terms of vector bundles given in \cite{bur:frob}, see Section~\ref{S2}.  The relation between $T^*[k]A[1]$ and the geometry of $A\oplus \bigwedge^{k-1}A^*$ has been observed in previous works \cite{bou:aks, zam:inf}, and here we give the precise correspondence.
 
Symplectic $Q$-structures on $T^*[k]A[1]$ (i.e., degree $1$ homological symplectic vector fields) are always
determined by degree $k+1$ functions $\theta$ on $T^*[k]A[1]$ satisfying the classical master equation $\{\theta,\theta\}=0$.
 Using the derived bracket construction (see e.g. \cite{roy:on}), we see that such functions give rise to brackets on sections of $A\oplus \bigwedge^{k-1}A^*$ that are compatible with the pairing. For $k=2$, these functions  have been 
 geometrically classified in \cite{kos:qua}. For $k=3$ we present a similar result in Theorem~\ref{Q=3}, see also \cite{gru:h-t, ike:pq3}, and for $k>3$ we show that such functions have a particularly simple description: they are the same as a Lie algebroid structure on $A\to M$ together with $H\in\Gamma\bigwedge^{k+1}A^*$ such that $d_A H=0$, see Theorem \ref{Q>3}. In this last case, the bracket on $A\oplus \bigwedge^{k-1}A^*$ is given by the usual formula:
\begin{equation*}
\llbracket a+\omega, b+\eta\rrbracket_H=[a,b]+\cL_a\eta-i_bd\omega-i_bi_aH.
\end{equation*}

Once compatible  $Q$-structures on $T^*[k]A[1]$ are understood, we define higher Dirac structures on $(A\oplus \bigwedge^{k-1}A^*,\langle\cdot,\cdot\rangle,\llbracket\cdot,\cdot\rrbracket_H)$ as the lagrangian $Q$-submanifolds of $(T^*[k]A[1],\{\cdot,\cdot\},\theta_H)$, and we provide their classical description in Corollaries \ref{charLagrangian} and \ref{charLag}, and Theorem \ref{Nan-Di-Qlag}.
Just as for ordinary Dirac structures, these higher Dirac structures encode interesting geometric objects. Examples include Nambu tensors \cite{duf:poi,tak:nam,wad:nam}, $k$-plectic structures \cite{can:on,rog:inf} and foliations. In the particular case when $A=TM$, $H=0$, and the higher Dirac structure has support on the whole $M$, we recover the notion of a (regular) Nambu-Dirac structure as defined by Hagiwara in \cite{hag:nam}. See Sections \ref{S3} and \ref{S4} for details.

If $\mathfrak{g}$ is a Lie algebra, recall that its cotangent bundle $T^*\mathfrak{g}$ has a Lie algebroid structure coming from its identification with the semi-direct product $\mathfrak{g}\ltimes \mathfrak{g}^*$ with respect to the coadjoint representation. We show in Section \ref{S5} that this picture extends to graded cotangent bundles $T^*[k]A[1]$ of Lie algebroids, but it involves a more general notion of representation. Indeed, for a Lie algebroid $\Alie$, it is well known that, in order to make sense of the adjoint and coadjoint representations, one needs the notion of {\em representation up to homotopy}, as defined in \cite{cam:rep}. There are two viewpoints to what should be the semi-direct product of a Lie algebroid with a $2$-term representation up to homotopy: one leads to VB-algebroids, see \cite{gra:vb}, while the other gives $L_k$-algebroids (as in \cite{bon:on}), see \cite{chen:hig}. We relate these two perspectives and prove that the $Q$-manifolds $(T^*[k]A[1],Q=\{\theta,\cdot\})$ appear naturally as the semi-direct product of $A$ with its coadjoint representation up to homotopy. We also incorporate the $H$-twisted case in this context.

In regard to $\sigma$-models, recall that the seminal work \cite{AKSZ} describes a procedure, referred to as ``AKSZ'', to create Topological Field Theories using the machinery of supermanifolds and the Batalin-Vilkovisky formalism. In \cite{roy:AKSZ} Roytenberg describes a graded refinement of the AKSZ procedure in which the space of fields is given by
\begin{equation*}
\mathrm{Maps} (T[1]\Sigma^{n+1},\cM),
\end{equation*}
where $\Sigma^{n+1}$ is a manifold of dimension $n+1$ and $\cM$ is a degree $n$ symplectic $Q$-manifold. If the manifold $\Sigma$ has boundary, one needs to impose additional conditions on the fields, and one possibility is given by {\em branes}; in this case, the boundary is required to take values in a lagrangian $Q$-submanifold of $\cM$. It follows that the symplectic $Q$-manifolds $(T^*[k]A[1],\{\cdot,\cdot\},\theta_H)$ produce AKSZ $\sigma$-models, and the study of their lagrangian $Q$-submanifolds is important to determine possible boundary conditions for the fields; see Section \ref{S6}. The case when $A\to M$ is a Lie algebra is known as {\em BF-theory} and was deeply studied  e.g. in \cite{cat:top, cat:loop, cat:hig}.

At the end of this article, we include general remarks and speculations concerning the integration of the symplectic $Q$-manifolds $(T^*[k]A[1],\{\cdot,\cdot\},\theta_H)$ by symplectic $n$-groupoids and their lagrangian $Q$-submanifolds by lagrangian $n$-subgroupoids, following recent ideas of \v{S}evera \cite{sev:int}.

\subsection*{Acknowledgements}
I thank Henrique Bursztyn and Rajan Mehta for fruitful discussions, helpful advice and for comments and suggestions on previous drafts of this paper. This work also profited from enlightening conversations with Alejandro Cabrera and Marco Zambon as well as all the symplectic community of Rio de Janeiro. This research was supported by a Ph.D. grant given by CNPq.

\section{Graded manifolds}\label{S2}
In this section we recall the basic notions of (non-negatively graded) graded manifolds, such as submanifolds, vector fields and graded Poisson structures, see e.g. \cite{cat:int} for more details. Since our objective is to study the graded manifolds $T^*[k]A[1]$ in terms of classical geometric objects, we recall the correspondence established in \cite{bur:frob} describing graded manifolds in terms of vector bundle data.  
    
\subsection{Basic definitions}
    A \emph{graded manifold of degree $n$} (or simply \emph{$n$-manifold}) is a pair $\cM=(M, C_\cM)$ where $M$ is a smooth manifold and $C_\cM$ is a sheaf of graded commutative algebras such that $\forall p\in M,\quad \exists \ U$ open around $p$ such that
    \begin{equation*}
        (C_\cM)_{|U}\cong C^{\infty}(M)_{|U}\otimes Sym V
    \end{equation*}
    as sheaves of graded commutative algebras and where $Sym V$ denotes the graded symmetric vector space of a graded vector space $V=\oplus_{i=1}^n V_i$. The manifold $M$ is known as the \emph{body} of $\cM$ and $f\in C^k_\cM$ is called a \emph{homogeneous function of degree $k$}; we write $|f|=k$ for the degree of $f$. We define the  total dimension of $\cM$ as
    \begin{equation}
    	totdim\cM =\dim M+\dim V=\dim M+\sum_{i=1}^n \dim V_i.
    \end{equation}
    A morphism between graded manifolds $\Psi:\cM\to \cN$ is a pair
    \begin{equation*}
        \left\{
        \begin{array}{ll}
            \psi:M\to N & \text{smooth map}, \\
            \psi^\sharp:C_\cN\to\psi_* C_\cM & \text{degree preserving morphism of sheaves of algebras over } N.
        \end{array}
        \right.
    \end{equation*}
    With these definitions, graded manifolds with their morphisms form a category.
    
    \begin{example}
    Given a vector bundle $A\to M$ we can define the $1$-manifold 
    \begin{equation}
     A[1]=(M,\Gamma\bigwedge\nolimits^\bullet A^*).
    \end{equation}
    where we give the sheaf in terms of their global sections. In this case, any vector bundle morphism induces a morphism between their corresponding graded manifolds. The functor shifting by $1$, $A\to A[1]$, is an equivalence between the category of vector bundles and the category of degree $1$ manifolds.
    \end{example}
    
    A \emph{degree $n$ algebra bundle} is a pair $(E\to M, m)$ where $E=\oplus_{i=1}^n E_i\to M$ is a graded vector bundle and $m: \Gamma E\otimes \Gamma E\to \Gamma E$ is a graded algebra structure on $\Gamma E\to M$, i.e. $e_i\in\Gamma E_i, \ e_j\in\Gamma E_j, m(e_i,e_j)\in\Gamma E_{i+j}$.
    
    \begin{example}
    Given a graded vector bundle $F=\oplus_{i=1}^{n}F_i\to M$, denote by $(Sym F)^{\leq n}\to M$ the graded vector bundle
	\begin{equation*}
	{(Sym F)^{\leq n}}_k=\left\{\begin{array}{ll}
	Sym^k F& 1\leq k\leq n,\\
	0& \text{ otherwise}.
	\end{array}\right.
\end{equation*}

For any homogeneous $\omega,\eta\in\Gamma (Sym F)^{\leq n}$ with degrees $k$ and $l$ we denote by $\omega\cdot\eta$ the operation that is the graded symmetric product if $k+l\leq n$ and $0$ otherwise. Therefore, $( (Sym F)^{\leq n}\to M, \cdot)$ is a degree $n$ algebra bundle.	
    \end{example}
    
\begin{definition}
     We say that a degree $n$ algebra bundle, $(E,m)$, is \emph{admissible} if there exists $F=\oplus_{i=1}^n F_i\to M$ and a graded vector bundle isomorphism
    \begin{equation*}
        \phi: E \to (Sym F)^{\leq n}
    \end{equation*}
    over the identity which is also a graded algebra isomorphism between $(E, m)$ and $((Sym F)^{\leq n},\cdot)$.
    \end{definition}
        
     Let $\cM=(M,C_\cM)$ be a degree $n$-manifold. By the local condition of the sheaf we have that
    \begin{equation}
        \forall i\in\{1,\cdots, n\}, \ \exists \ E_i\to M \text{ vector bundle such that } C^i_\cM=\Gamma E_i
    \end{equation}
    and the mutiplication of functions on $C_\cM$ gives rise to a graded algebra structure 
    $$m:\Gamma E\otimes \Gamma E\to\Gamma E$$
    on the graded vector bundle $E=\oplus_{i=1}^nE_i\to M$. One can also see that it is admissible, for more details see \cite{bur:frob}.
    
    \begin{remark}
    If we dualize admissible algebra bundles we obtain admissible coalgebra bundles and they naturally define a category. It is proved in \cite{bur:frob} that the category of admissible degree $n$ coalgebra bundle is equivalent to the category of degree $n$-manifolds. For the purpose of this article, it suffices to work with algebra bundles.
    \end{remark}
  
Let $(M,C_\cM)$ be a graded manifold. We say that $\cI\subseteq C_\cM$ is a subsheaf of \emph{homogeneous ideals} if for all $U$ open of $M, \ \cI_{|U}$ is an ideal of ${C_\cM}_{|U}$, i.e. ${C_\cM}_{|U}\cdot \cI_{|U}\subseteq \cI_{|U}$ and $\forall f\in \cI_{|U}$ its homogeneous components belong also to $\cI_{|U}$.

Given a subsheaf of homogeneous ideals $\cI\subseteq C_\cM$ we can define a subset of $M$ by
\begin{equation*}
Z(\cI)=\{  x\in M \ | \quad f(x)=0\ \forall f\in\cI^0=\cI\cap C_\cM^0\}. 
\end{equation*}
In addition we say that a homogeneous ideal is \emph{regular} if $\forall p\in Z(\cI), \ \exists \ U\subseteq M$ open around $p$ where there are local coordinates $\{x^i, y^j\}$ for which $\cI_{|U}=\langle y^j\rangle$. In this case $Z(\cI)$ becomes a closed embedded submanifold of $M$.

Observe that given a graded manifold $(M, C_\cM)$ and a subsheaf of regular homogeneous ideals $\cI\subseteq C_\cM$, we define a new graded manifold by $\cN=(N,C_{\cM}/\cI)$, where $N=Z(\cI)$. The graded manifold $\cN$ has a natural morphism to $\cM$ that we denote by $i:\cN\to \cM$.
  
  	Given $\cM=(M, C_\cM)$ a graded manifold we define a \emph{closed submanifold} as  a subsheaf of regular homogeneous ideals $\cI\subseteq C_\cM$. For simplicity, unless stated otherwise, we refer to subsheaves of regular homogeneous ideals just as ideals and submanifolds will be considered to be closed, although most of the results are valid in general.

\begin{proposition}[see \cite{bur:frob}]\label{sub=ideals}
	Let $\cM=(M, C_\cM)$ be a graded manifold with corresponding algebra bundle $(E=\oplus E_i, m)$. There is a 1-1 correspondence between:
	\begin{itemize}
		\item Submanifolds of $\cM$.
		\item Graded Subbundles $F=\oplus F_i\subseteq E$ over submanifolds such that 
		\begin{equation}\label{submanifoldeq}
		F_{k}\cap\bigoplus_{i+j=k} m(E_i, E_j)= \bigoplus_{i+j=k}m(F_i, E_j).
		\end{equation}
	\end{itemize}
\end{proposition}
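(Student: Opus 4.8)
The plan is to reduce everything to a fiberwise computation, using the admissibility of $(E,m)$ to model $C_\cM$ locally on the graded-symmetric algebra generated by the positive-degree coordinates, and then to match the two notions of submanifold against the monomial basis this supplies. Throughout I fix the generator/decomposable splitting coming from admissibility: over each point $E_k$ decomposes as a space of ``new generators'' complementary to the decomposables $\bigoplus_{i+j=k}m(E_i,E_j)$, and the degree-$k$ coordinates of $\cM$ are a frame for these generators. I would construct a map in each direction and check that they are mutually inverse, the condition \eqref{submanifoldeq} being exactly what makes the local monomial models on the two sides agree.

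For the forward map (submanifold to subbundle), given a regular ideal $\cI$ I set $N=Z(\cI)$ and, for $p\in N$ and $i\ge1$, define the fiber $F_{i,p}=\{\,s(p):s\in\cI^i\,\}\subseteq E_{i,p}$ where $\cI^i=\cI\cap\Gamma E_i$. Since $\cI^0\cdot\Gamma E_i\subseteq\cI^i$ and elements of $\cI^0$ vanish on $N$, this fiber depends only on $\cI^i$ modulo $\cI^0\cdot\Gamma E_i$; regularity then shows that in adapted coordinates $\{x,y\}$ with $\cI=\langle y\rangle$ the space $F_{i,p}$ is spanned precisely by the degree-$i$ coordinate monomials containing at least one positive-degree generator $y$. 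These have constant rank and assemble into a subbundle $F_i\subseteq E_i|_N$. The inclusion $\bigoplus m(F_i,E_j)\subseteq F_k\cap\bigoplus m(E_i,E_j)$ in \eqref{submanifoldeq} is immediate, as products are decomposable and $m(s_i,t_j)\in\cI^k$ whenever $s_i\in\cI^i$; the reverse inclusion is the assertion that a decomposable monomial lying in $F_k$ (length $\ge2$, carrying a $y$-factor) factors with one factor carrying the $y$, which is visible in the monomial basis furnished by regularity.

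For the backward map (subbundle to submanifold), given a graded subbundle $F\subseteq E$ over a submanifold $N$ satisfying \eqref{submanifoldeq}, I let $\cI^0$ be the vanishing ideal of $N$ and, for $i\ge1$, set $\cI^i=\{\,s\in\Gamma E_i:s|_N\in\Gamma F_i\,\}$. That $\cI$ is a homogeneous ideal follows from the ``$\supseteq$'' half of \eqref{submanifoldeq}, which gives $m(F_i,E_j)\subseteq F_{i+j}$ and hence $m(t,s)|_N=m(t|_N,s|_N)\in\Gamma F_{i+j}$ for $t\in\Gamma E_j$, $s\in\cI^i$. The substantive point is regularity, and here the ``$\subseteq$'' half is essential. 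I would build adapted coordinates by induction on degree: starting from base coordinates adapted to $N\subseteq M$ and a frame of $E_1|_N$ split according to $F_1$, at each degree $k$ the equality $F_k\cap\bigoplus m(E_i,E_j)=\bigoplus m(F_i,E_j)$ says that the decomposable part of $F_k$ is spanned exactly by the lower-degree monomials already carrying a $y$-factor. Passing to $E_k$ modulo decomposables, $F_k$ then has constant rank there, so I can choose new degree-$k$ generators whose classes span this image; correcting each by a decomposable makes it lie in $F_k$, yielding the new coordinates $y_k$, with a complementary frame giving the $x_k$. In these coordinates $\cI=\langle y\rangle$, proving regularity.

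Finally the two constructions are inverse: both the fiber $F_{i,p}$ produced from $\cI$ and the ideal $\cI^i$ rebuilt from $F$ are characterized, in the adapted monomial basis, as the span of (respectively, the sections restricting to) degree-$i$ monomials containing a $y$-factor, so composing in either order returns the original datum. The only genuine obstacle is the inductive construction of adapted coordinates in the backward map, since it is there that \eqref{submanifoldeq} must be used to guarantee that $F$ meets the decomposables in no more and no less than the ideal generated by the lower-degree $y$'s; the forward map, the verification of \eqref{submanifoldeq}, and the mutual-inverse check are all routine once this local model is established.
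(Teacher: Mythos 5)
The paper itself offers no proof of this proposition: it is imported from \cite{bur:frob} (listed as work in progress), so there is no in-paper argument to compare yours against; your proposal has to stand on its own, and it does. The route you take is the natural one and, as far as I can judge, correct: admissibility reduces everything to the local model $C^\infty(U)\otimes Sym\,V$, both notions of submanifold are matched against the monomial frame, and the genuine content---regularity of the ideal built from a subbundle $F$---is carried by your induction on degree. The key step is exactly where you place it: the equality \eqref{submanifoldeq}, combined with the inductive hypothesis on lower degrees, identifies $F_k\cap\sum_{i+j=k}m(E_i,E_j)$ with the span over $N$ of the decomposable degree-$k$ monomials carrying a positive-degree $y$-factor, and this is the constant-rank statement that lets you lift a frame of the image of $F_k$ in $E_k$ modulo decomposables to new generators $y_k$ lying in $F_k$, completed by complementary generators $x_k$. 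Two points should be made explicit in a written version. First, a degree-$n$ manifold has functions in all degrees, whereas $E$ records only degrees $1,\dots,n$ with the product truncated above $n$; so the sheaf you define degreewise must be replaced by the ideal it generates in $C_\cM$, and your adapted coordinates are what show that this generated ideal is regular and that its components in degrees $\le n$ are exactly $\widehat{\Gamma}(F_i)$, i.e.\ nothing overflows. Second, your induction tacitly uses that the decomposables $\sum_{i+j=k}m(E_i,E_j)$ form a subbundle of $E_k$ and that the degree-$k$ monomials of length at least two in the previously chosen generators form a frame of it; this is precisely what admissibility buys (spanning plus a dimension count in the truncated symmetric-algebra model), and it is worth isolating as a lemma, since it is also what legitimizes calling your new sections coordinates of the graded manifold.
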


\subsection{Vector fields}
Another aspect of graded manifolds that we need are vector fields. Let $\cM=(M,C_\cM)$ be a graded manifold. We define a \emph{vector field of degree $k$}, denoted by  $X\in\fX^k(\cM)$ and by $|X|$ its degree, as a degree $k$ derivation of the graded algebra $C_\cM$, i.e
	\begin{equation*}
		X:C_\cM^i\to C_\cM^{i+k} \quad \text{such that } \ X(fg)=X(f)g+(-1)^{|f|k}fX(g).
	\end{equation*}
As it happens in usual geometry, the graded commutator of vector fields is again a vector field, so $(\fX^\bullet(\cM),[\cdot,\cdot])$ forms a graded Lie algebra.

A \emph{$Q$-manifold} is a pair $(\cM, Q)$ where $\cM$ is a graded manifold and $Q\in\fX^{1}(\cM)$ such that 
	\begin{equation}\label{Q-man}
		[Q,Q]=2Q^2=0.
	\end{equation}
Given two $Q$-manifolds, a \emph{$Q$-morphism} is a graded manifold morphism for which the vector fields are related. With this notion $Q$-manifolds also form a category.  The following classical result, due to Vaintrob, exemplifies how vector fields on graded manifolds codify interesting information.

\begin{theorem}[see \cite{vai:lie}]\label{vaintrobcorr}
 There is an equivalence of categories between:
 
 \begin{itemize}
 	\item Lie algebroids $\Alie$.
 	\item Degree 1 $Q$-manifolds $(\cM,Q)$.
 \end{itemize}
\end{theorem}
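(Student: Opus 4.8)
The plan is to reduce the statement to the algebraic description of the function sheaf of a degree $1$ manifold and then to match the condition $Q^2=0$ with the Lie algebroid axioms. First I would invoke the equivalence $A\mapsto A[1]$ from the Example above between vector bundles and degree $1$ manifolds; this already identifies the underlying objects on the two sides, so that every degree $1$ manifold is (isomorphic to) some $A[1]=(M,\Gamma\bigwedge^\bullet A^*)$, and it remains only to see that a homological vector field $Q$ of degree $1$ on $A[1]$ corresponds exactly to a Lie algebroid structure on $A$.

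Next I would observe that the graded-commutative algebra $C_{A[1]}=\Gamma\bigwedge^\bullet A^*$ is generated over $\bR$ by its degree $0$ part $C^\infty(M)$ and its degree $1$ part $\Gamma(A^*)$. Since any derivation is determined by its values on generators, a degree $1$ vector field $Q$ is completely encoded by the two maps $Q|_{C^\infty(M)}:C^\infty(M)\to\Gamma(A^*)$ and $Q|_{\Gamma(A^*)}:\Gamma(A^*)\to\Gamma\bigwedge^2 A^*$. The Leibniz rule forces the first to be a derivation of $C^\infty(M)$ valued in the module $\Gamma(A^*)$, hence of the form $f\mapsto\rho^*(df)$ for a unique bundle map $\rho:A\to TM$; dualizing the second yields a skew bracket $[\cdot,\cdot]:\Gamma A\times\Gamma A\to\Gamma A$ through the Cartan-type formula $\langle Q\xi,a\wedge b\rangle=\rho(a)\langle\xi,b\rangle-\rho(b)\langle\xi,a\rangle-\langle\xi,[a,b]\rangle$, the Leibniz rule of $Q$ translating into the Leibniz rule $[a,fb]=f[a,b]+(\rho(a)f)b$ of the bracket. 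Conversely, these formulas extend any pair $(\rho,[\cdot,\cdot])$ to a unique degree $1$ derivation, so $Q$'s and $(\rho,[\cdot,\cdot])$'s are in bijection.

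I would then match $Q^2=0$ with the remaining axioms. Because $Q^2$ is again a derivation (of degree $2$), it vanishes identically if and only if it vanishes on the generators $C^\infty(M)$ and $\Gamma(A^*)$. A direct computation shows that $Q^2|_{C^\infty(M)}=0$ is equivalent to the anchor being bracket-preserving, $\rho([a,b])=[\rho(a),\rho(b)]$, while $Q^2|_{\Gamma(A^*)}=0$ is equivalent to the Jacobi identity for $[\cdot,\cdot]$; together these are precisely the Lie algebroid axioms. This establishes the bijection on objects.

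Finally, for morphisms I would verify that $Q$-morphisms correspond to Lie algebroid morphisms. A graded manifold morphism $A[1]\to A'[1]$ is a degree-preserving algebra map $\Gamma\bigwedge^\bullet (A')^*\to\Gamma\bigwedge^\bullet A^*$, and the $Q$-morphism condition is exactly that this map intertwines the two differentials. Taking as the definition of a Lie algebroid morphism (over possibly different bases) a vector bundle map whose induced pullback on forms commutes with $d_A=Q$, the correspondence on morphisms becomes essentially tautological. The main subtlety, and the step I expect to require the most care, is precisely this treatment of morphisms over \emph{different} bases: unlike the case of a fixed base, a Lie algebroid morphism cannot be phrased as a naive bracket-preserving bundle map, and one must check that the chain-map condition reproduces the correct classical notion (compatibility of anchors together with the appropriate bracket relation), which is most cleanly done by testing on the degree $0$ and degree $1$ generators as above.
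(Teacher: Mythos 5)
Your proposal is correct and takes essentially the same route as the paper, which (citing Vaintrob) exhibits exactly this correspondence: $A\leftrightarrow A[1]$ with $Q=d_A$ determined on the generators $C^\infty(M)$ and $\Gamma A^*$ by the Cartan-type formulas \eqref{dif}. Your elaboration — reading off $(\rho,[\cdot,\cdot])$ from the action of $Q$ on generators, matching $Q^2=0$ on generators with the anchor condition and Jacobi identity, and treating morphisms over different bases as chain maps — is the standard fleshing-out of the argument the paper only sketches.
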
 

The equivalence is as follows: A Lie algebroid $\Alie$ defines the degree $1$ manifold $A[1]=(M, C_{A[1]}=\Gamma\bigwedge^\bullet A^*)$ and the vector field $Q=d_A$ is just the Lie algebroid differential, determined by: 
\begin{equation}\label{dif} 
d_A f=\rho^*df,\quad d_A\alpha(a,b)=\cL_{\rho(a)}\alpha-\cL_{\rho(b)}\alpha-\alpha([a,b]),\quad f\in C^\infty(M), \ \alpha\in\Gamma A^*, \ a,b\in\Gamma A.
\end{equation}
and extended as a derivation of the wedge product.

	Let $(\cM, Q)$ be a $Q$-manifold and $\cN\subseteq\cM$ a submanifold with associated ideal $\cI$. We say that $\cN$ is a \emph{$Q$-submanifold} if 
	\begin{equation}
		Q(\cI)\subseteq \cI.
	\end{equation}		
In other words, the preceding equation means that $Q$ is tangent to the submanifold, and $(\cN, Q_{|\cN})$ becomes a $Q$-manifold.

Given a vector bundle $A\to M$, recall that its \emph{Atiyah algebroid} is a Lie algebroid, that we denote by $\bA_A\to M$, whose sections are the infinitesimal automorphism of $\Gamma A$, i.e. pairs $(D,\sigma)$ where $\sigma$ is a vector field on $M$ and $D:\Gamma A\to \Gamma A$ satisfying
\begin{equation}\label{der}
 D(fa)=fD(a)+\sigma(f)a.
\end{equation}

Just to end with this quick review of vector fields we also need their geometric characterization for manifolds of degree $1$.
\begin{proposition}\label{vfdeg1}
		Let $A\to M$ be a vector bundle and consider $A[1]$. Then:
		\begin{itemize}
			\item $\fX^{-1}(A[1])\simeq \Gamma A$ and $\fX^0 (A[1])\simeq\Gamma \bA_A$ as a $C^\infty(M)$-module.
			\item $\fX^\bullet(A[1])=\langle \fX^{-1}(A[1]), \fX^0(A[1])\rangle_{C_{A[1]}}$.
		\end{itemize}
		
	\end{proposition}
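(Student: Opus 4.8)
The plan is to exploit that $C_{A[1]}=\Gamma\bigwedge^\bullet A^*$ is generated as a graded $C^\infty(M)$-algebra by its components in degrees $0$ and $1$, namely $C^0_{A[1]}=C^\infty(M)$ and $C^1_{A[1]}=\Gamma A^*$. By the graded Leibniz rule, any derivation of $C_{A[1]}$ is completely determined by its restriction to $C^0_{A[1]}$ and $C^1_{A[1]}$; conversely, two derivations of the same degree agreeing on these two components coincide. This ``determination on generators'' principle reduces every claim to bookkeeping on generators.

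For the first bullet, first I would treat $\fX^{-1}(A[1])$. A degree $-1$ derivation $X$ necessarily kills $C^0_{A[1]}$ (there are no functions in degree $-1$), so the Leibniz rule forces $X|_{C^1}\colon\Gamma A^*\to C^\infty(M)$ to be $C^\infty(M)$-linear; hence $X|_{C^1}\in\Hom_{C^\infty(M)}(\Gamma A^*,C^\infty(M))\cong\Gamma A$. Writing the corresponding section as $a\in\Gamma A$, the contraction $\iota_a$ is a degree $-1$ derivation with the same restriction to $C^1_{A[1]}$, so $X=\iota_a$; this gives the $C^\infty(M)$-linear isomorphism $\fX^{-1}(A[1])\cong\Gamma A$. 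For $\fX^0(A[1])$, a degree $0$ derivation $X$ restricts on $C^0_{A[1]}$ to a derivation of $C^\infty(M)$, i.e. a vector field $\sigma\in\Gamma TM$, and on $C^1_{A[1]}$ to an operator $D^*\colon\Gamma A^*\to\Gamma A^*$ satisfying $D^*(f\alpha)=\sigma(f)\alpha+fD^*(\alpha)$. This is precisely the datum of a section of the Atiyah algebroid $\bA_{A^*}$, and via the canonical dualization isomorphism $\bA_{A^*}\cong\bA_A$ one obtains $\fX^0(A[1])\cong\Gamma\bA_A$.

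For the second bullet I would show that any $X\in\fX^k(A[1])$ decomposes as a $C_{A[1]}$-combination of the fields above. Its restriction $X|_{C^0}\colon C^\infty(M)\to\Gamma\bigwedge^kA^*$ is a $\bigwedge^kA^*$-valued vector field, hence an element of $\Gamma(\bigwedge^kA^*\otimes TM)\cong\Gamma\bigwedge^kA^*\otimes_{C^\infty(M)}\Gamma TM$; writing it as a finite sum $\sum_i\eta_i\otimes\sigma_i$ and lifting each $\sigma_i$ to a degree $0$ field $D_i\in\fX^0(A[1])$ (possible since the symbol map $\Gamma\bA_A\to\Gamma TM$ is surjective, e.g. via a connection), the field $X'=\sum_i\eta_i\cdot D_i$ satisfies $X'|_{C^0}=X|_{C^0}$. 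Then $Y=X-X'$ kills $C^0_{A[1]}$, so exactly as in the degree $-1$ case $Y|_{C^1}\colon\Gamma A^*\to\Gamma\bigwedge^{k+1}A^*$ is $C^\infty(M)$-linear, i.e. an element of $\Gamma(\bigwedge^{k+1}A^*\otimes A)\cong\Gamma\bigwedge^{k+1}A^*\otimes_{C^\infty(M)}\Gamma A$. Writing it as $\sum_j\omega_j\otimes a_j$, the field $Y''=\sum_j\omega_j\cdot\iota_{a_j}$ agrees with $Y$ on both $C^0_{A[1]}$ and $C^1_{A[1]}$, whence $Y=Y''$ by the uniqueness principle above and $X=X'+Y''\in\langle\fX^{-1}(A[1]),\fX^0(A[1])\rangle_{C_{A[1]}}$.

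The main obstacle is bookkeeping rather than conceptual: one must verify that left multiplication by a homogeneous function sends a derivation to a derivation (this uses graded commutativity of $C_{A[1]}$, and is automatic here because the functions $f$ tested against all have degree $0$, so every sign is trivial), and that the Serre--Swan identifications $\Gamma(E\otimes F)\cong\Gamma E\otimes_{C^\infty(M)}\Gamma F$ genuinely produce \emph{global} finite sums, so that no partition of unity or local patching is required. Once these standard facts are in place, the determination-on-generators principle closes every step.
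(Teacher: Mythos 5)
Your proof is correct, and while the first bullet follows essentially the paper's own argument (derivations are determined on the generators $C^\infty(M)$ and $\Gamma A^*$; the degree $-1$ restriction is $C^\infty(M)$-linear, hence a section of $A$; the degree $0$ restriction is a derivation-type operator, hence a section of the Atiyah algebroid), your treatment of the second bullet is genuinely different. The paper argues locally: in a trivializing chart $C_{A[1]}$ is generated by coordinates $\{x^i,\alpha^j\}$, so $\fX(A[1])$ is locally generated by $\frac{\partial}{\partial x^i}$ and $\frac{\partial}{\partial\alpha^j}$ over $C_{A[1]}$, and the global statement is left implicit (it requires patching the local decompositions, e.g.\ by a partition of unity). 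You instead work globally: you split off the restriction to $C^0_{A[1]}$ as $\sum_i \eta_i\cdot D_i$ using the Serre--Swan identification $\Gamma(\bigwedge^k A^*\otimes TM)\cong \Gamma\bigwedge^k A^*\otimes_{C^\infty(M)}\Gamma TM$ together with a connection lift through the surjective symbol map $\Gamma\bA_A\to\Gamma TM$, and then observe that the remainder kills $C^0_{A[1]}$ and is therefore handled exactly like the degree $-1$ case, via $\Hom_{C^\infty(M)}(\Gamma A^*,\Gamma\bigwedge^{k+1}A^*)\cong\Gamma(\bigwedge^{k+1}A^*\otimes A)$. What this buys is a decomposition by honest global finite sums, with no localization or patching step, which matches the global nature of the statement more closely; the paper's route is shorter and displays the local frame $\{\frac{\partial}{\partial x^i},\frac{\partial}{\partial\alpha^j}\}$ explicitly, which is what gets reused later for the Darboux-chart discussion of the symplectic structure. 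One further point in your favor: your passage through $\bA_{A^*}\cong\bA_A$ makes explicit a dualization that the paper's degree $0$ identification uses silently, since a vector field on $A[1]$ acts on $\Gamma A^*$, while the paper defines $\bA_A$ by operators on $\Gamma A$.
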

	\begin{proof}
		As a sheaf of algebras, $C_{A[1]}$ is generated by $C^0_{A[1]}=C^\infty(M)$ and $C^1_{A[1]}=\Gamma A^*$ and derivations are completely determined by their actions on generators. Then for $X\in\fX^{-1}(A[1])$ we have that it sends $C^\infty(M)$ to zero and $\Gamma A^*$ to $C^\infty(M)$, and it must be linear. Therefore $X\in\fX^{-1}(A[1])\Leftrightarrow X\in\Gamma A$.
		
	Any $X\in\fX^{0}(A[1])$ is characterized by two maps $D:\Gamma A^*\to \Gamma A^*$ and $\sigma:C^\infty(M)\to C^\infty(M)$ and since is a vector field we know that $\sigma$ is a vector field on $M$ and satisfy \eqref{der}. Therefore $X\in\fX^0(A[1])\Leftrightarrow (D,\sigma)\in\Gamma \bA_A$.
	
	Now since $C_{A[1]}$ as a sheaf is locally generated by $\{x_i,\alpha^j\}$, where $\{x^i\}$ are coordinates on $M$ and $\{\alpha^j\}$ base of local sections of $\Gamma A^*$, we have that $\fX(A[1])$ is locally generated by $\frac{\partial}{\partial x_i}$ and $\frac{\partial}{\partial \alpha^j}=e_j$ as a module over $C_{A[1]}$, so we have the result.
	\end{proof}

\subsection{Graded Poisson structures}
The last general definition that we need is that of a graded Poisson manifold, in particular the symplectic case. Consider $\cM=(M,C_\cM)$ a graded manifold. We say that $\{\cdot,\cdot\}$ is a  \emph{degreee k Poisson structure} on $\cM$ if $(C_\cM,\{\cdot,\cdot\})$ is a sheaf of graded Poisson algebras, i.e. $\{\cdot,\cdot\}:C^i_\cM\times C^j_\cM\to C^{i+j+k}_\cM$ satisfies:
	\begin{enumerate}
		\item $\{f,g\}=-(-1)^{(|f|+k)(|g|+k)}\{g,f\}$.
		\item $\{f,gh\}=\{f,g\}h+(-1)^{(|f|+k)|g|}g\{f,h\}$.
		\item $\{f,\{g,h\}\}=\{\{f,g\},h\}+(-1)^{(|f|+k)(|g|+k)}\{g,\{f,h\}\}$.
	\end{enumerate}	 
	
	In addition we say that the Poisson structure is \emph{symplectic}, i.e. non-degenerate, if $\forall p\in M,\  \exists \ U$ open around $p$ with local coordinates $\{ x^i\}$ of $\cM$ such that the matrix $(\{x^i,x^j\}^0 (q))$, where $\{x^i,x^j\}^0$ denotes the degree $0$ component of the function $\{x^i, x^j \}$, is invertible for all $q\in U$.

	Let $Q\in\fX^{i}(\cM)$ be a vector field. We say that $Q$ is \emph{Poisson} if
	\begin{equation}\label{Q-poisson}
		Q(\{f,g\})=\{Q(f),g\}+(-1)^{(|f|+k)i}\{f,Q(g)\} \qquad \forall f,g\in C_\cM.
	\end{equation}
	
	An important result due to Roytenberg is that on a graded symplectic manifold, depending on degree, symplectic vector fields are necessarily hamiltonian.
	
	\begin{proposition}[see \cite{roy:on}]\label{sym=ham}
Let $(\cM,\{\cdot,\cdot\})$ be a graded symplectic manifold of degree $k$ and $Q\in \fX^{l}(\cM)$ a vector field satisfying \eqref{Q-poisson}. If $l>-k$ then $Q=\{\theta,\cdot\}$ for some $\theta\in C^{k+l}_\cM$
\end{proposition}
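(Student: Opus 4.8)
The plan is to pass from the Poisson bracket to the underlying symplectic two-form and then invoke a graded Poincar\'e-type lemma supplied by the Euler (degree) vector field. Recall from \cite{roy:on} that a degree $k$ graded symplectic structure is encoded by a closed, non-degenerate two-form $\omega$ of degree $k$, from which $\{f,g\}$ is recovered in the usual way; non-degeneracy means exactly that contraction $X\mapsto \iota_X\omega$ is a degree-shifting isomorphism from vector fields to one-forms. Under this dictionary, $Q=\{\theta,\cdot\}$ asserts precisely that $Q$ is the Hamiltonian vector field $X_\theta$, characterised by $\iota_{X_\theta}\omega = d\theta$ (up to the usual sign), where $d$ denotes the de Rham differential of $\cM$. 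So the proposition amounts to producing, for a Poisson vector field $Q$ of degree $l$, a function $\theta$ of degree $k+l$ with $\iota_Q\omega = d\theta$.

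First I would translate the Poisson condition \eqref{Q-poisson} into the statement $\cL_Q\omega = 0$: a vector field preserves the bracket exactly when it preserves the symplectic form. Combining this with $d\omega = 0$ through the graded Cartan formula $\cL_Q = d\iota_Q \pm \iota_Q d$ shows that the one-form $\alpha := \iota_Q\omega$ is closed, $d\alpha = 0$ (the $\iota_Q d$ term dies against $d\omega=0$, so the sign is irrelevant here). By non-degeneracy of $\omega$ it then suffices to exhibit a primitive $\theta$ with $d\theta = \alpha$, since the resulting $\theta$ automatically satisfies $Q = X_\theta = \{\theta,\cdot\}$. Note that $\alpha$ is homogeneous of degree $k+l$, because $\omega$ has degree $k$ and $\iota_Q$ raises internal degree by $l$.

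The decisive step is to produce this primitive using the Euler vector field $\epsilon$, the degree-$0$ field acting on a homogeneous function $f$ by $\epsilon(f)=|f|\,f$ and hence on homogeneous forms by $\cL_\epsilon\beta = (\deg\beta)\,\beta$. Applying $\cL_\epsilon = d\iota_\epsilon + \iota_\epsilon d$ to the closed form $\alpha$ gives $(k+l)\,\alpha = \cL_\epsilon\alpha = d\,\iota_\epsilon\alpha$. The hypothesis $l>-k$ is exactly what guarantees $k+l\neq 0$, so I may divide and set
\[
\theta \;=\; \frac{1}{k+l}\,\iota_\epsilon\iota_Q\omega \;\in\; C^{k+l}_\cM,
\]
a globally defined function with $d\theta=\alpha=\iota_Q\omega$, whence $Q=\{\theta,\cdot\}$. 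I expect the only real work to be bookkeeping: setting up the graded de Rham and Cartan calculus, verifying the Euler identity $\cL_\epsilon\beta=(\deg\beta)\,\beta$ and the degree count giving $\deg\alpha=k+l$, and checking that $\iota_\epsilon\iota_Q\omega$ indeed lands in $C^{k+l}_\cM$. Conceptually there is no cohomological obstacle, since the Euler vector field furnishes a global primitive directly; everything reduces to the single nonvanishing scalar $k+l$, which is precisely where the condition $l>-k$ enters.
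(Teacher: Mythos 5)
Your proof is correct, and it is essentially the argument the paper implicitly relies on: the paper gives no proof of Proposition~\ref{sym=ham}, deferring to \cite{roy:on}, and your Euler-vector-field construction $\theta=\tfrac{1}{k+l}\iota_\epsilon\iota_Q\omega$ (via $\cL_Q\omega=0$, Cartan's formula, and $\cL_\epsilon\alpha=(k+l)\alpha$) is exactly Roytenberg's proof there. The only cosmetic remark is that your argument actually needs only $k+l\neq 0$, so the stated hypothesis $l>-k$ is slightly stronger than what you use.
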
 

	Finally, let $(\cM,\{\cdot,\cdot\})$ be a symplectic manifold and $\cN\subseteq \cM$ a submanifold with associated ideal $\cI$. We say that $\cN$ is \emph{coisotropic} if
	\begin{equation}\label{coieq}
		\{\cI,\cI\}\subseteq \cI.
	\end{equation}
	Observe that our definition of symplectic manifold allows examples where the $totdim$ is not even, but if it is, we say that $\cN$ is \emph{lagrangian} if it is coisotropic and $totdim \cN= \frac{1}{2}totdim \cM$.

\section{Graded Cotangent bundles}\label{S3}

With the basics of graded manifolds we can now introduce graded cotangent bundles. Consider a vector bundle $A\to M$. As we already saw we can define a degree $1$-manifold by $A[1]=(M, \Gamma\bigwedge^\bullet A^*)$. Just as in usual geometry, any graded manifold has associated tangent and cotangent bundles that are vector bundles over it. But now, vector bundles over graded manifolds can be shifted by an arbitrary degree, and in this way we define the graded manifolds $T^*[k]A[1], \ \forall k\in\bN^*$.

The case $k=1$ is a bit special, because new degree $0$ coordinates are introduced. In this case we take a completion of the polynomial coordinates of degree $0$ and obtain a degree $1$ manifold isomorphic to $T^*[1]A^*$.

For $k\geq 2$, when no new degree $0$ coordinates are added, $T^*[k]A[1]=(M,C_{T^*[k]A[1]})$ is a degree $k$ manifold with 
\begin{equation}\label{functions}
	C_{T^*[k]A[1]}= Sym\left( \fX^{\bullet+k}(A[1])\right),
\end{equation}
 where we consider the grading by total degree and $Sym$ denotes the graded symmetric product with $Sym^{0}(\fX^{\bullet+k}(A[1]))=C^\bullet_{A[1]}$ without the shifting by $k$.

In local coordinates $T^*[k]A[1]$ has the following expression: Take $U$ a trivialization for the bundle $A\to M$ and consider local coordinates $x^i$ on $U$ and $\alpha^j$ fibre coordinates for $A_{|U}$ (that we identify conveniently with sections of $A^*$), and denote by $a^j$ the dual coordinates for $A^*_{|U}$. The set \begin{equation*}
\{x^i, \alpha^j, a_j=\frac{\partial}{\partial \alpha^j}, \frac{\partial}{\partial x^i}\} \text{ with }  |x^i|=0, |\alpha^j|=1, |a_j|=k-1 \text{ and } |\frac{\partial}{\partial x^i}|=k
\end{equation*} are local coordinates for $T^*[k]A[1]$.

\subsection{Classical description of the manifolds $T^*[k]A[1]$ and their submanifolds}

Although we have a nice definition of the manifolds $T^*[k]A[1]$ in terms of sheaves over $M$ we also want to characterize them in terms of algebra bundles, because this point of view allows us to work with classical vector bundles.

\begin{proposition}\label{VBchar}
	Let $A\to M$ be a vector bundle. Then:
	\begin{itemize}
		\item For $k=1: \ T^*[1]A[1]=T^*[1]A^*=(A^*, \fX^\bullet(A^*))$. So $C_{T^*[1]A[1]}$ is generated by $C^\infty(A^*)$ and $\Gamma TA^*$.
		\item For $k=2:$ the algebra bundle corresponding to $T^*[2]A[1]$ is $(E=\oplus_{i=1}^2 E_i\to M, m)$, where
		\begin{equation}\label{VBchareq2}
		E_i=\left\{\begin{array}{ll}
		A\oplus A^* & i=1,\\
		\bA_{(A\oplus A^*,\langle,\rangle)}& i=2.
		
		\end{array}
		\right.\quad \begin{array}{l}
		m:\Gamma E_1\otimes \Gamma E_1\to \Gamma E_2 \\
		m(e_1, e_2)=e_1\wedge e_2
		\end{array}
		\end{equation}
		where $\bA_{(A\oplus A^*,\langle,\rangle)}$ denotes the Atiyah algebroid of derivations that preserve the natural pairing on $A\oplus A^*$ and $e_1\wedge e_2$ is seen as an orthogonal endomorphism of $A\oplus A^*$. 
		Therefore the generators of $C_{T^*[2]A[1]}$ are $C^0_{T^*[2]A[1]}=C^\infty(M)$ and $C^i _{T^*[2]A[1]}=\Gamma E_i$ for $i=1,2$.
		\item For $k> 2:$ the algebra bundle corresponding to $T^*[k]A[1]$ is $(E=\oplus_{i=1}^k E_i\to M, m)$, where
		\begin{equation}\label{VBchareq}
		E_i=\left\{\begin{array}{lll}
		\bigwedge\nolimits^i A^* & 1\leq i< k-1,&m:\Gamma E_i\otimes \Gamma E_j\to \Gamma E_{i+j}\\
		A\oplus\bigwedge\nolimits^{k-1} A^*& i=k-1,&m(\alpha_i, \alpha_j)=\alpha_i\wedge \alpha_j, \ i+j\leq k \text{ and } j\neq k-1\\
		\bA_{A}\oplus\bigwedge\nolimits^kA^*& i=k.&m( \alpha_1, a+\alpha_{k-1})=\alpha_1\otimes a+\alpha_1\wedge\alpha_{k-1}
		
		\end{array}
		\right.
		\end{equation}
		where $\alpha_1\otimes a$ is seen as an endomorphism inside $\bA_{A}$. 
		Therefore the generators of $C_{T^*[k]A[1]}$ are $C^0_{T^*[k]A[1]}=C^\infty(M)$ and $C^i _{T^*[k]A[1]}=\Gamma E_i$ for $i=1,\cdots,k$.
	\end{itemize}	 
\end{proposition}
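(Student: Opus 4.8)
The plan is to read off the graded pieces of $C_{T^*[k]A[1]}$ directly from the defining identity \eqref{functions}, $C_{T^*[k]A[1]}=Sym(\fX^{\bullet+k}(A[1]))$, and then to recognise each piece as the sections of a classical bundle. By Proposition~\ref{vfdeg1} the module $\fX^\bullet(A[1])$ is generated over $C_{A[1]}=\Gamma\bigwedge^\bullet A^*$ by $\fX^{-1}(A[1])\simeq\Gamma A$ and $\fX^0(A[1])\simeq\Gamma\bA_A$. Working in a trivialisation, the fibre coordinates of $T^*[k]A[1]$ are the momenta $a_j=\partial/\partial\alpha^j$ (degree $k-1$) and $p_i=\partial/\partial x^i$ (degree $k$), while $\alpha^j\in\Gamma A^*$ has degree $1$; hence a monomial with $m$ factors $\alpha$, $p$ factors $a$ and $q$ factors $p$ has total degree $m+p(k-1)+qk$, and I would enumerate the solutions of $m+p(k-1)+qk=i$ degree by degree.

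For the generic range $k>2$ this count is immediate. When $1\le i<k-1$ only $(m,p,q)=(i,0,0)$ occurs, so $C^i=\Gamma\bigwedge^i A^*$. In degree $k-1$ the solutions $(k-1,0,0)$ and $(0,1,0)$ give $\bigwedge^{k-1}A^*$ and $\Gamma A$, whence $E_{k-1}=A\oplus\bigwedge^{k-1}A^*$. In degree $k$ the families $(k,0,0)$, $(1,1,0)$ and $(0,0,1)$ give, respectively, $\bigwedge^k A^*$, the endomorphism terms $\alpha^i a_j$, and the momenta $\partial/\partial x^i$. The multiplication is read off from products of generators: two forms multiply to their wedge (the $Sym^0$ product in $C_{A[1]}$), while the product of a $1$-form $\alpha_1$ with a generator $a\in\Gamma A\subseteq\Gamma E_{k-1}$, realised as $\partial/\partial\alpha$, is the degree-$0$ vector field $\alpha_1\,a$, i.e. the endomorphism $\alpha_1\otimes a\in\End(A)$; together with $\alpha_1\wedge\alpha_{k-1}$ this gives the stated $m(\alpha_1,a+\alpha_{k-1})$.

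The one step that is not a monomial count is the global identification in degree $k$. The families $(1,1,0)$ and $(0,0,1)$ span, respectively, the endomorphisms $\alpha^i a_j$ and the momenta $\partial/\partial x^i$, and neither is natural by itself; but Proposition~\ref{vfdeg1} identifies the whole degree-$0$ part $\fX^0(A[1])$ with $\Gamma\bA_A$, sitting in the Atiyah extension $0\to\End(A)\to\bA_A\to TM\to 0$, and this is exactly what glues the two families into $E_k=\bA_A\oplus\bigwedge^k A^*$. I expect this to be the main obstacle: recognising that the Atiyah algebroid is forced by Proposition~\ref{vfdeg1}, rather than assembling $\End(A)$ and $TM$ by hand.

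The low cases need separate attention. For $k=2$ the same enumeration of $m+p+2q=i$ gives $E_1=A\oplus A^*$ in degree $1$, and in degree $2$ four families $\bigwedge^2 A^*,\ \bigwedge^2 A,\ \End(A),\ TM$; here one invokes the decomposition $\mathfrak{so}(A\oplus A^*)=\End(A)\oplus\bigwedge^2 A\oplus\bigwedge^2 A^*$ of the pairing-preserving endomorphisms, so that the four families assemble into the orthogonal Atiyah algebroid $\bA_{(A\oplus A^*,\langle,\rangle)}$, and the product $m(e_1,e_2)=e_1\wedge e_2$ of two degree-$1$ elements is read as the skew endomorphism $v\mapsto\langle e_1,v\rangle e_2-\langle e_2,v\rangle e_1$. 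Finally $k=1$ is genuinely different, since $a_j$ now carries degree $k-1=0$: these new degree-$0$ fibre coordinates enlarge the body from $M$ to $A^*$, and after completing the polynomial degree-$0$ functions one recognises $T^*[1]A^*$, whose algebra is generated by $C^\infty(A^*)$ and $\Gamma TA^*$.
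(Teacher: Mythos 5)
Your proposal is correct and takes essentially the same route as the paper: both read off $C_{T^*[k]A[1]}=Sym\left(\fX^{\bullet+k}(A[1])\right)$ degree by degree from \eqref{functions} and use Proposition~\ref{vfdeg1} to identify $\fX^{-1}(A[1])\simeq\Gamma A$ and $\fX^{0}(A[1])\simeq\Gamma\bA_A$, the latter being exactly what glues the $\End(A)$ and $TM$ pieces into the Atiyah algebroid (and, for $k=2$, assembles the four families into $\bA_{(A\oplus A^*,\langle,\rangle)}$). Your local monomial enumeration is simply a more explicit rendering of the paper's direct decomposition of each graded component, so there is no substantive difference in method.
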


\begin{proof}
	 For the case $k=1$ we already mentioned that we consider a completion of multi-vector fields over $A[1]$ shifted by $1$, so $T^*[1]A[1]$ is isomorphic to $T^*[1]A^*$.
	 
	 For $k=2$ equation \eqref{functions} combined with the geometric characterization of vector fields given by Proposition \ref{vfdeg1} tell us that:
	 \begin{itemize}
	 	\item $C^0_{T^*[2]A[1]}= C_{A[1]}^0 = C^\infty(M)$.
	 	\item $C^1_{T^*[2]A[1]}= C_{A[1]}^1\oplus \fX^{-1}(A[1])=\Gamma A^*\oplus \Gamma A=\Gamma E_1$.
	 	\item $C_{T^*[2]A[1]}^2 = C^2_{A[1]}\oplus \fX^{0}(A[1])\oplus \bigwedge^2 \fX^{-1}(A[1])=\Gamma(\bigwedge^2 A^*\oplus \bA_A\oplus \bigwedge^2 A)=\Gamma\bA_{(A						\oplus A^*,\langle,\rangle)}=\Gamma E_2.$
	 \end{itemize}
	  
	 The case $k> 2$ is analogous: equation \eqref{functions} and Proposition \ref{vfdeg1} tell us that
	 \begin{itemize}
	 	\item $C^0_{T^*[k]A[1]}= C_{A[1]}^0 = C^\infty(M)$.
	 	\item For $1\leq i< k-1,\quad C^i_{T^*[k]A[1]}= C_{A[1]}^i=\Gamma \bigwedge ^i A^*=\Gamma E_i$.
	 	\item $C_{T^*[k]A[1]}^{k-1} = \fX^{-1}(A[1]) \oplus C^{k-1}_{A[1]}=\Gamma(A\oplus \bigwedge^{k-1} A^*)=\Gamma E_{k-1}.$
	 	\item $C_{T^*[K]A[1]}^k= \fX^{-0}(A[1])\oplus C^k_{A[1]}=\Gamma(\bA_A\oplus \bigwedge^kA^*)=\Gamma E_k.$
	 \end{itemize}
	 and the algebra structure of $C_{T^*[k]A[1]}$ given by the multiplication of functions clearly gives the map $m$.
\end{proof}

\begin{remark}
Observe that \eqref{VBchareq2} is an admissible algebra bundle because if we define the graded vector bundle $\mathbf{A}=A_1\oplus A_2$, where $A_1=A\oplus A^*$, and $A_2=T^*M$, any connection on $A\oplus A^*$ that preserves the pairing induces an algebra isomorphism between \eqref{VBchareq2} and $((Sym \mathbf{A})^{\leq 2}, \cdot)$. Also \eqref{VBchareq} is an admissible algebra bundle because in this case any connection on $A$ induces an isomorphism between \eqref{VBchareq} and $((Sym \mathbf{A})^{\leq k}, \cdot)$ where \begin{equation*}
\mathbf{A}=\bigoplus_i A_i=\left\{\begin{array}{ll}
 A^*& i=1,\\
 A& i=k-1,\\
TM& i=k,\\
0& \text{otherwise}.
\end{array}\right.
\end{equation*}	
\end{remark}

From now on we focus on the case $k>2$. By the previous proposition, the case $k=1$ is just a vector bundle and it is well known what happens there. The case $k=2$ was described by Roytenberg in \cite{roy:on} and  corresponds to the standard structures on $A\oplus A^*$. Our results should be understood as a generalization of this case.

The following step is to obtain a classical description for the submanifolds of $T^*[k]A[1]$. For this we use Proposition \ref{sub=ideals} and obtain a characterization of the submanifolds in terms of the algebra bundles that we introduced in the previous proposition.

\begin{proposition}\label{cotsub}
For $k>2$ there is a 1-1 correspondence between:
\begin{itemize}
	\item Submanifolds of $T^*[k]A[1]$
	\item Quadruples $(N, D, L, K)$ where  $N\subseteq M$ is a submanifold and $D, L$ and $K$ are three vector bundles over $N$ satisfying
	\begin{eqnarray}
	&D\subseteq A^*_{|N},\  \ L\subseteq \left(A\oplus \bigwedge\nolimits^{k-1}A^*\right)_{|N} \text{ and } K\subseteq \left(\bA_A\oplus \bigwedge\nolimits^k A^*\right)_{|N},\label{Sub0}\\
		&L\cap\bigwedge\nolimits^{k-1}A^*_{|N}= D\wedge\bigwedge\nolimits^{k-2}A^*_{|N},\label{Sub1}\\
		&K\cap(\End(A)\oplus\bigwedge\nolimits^k A^*)_{|N}=D\otimes A_{|N}\oplus L\wedge A^*_{|N}.\label{Sub2}
	\end{eqnarray}
\end{itemize}
\end{proposition}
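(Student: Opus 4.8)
The plan is to invoke Proposition~\ref{sub=ideals}, which reduces the problem to classifying graded subbundles $F=\oplus_{i=1}^k F_i\subseteq E$ over submanifolds $N\subseteq M$ satisfying the compatibility condition \eqref{submanifoldeq} in every degree $\ell$, where $(E,m)$ is the algebra bundle of $T^*[k]A[1]$ described in Proposition~\ref{VBchar}. The first step is to read off the three distinguished components directly from the degrees where $E$ acquires ``new'' generators: since $E_1=A^*$, $E_{k-1}=A\oplus\bigwedge^{k-1}A^*$ and $E_k=\bA_A\oplus\bigwedge^k A^*$, I set $D:=F_1$, $L:=F_{k-1}$ and $K:=F_k$, which are subbundles over $N$ satisfying exactly the inclusions \eqref{Sub0}. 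The content of the proposition is then that the remaining data $F_\ell$ ($2\le\ell\le k-2$) is forced, and that \eqref{submanifoldeq} in the two top degrees is equivalent to \eqref{Sub1} and \eqref{Sub2}.

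Next I would determine the intermediate bundles. For $2\le\ell\le k-2$ every product $m(E_i,E_j)$ with $i+j=\ell$ is an ordinary wedge, so $\bigoplus_{i+j=\ell}m(E_i,E_j)=\bigwedge^\ell A^*=E_\ell$ and \eqref{submanifoldeq} degenerates to $F_\ell=\bigoplus_{i+j=\ell}m(F_i,E_j)$. Proceeding inductively from $\ell=2$ and using $F_1=D$, each such identity collapses to
\begin{equation*}
F_\ell=D\wedge\bigwedge\nolimits^{\ell-1}A^*_{|N},
\end{equation*}
since the contributions $m(F_i,E_j)=D\wedge\bigwedge^{i-1}A^*\wedge\bigwedge^{j}A^*$ all coincide with $D\wedge\bigwedge^{\ell-1}A^*$. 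Thus the intermediate bundles carry no extra information and are recovered from $D$; conversely, given a quadruple $(N,D,L,K)$ one defines $F$ by these formulas together with $F_1=D$, $F_{k-1}=L$, $F_k=K$, and the low-degree instances of \eqref{submanifoldeq} then hold automatically.

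It remains to match \eqref{submanifoldeq} in degrees $\ell=k-1$ and $\ell=k$ with \eqref{Sub1} and \eqref{Sub2}. In degree $k-1$ the products $m(E_i,E_j)$ reach only $\bigwedge^{k-1}A^*$ (the summand $A$ of $E_{k-1}$ is a new generator), so the left-hand side of \eqref{submanifoldeq} is $L\cap\bigwedge^{k-1}A^*_{|N}$, while the right-hand side computes, exactly as above, to $D\wedge\bigwedge^{k-2}A^*_{|N}$; this is precisely \eqref{Sub1}. The key computation, and the main obstacle, is degree $k$. Here one must first identify $\bigoplus_{i+j=k}m(E_i,E_j)$: the wedge products fill $\bigwedge^k A^*$, while $m(E_1,E_{k-1})=m(A^*,A\oplus\bigwedge^{k-1}A^*)$ contributes all of $A^*\otimes A=\End(A)$ together with $\bigwedge^k A^*$, whereas the anchor (symbol) part of $\bA_A$ is never reached; hence this space equals $(\End(A)\oplus\bigwedge^k A^*)_{|N}$, which explains the intersection appearing on the left of \eqref{Sub2}.

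The right-hand side $\bigoplus_{i+j=k}m(F_i,E_j)$ must then be unwound using the already-determined $F_\ell$: the term $m(D,E_{k-1})$ yields $D\otimes A_{|N}$ in $\End(A)$ together with $D\wedge\bigwedge^{k-1}A^*$ in $\bigwedge^k A^*$, the term $m(L,A^*_{|N})$ yields the remaining mixed contributions $L\wedge A^*_{|N}$, and all other products are absorbed into these. The delicate point is the careful bookkeeping of the $\End(A)$- and $\bigwedge^k A^*$-components of $m(D,E_{k-1})$ and $m(L,A^*)$, after which the identity reduces to \eqref{Sub2}. Finally I would check that the assignment $F\mapsto(N,D,L,K)$ and its inverse are mutually inverse and respect all the subbundle conditions, giving the claimed bijection.
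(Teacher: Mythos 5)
Your proposal is correct and takes essentially the same route as the paper's own proof: both invoke Proposition~\ref{sub=ideals} and then read off condition \eqref{submanifoldeq} degree by degree, finding that $F_1=D$, that the intermediate bundles are forced to be $F_\ell=D\wedge\bigwedge^{\ell-1}A^*_{|N}$, and that the conditions in degrees $k-1$ and $k$ are precisely \eqref{Sub1} and \eqref{Sub2}. In fact your write-up supplies more detail than the paper does (identifying $\bigoplus_{i+j=k}m(E_i,E_j)=\left(\End(A)\oplus\bigwedge^k A^*\right)_{|N}$ and noting that the stray term $D\wedge\bigwedge^{k-1}A^*_{|N}$ is absorbed into $L\wedge A^*_{|N}$ via \eqref{Sub1}).
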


\begin{proof}
	By Proposition \ref{sub=ideals} we have that submanifolds are the same as a submanifold $N\subseteq M$ and  $F=\oplus_{i=1}^k F_i\to N$ vector subundle of \eqref{VBchareq} satisfying equation \eqref{submanifoldeq}. In our case this means that:
	
	\begin{enumerate}
		\item For $i=1:\  F_1=D\subseteq A^*_{|N}$ is vector subundle.
		\item  For $1< i< k-1: \ F_i= F_i\cap \wedge^{i} A^*_{|N}= D\cap \wedge^{i-1}A^*_{|N}$, so they are completely determined.
		\item For $i=k-1:\ F_{k-1}=L\subseteq( A\oplus \bigwedge^{k-1}A^*)_{|N}$ and equation \eqref{submanifoldeq} becomes \eqref{Sub1}.
		\item For $i=k: \ F_{k}=K\subseteq (\bA_A\oplus \bigwedge^{k}A^*)_{|N}$ and equation \eqref{submanifoldeq} becomes \eqref{Sub2}.
	\end{enumerate}
\end{proof}

\begin{remark}
 Denote by $p_1:A\oplus \bigwedge^{k-1}A^*\to A$ the projection. One of the consequences of equation \eqref{Sub1} is that $p_1(L)\subseteq A$ defines a vector subundle. This is not the case for $k=2$, and this property makes the submanifolds here much more rigid, as we will see.
\end{remark}

Given a submanifold $\cN\subseteq T^*[k]A[1]$ equivalent to $(N,D,L,K)$ denote  by $\widehat{F}$ the image of the projection of $K$ onto $TM$, that is a regular distribution. Then
\begin{equation}\label{totdimsub}
totdim \cN=\dim N+rk(A^*)-rk(D)+rk(A)-rk(p_1(L))+dim M-rk(\widehat{F}).
\end{equation}

Recall that given a graded manifold $\cM=(M, C_\cM)$, submanifolds are defined as sheaves of regular homogeneous ideals $\cI\subseteq C_\cM$. A direct consequence of the preceding proposition is that given a quadruple $(N, D, L, K)$ satisfying the hypothesis of Proposition \ref{cotsub}, the ideal $\cI\subseteq C_{T^*[k]A[1]}$ associated to the submanifold can be expressed in each degree as: 
\begin{equation*}
\begin{array}{c|ccccccc}\label{idealsub}
& 0& 1 & 2& & k-2&k-1&k\\
\hline
C_{T^*[k]A[1]}& C^\infty(M)&A^* &\bigwedge^2 A^*& \cdots &\bigwedge^{k-2}A^* &A\oplus\bigwedge^{k-1}A^*&\bA_A\oplus\bigwedge^k A^*\\
\cI&Z(N)&\widehat{\Gamma}D&\widehat{\Gamma}(D\wedge A^*_{|N})&&\widehat{\Gamma}(D\wedge \bigwedge^{k-3}A^*_{|N})&\widehat{\Gamma}L&\widehat{\Gamma}K\\
\end{array}
\end{equation*}
where $Z(N)=\{ f\in C^\infty(M) \ | \ f(n)=0 \ \forall n\in N\}$ and for any subbundle $(H\to N)\subseteq (G\to M)$ we denote
\begin{equation*}
\widehat{\Gamma}(H)=\{s\in\Gamma G\ | \ s(n)\in H \ \forall n\in N\}.
\end{equation*}

\subsection{The symplectic structure}
The other aspect of the graded cotangent bundles is that they are graded symplectic manifolds. As mentioned in the introduction the symplectic structure of $T^*[k]A[1]$ will be responsible for the pairing on $A\oplus\bigwedge^{k-1}A^*$, as it happens when $k=2$.

	The manifolds $T^*[k]A[1]$ are symplectic with a bracket of degree $-k$ on functions given by the graded version of the Schouten bracket of multi-vector fields for the manifold $A[1]$. By \eqref{functions} the functions on $T^*[k]A[1]$ are graded symmetric powers of vector fields on $A[1]$. We define the graded version of the Schouten bracket by the following rules:
	\begin{equation}\label{symbr}
		\{ X, f\}=X(f)\quad \text{and}\quad\{X,Y\}=[X,Y] \text{ for } f\in C_{A[1]}, \ X,Y\in\fX(A[1])
	\end{equation}
	extended by graded skew symmetry and as a derivation with respect to the graded symmetric product. With this definition, the Schouten bracket satisfies the graded analogues of Leibniz and Jacobi identities. If we shift the vector field coordinates by $k$ then it is clear that it defines a Poisson bracket of degree $-k$. It remains to see that it is symplectic. Notice that $\forall p\in M, \ \exists\ U$ open subset around $p$ such that $A_{|U}$ is trivial and $\{x^i, \alpha^j, a_j=\frac{\partial}{\partial \alpha^j}, \frac{\partial}{\partial x^i}\}$ defines local coordinates for $T^*[k]A[1]$. It is clear that this is a Darboux chart.

\begin{proposition}
For $k> 2$, the manifold $T^*[k]A[1]$ is equivalent to $(E,m)$ given in \eqref{VBchareq}. The Poisson bracket on $T^*[k]A[1]$ is determined by the following operations on $E=\oplus E_i\to M$:\begin{equation*}
	\begin{array}{ll}
		\langle\cdot,\cdot\rangle:\Gamma E_{k-1}\times\Gamma E_1\to C^\infty(M),&\langle a+\omega, \alpha\rangle=\alpha(a),\\
		\langle\cdot,\cdot\rangle:\Gamma E_{k-1}\times\Gamma E_{k-1}\to \Gamma E_{k-2},&\langle a_1+\omega_1, a_2+\omega_2\rangle=i_{a_1}\omega_2+i_{a_2}\omega_1,\\
		\cdot:\Gamma E_k \times C^\infty(M)\to C^\infty(M),&(D,\tau)\cdot f= X(f),\\
		\Psi:\Gamma E_{k}\times \Gamma E_1\to \Gamma E_{1},&\Psi_{(D,\tau)}\alpha=D(\alpha),\\
	\Upsilon:\Gamma E_{k}\times \Gamma E_{k-1}\to \Gamma E_{k-1},&\Upsilon_{(D,\tau)}(a+\omega)=D(a+\omega)-i_a\tau,	\\
	\left[\cdot,\cdot\right]:\Gamma E_{k}\times \Gamma E_{k}\to\Gamma E_{k},&
	\left[(D_1,\tau_1),(D_2,\tau_2)\right]=(\left[D_1,D_2\right],D_1(\tau_2)-D_2(\tau_1)).
	\end{array}
\end{equation*}
\end{proposition}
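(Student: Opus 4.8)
The plan is to reduce the whole statement to the defining rules \eqref{symbr} of the Schouten bracket evaluated on algebra generators, and then read off the six listed operations as the compiled values of the bracket on the summands $\Gamma E_i$. First I would fix the dictionary provided by Propositions~\ref{VBchar} and~\ref{vfdeg1}: a section $a\in\Gamma A$ is the contraction $i_a\in\fX^{-1}(A[1])$, sitting in degree $k-1$ after the shift by $k$; an Atiyah section $(D,\sigma)\in\Gamma\bA_A$ is the degree-$0$ derivation $\cL_D$ of $\Gamma\bigwedge^\bullet A^*$, sitting in degree $k$; and the form components $\alpha\in\Gamma A^*$, $\omega\in\Gamma\bigwedge^{k-1}A^*$, $\tau\in\Gamma\bigwedge^k A^*$ are genuine functions in $C^\bullet_{A[1]}=Sym^0$. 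Since the algebra $C_{T^*[k]A[1]}$ is generated by $C^\infty(M)$, $\Gamma A^*$, the contractions $i_a$ and the derivations $\cL_D$, and since $\{\cdot,\cdot\}$ is a biderivation, it is enough to compute the bracket on pairs of these generators; the appearances of $\omega$ and $\tau$ in the formulas for $\Gamma E_{k-1}$ and $\Gamma E_k$ then follow either directly from $\{X,f\}=X(f)$, treating $\omega$ and $\tau$ as functions on $A[1]$, or from the Leibniz rule.

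Next I would compute the generator brackets. The base functions $x^i,\alpha^j$ pairwise Poisson-commute in the Darboux chart $\{x^i,\alpha^j,a_j,\partial/\partial x^i\}$, so $\{f,g\}=0$ for all $f,g\in C_{A[1]}$; this disposes of every purely form-theoretic pair. The mixed pairs are read off from \eqref{symbr}: $\{i_a,\alpha\}=i_a(\alpha)=\alpha(a)$ yields the first pairing; $\{i_{a_1},\omega_2\}=i_{a_1}\omega_2$ together with its skew partner yields the $E_{k-1}$-pairing; $\{\cL_D,f\}=\sigma(f)$ yields the third line (with $X=\sigma$ the symbol); $\{\cL_D,\alpha\}=D(\alpha)$ yields $\Psi$; $\{\cL_D,\omega\}=D(\omega)$ and $\{\cL_D,\tau\}=D(\tau)$ feed $\Upsilon$ and the $E_k$-bracket; and $\{\cL_{D_1},\cL_{D_2}\}=\cL_{[D_1,D_2]}$ yields the Atiyah bracket in the last line. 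The only pair that needs more than a bare application of \eqref{symbr} is $\{\cL_D,i_a\}=[\cL_D,i_a]$, and here I would establish the Cartan-type identity $[\cL_D,i_a]=i_{D(a)}$ by evaluating both sides on an arbitrary $\alpha\in\Gamma A^*$ and using that $\cL_D$ acts on $A^*$ as the derivation dual to $D$; the two terms $\sigma\langle\alpha,a\rangle$ then cancel and leave $\langle\alpha,D(a)\rangle$. This supplies the $D(a)$ part of $\Upsilon_{(D,\tau)}(a+\omega)$.

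The remaining difficulty, and the only place I expect real friction, is sign bookkeeping, because the Poisson structure has degree $-k$ and so the graded skew-symmetry axiom must be applied with $k$ replaced by $-k$, namely $\{f,g\}=-(-1)^{(|f|-k)(|g|-k)}\{g,f\}$. I would use it to pin down the three signs that are not manifestly symmetric: the $+i_{a_2}\omega_1$ in the $E_{k-1}$-pairing arises because $(|\omega_1|-k)(|i_{a_2}|-k)=(-1)(-1)$ is odd, turning the overall minus into a plus; the $-i_a\tau$ in $\Upsilon$ arises from $(|\tau|-k)(|i_a|-k)=0$, leaving the bare minus of skew-symmetry; and the $-D_2(\tau_1)$ in the $E_k$-bracket likewise from $(|\tau_1|-k)(|\cL_{D_2}|-k)=0$. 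As a final consistency check I would record the degree of each output, for instance $(k-1)+1-k=0$ for the first pairing, $(k-1)+(k-1)-k=k-2$ for the second, $k+(k-1)-k=k-1$ for $\Upsilon$ and $k+k-k=k$ for the $E_k$-bracket, confirming in each case that the operation lands in the claimed bundle.
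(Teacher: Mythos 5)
Your proposal is correct and follows exactly the paper's (very terse) proof: reduce everything to the defining rules \eqref{symbr} of the shifted Schouten bracket on generators, using the identifications of Propositions \ref{VBchar} and \ref{vfdeg1}, and track how vector fields on $A[1]$ act on functions and on each other. You simply make explicit what the paper leaves implicit — the Cartan identity $[\cL_D,i_a]=i_{D(a)}$, the vanishing brackets among functions (and, implicitly needed, among contractions), and the sign bookkeeping for the degree $-k$ skew-symmetry — all of which you handle correctly.
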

\begin{proof}
	Use that the Poisson bracket is defined by equations \eqref{symbr} and recall how vector fields act on functions and on other vector fields.
\end{proof}

Having described the symplectic structure, we now apply Proposition \ref{cotsub} to coisotropic and lagrangian submanifolds. We have special interest in describing the lagrangian ones because for $T^*[2]A[1]$ they are in correspondence with almost Dirac structures of $(A\oplus A^*, \langle\cdot,\cdot\rangle)$. In the degree $2$ case the coisotropic and lagrangian submanifolds of an arbitrary degree $2$ symplectic manifold where described in \cite{bur:super}. Our results must be compared with that ones.

\begin{theorem}[Coisotropic submanifolds]\label{coisochar}
For $k>2$, there is a 1-1 correspondence between:
	\begin{itemize}
		\item Coisotropic submanifolds of $T^*[k]A[1]$.
		\item Data $(N, D, L,\widehat{F}, \nabla)$, where:
			\begin{enumerate}
				\item $N\subseteq M$ is a submanifold.
				\item $D$ and $L$ are vector bundles over $N$ satisfying $D\subseteq A^*_{|N}$ and $L\subseteq (A\oplus \bigwedge^{k-1}A^*)_{|N}$ such that condition \eqref{Sub1} follows, $D\subseteq p_1(L)^\circ$ and $\langle L, L\rangle\subseteq D\wedge\bigwedge^{k-3} A^*_{|N}$.
				\item $\widehat{F}\subseteq TN$ is a regular and involutive distribution.
				\item $\nabla$ is flat partial $\widehat{F}$-connection in the vector bundle $\frac{p_1(L)^\circ}{D}$.
			\end{enumerate}
	\end{itemize}
\end{theorem}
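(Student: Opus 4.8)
The strategy is to unwind the coisotropic condition $\{\cI,\cI\}\subseteq\cI$ degree by degree, using the quadruple $(N,D,L,K)$ attached to the submanifold by Proposition \ref{cotsub} together with the explicit description of $\cI$ given after it, and then to recast the top bundle $K$ in terms of $(\widehat F,\nabla)$. Since the Poisson bracket has degree $-k$ we have $\{\cI^i,\cI^j\}\subseteq\cI^{i+j-k}$, so only pairs with $i+j\geq k$ are constraining; moreover, by \eqref{symbr} a bracket vanishes unless one argument carries a vector-field component, i.e. lies in the $A$-summand of $\cI^{k-1}=\widehat\Gamma L$ or the $\bA_A$-summand of $\cI^k=\widehat\Gamma K$. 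It therefore suffices to run through the finitely many operations recorded in the previous proposition, and the $C^\infty(M)$-bilinear (purely algebraic) ones will yield item (2) while the genuinely differential ones will yield items (3)--(4).

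The brackets not involving $\cI^k$ are algebraic and give item (2). The pairing $\langle a+\omega,\alpha\rangle=\alpha(a)$ gives $\{\cI^{k-1},\cI^1\}\subseteq\cI^0=Z(N)$, forcing $\alpha(a)|_N=0$ for $\alpha\in D$, $a\in p_1(L)$, i.e. $D\subseteq p_1(L)^\circ$. The pairing $\langle a_1+\omega_1,a_2+\omega_2\rangle=i_{a_1}\omega_2+i_{a_2}\omega_1$ gives $\{\cI^{k-1},\cI^{k-1}\}\subseteq\cI^{k-2}$, i.e. $\langle L,L\rangle\subseteq D\wedge\bigwedge^{k-3}A^*_{|N}$. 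The remaining mixed brackets $\{\cI^{k-1},\cI^{j}\}\subseteq\cI^{j-1}$ for $2\leq j\leq k-2$ reduce, via $i_a(\alpha\wedge\eta)=\alpha(a)\eta-\alpha\wedge i_a\eta$ and $D\subseteq p_1(L)^\circ$, to $-\alpha\wedge i_a\eta\in D\wedge\bigwedge^{j-2}A^*_{|N}$ and so impose nothing new. Together with \eqref{Sub1}, already built into Proposition \ref{cotsub}, this is exactly item (2).

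Next I analyse the brackets with $\cI^k$. The action $(D,\tau)\cdot f=\sigma(f)$ gives $\{\cI^k,\cI^0\}\subseteq\cI^0$, forcing the symbol of every element of $K$ to be tangent to $N$; hence the image $\widehat F$ of $K$ under the symbol map is a subbundle of $TN$, regular because $K$ and its intersection with the kernel $\End(A)\oplus\bigwedge^kA^*$ are both subbundles. The operators $\Psi,\Upsilon$ say that the derivation part of every $\kappa\in K$ preserves $D$ (from $\Psi_{(D,\tau)}\alpha=D(\alpha)\in\widehat\Gamma D$) and, projecting $\Upsilon_{(D,\tau)}(a+\omega)=D(a+\omega)-i_a\tau\in\widehat\Gamma L$ onto $A$, preserves $p_1(L)$, dually $p_1(L)^\circ$. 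Since $D\subseteq p_1(L)^\circ$, these derivations descend to derivations of the quotient $\frac{p_1(L)^\circ}{D}$ over $N$ with symbol in $\widehat F$. Using \eqref{Sub2} one checks that the fixed kernel $K\cap(\End(A)\oplus\bigwedge^kA^*)=D\otimes A\oplus L\wedge A^*$ acts trivially there: an $\alpha\otimes a$ with $\alpha\in D$ sends $A^*$ into $D$, while a $\beta\otimes a$ coming from $L\wedge A^*$ has $a\in p_1(L)$ and so annihilates $p_1(L)^\circ$. Hence $\kappa\mapsto$ (its descended operator) is independent of the chosen lift and defines a partial $\widehat F$-connection $\nabla$ on $\frac{p_1(L)^\circ}{D}$. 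Finally $\{\cI^k,\cI^k\}\subseteq\cI^k$, read through the symbol, gives involutivity of $\widehat F$, and read on the quotient gives $[\nabla_u,\nabla_v]=\nabla_{[u,v]}$, the flatness in item (4).

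For the converse one reconstructs $K$ from $(N,D,L,\widehat F,\nabla)$ by prescribing its kernel part via \eqref{Sub2}, its symbol as $\widehat F$, and its descended action as $\nabla$, and checks that the resulting quadruple satisfies \eqref{Sub0}--\eqref{Sub2} and is coisotropic. The main obstacle is precisely this matching at the top degree: one must verify that the derivation and the $\bigwedge^kA^*$-component $\tau$ of $\kappa\in K$ are determined, modulo the fixed kernel, by $\nabla$ alone. For the derivation this is the observation above that $D\otimes A$ absorbs all maps $A^*\to D$ while the endomorphism part of $L\wedge A^*$ absorbs all maps killing $p_1(L)^\circ$, so that modulo the kernel a derivation preserving the flag $D\subseteq p_1(L)^\circ$ is determined by its action on $\frac{p_1(L)^\circ}{D}$; for the form part one uses the $\Upsilon$-condition together with \eqref{Sub1} to see that $i_a\tau$ is pinned down modulo $D\wedge\bigwedge^{k-2}A^*_{|N}$ for $a\in p_1(L)$, the remaining freedom being exactly the form-part of $L\wedge A^*$. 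Checking that these absorptions are simultaneously consistent, and that flatness of $\nabla$ is equivalent to $\{K,K\}\subseteq K$ including its form components, is the technical heart of the argument.
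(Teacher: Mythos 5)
Your proposal is correct and follows essentially the same route as the paper: pass to the quadruple $(N,D,L,K)$ of Proposition \ref{cotsub}, unwind $\{\cI,\cI\}\subseteq\cI$ on the generators in degrees $0,1,k-1,k$ using the classical operations encoding the bracket, and then repackage $K$ as the pair $(\widehat{F},\nabla)$, with involutivity of $K$ giving involutivity of $\widehat F$ and flatness of $\nabla$. Your additional verifications (the intermediate-degree brackets being automatic, the kernel $D\otimes A\oplus L\wedge A^*$ acting trivially on $p_1(L)^\circ/D$, and the reconstruction of $K$ in the converse) are correct elaborations of steps the paper asserts in a single sentence, not a different method.
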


\begin{proof}
 By Proposition \ref{cotsub} submanifolds are the same as $(N, D, L, K)$ satisfying equations \eqref{Sub0},\eqref{Sub1} and \eqref{Sub2}. Associated to the quadruple we have an ideal $\cI\subseteq C_{T^*[k]A[1]}$ given by $\cI^0=Z(N), \ \cI^1=\widehat{\Gamma} D,\ \cI^{k-1}=\widehat{\Gamma} L$ and $\cI^k=\widehat{\Gamma} K$. Denote the image of the projection of $K$ into $\bA_A$ by $\widetilde{F}$  and into $TM$ by $\widehat{F}$. Since $D, L$ and $K$ satisfy \eqref{Sub2} we know that $\widetilde{F}$ and $\widehat{F}$ are vector bundles. 
 
 Using the preceding description of the symplectic bracket in classical terms, the coisotropic condition \eqref{coieq} becomes the following equations:
 \begin{equation*}
 \begin{array}{lll}
 \{\cI^0,\cI^k\}\subseteq\cI^k&\Leftrightarrow&\widehat{F}\subset TN.\\
 \{\cI^1,\cI^{k-1}\}\subseteq\cI^0&\Leftrightarrow&D\subseteq p_1(L)^\circ.\\
 \{\cI^1,\cI^k\}\subseteq\cI^1&\Leftrightarrow&\widetilde{F} \text{ preserves } D.\\
 \{\cI^{k-1},\cI^{k-1}\}\subseteq\cI^{k-2}&\Leftrightarrow&\langle L, L\rangle\subseteq D\wedge \bigwedge^{k-3}A^*_{|N}.\\
 \{\cI^{k-1},\cI^{k}\}\subseteq\cI^{k-1}&\Leftrightarrow&\left\{\begin{array}{l}
  \widetilde{F} \text{ preserves } L.\\
  \text{The projection into the second factor of } K \text{ is inside } L\wedge A^*.
  \end{array}\right.\\
 \{\cI^k,\cI^k\}\subseteq\cI^k&\Leftrightarrow&K  \text{ is involutive so }\widetilde{F} \text{ and } \widehat{F} \text{ are involutive}.
 \end{array}
 \end{equation*}
 Since the second projection of $K$ is inside $L\wedge A^*$ and $K$ satisfies equation \eqref{Sub2} and preserves $L$ and $D$, we know that it is completely characterized by $\widehat{F}$ and a partial $\widehat{F}-$connection on the vector bundle $\frac{p_1(L)^\circ}{D}$. Moreover this connection must be flat since $\widetilde{F}$ is involutive.
\end{proof}

\begin{corollary}{(Lagrangian submanifolds)}\label{charLagrangian}
For $k>2$, a lagrangian submanifold of $T^*[k]A[1]$ is the same as a submanifold $N\subseteq M$ and  a vector bundle $L\to N$ such that $L\subseteq (A\oplus \bigwedge^{k-1}A^*)_{|N}$ and satisfying
\begin{eqnarray}
&p_1(L)\subseteq A \text{ is a subbundle,}\label{L0}\\
		& \ L\cap \bigwedge\nolimits^{k-1}A^*_{|N}=p_1(L)^\circ\wedge\bigwedge\nolimits^{k-2} A^*_{|N},\label{L2} \\
		&\langle L, L\rangle\subseteq p_1(L)^\circ\wedge\bigwedge\nolimits^{k-3}A^*_{|N}.\label{L1}
	\end{eqnarray}
\end{corollary}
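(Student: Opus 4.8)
The plan is to deduce this directly from the coisotropic classification in Theorem~\ref{coisochar}, by imposing the extra dimension constraint from the definition of a lagrangian submanifold. So I would start with coisotropic data $(N,D,L,\widehat{F},\nabla)$ and add the requirement $totdim\,\cN=\tfrac12\,totdim\,(T^*[k]A[1])$; the content of the corollary is that this requirement rigidifies $\widehat F$, $D$ and $\nabla$ while leaving $(N,L)$ free.

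First I would record the total dimension of the ambient manifold. Reading off the generating graded vector bundle $\mathbf{A}$ (with $A_1=A^*$, $A_{k-1}=A$, $A_k=TM$) from the remark following Proposition~\ref{VBchar}, or equivalently counting the local Darboux coordinates, gives $totdim\,(T^*[k]A[1])=2\dim M+2\,rk(A)$, so that $\tfrac12\,totdim\,(T^*[k]A[1])=\dim M+rk(A)$. Substituting the formula \eqref{totdimsub} for $totdim\,\cN$ into the lagrangian equation and cancelling $\dim M+rk(A)$, the condition collapses to
\begin{equation*}
\big(\dim N-rk(\widehat{F})\big)+\big(rk(A)-rk(D)-rk(p_1(L))\big)=0.
\end{equation*}

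The key step is then a non-negativity argument, and this is where the care is needed. Both summands are $\geq 0$: the first because $\widehat{F}\subseteq TN$ (condition (3) of Theorem~\ref{coisochar}), and the second because $D\subseteq p_1(L)^\circ$ (condition (2)) forces $rk(D)\leq rk(A)-rk(p_1(L))$, where I use that $p_1(L)$ is a genuine subbundle — a consequence of \eqref{Sub1} noted in the remark after Proposition~\ref{cotsub} — so that $p_1(L)^\circ$ has rank $rk(A)-rk(p_1(L))$. Since the two non-negative quantities sum to zero, each vanishes, yielding $\widehat{F}=TN$ and, together with $D\subseteq p_1(L)^\circ$, the equality $D=p_1(L)^\circ$. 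I would emphasize that this pins down $\widehat F$ and $D$ but imposes \emph{no} constraint on $N$, which remains an arbitrary submanifold. These two equalities then dispose of the remaining coisotropic data: $D=p_1(L)^\circ$ makes the quotient bundle $\frac{p_1(L)^\circ}{D}$ zero, so the flat partial connection $\nabla$ carries no information, and $\widehat{F}=TN$ is automatically involutive, so condition (3) becomes vacuous. Hence a lagrangian corresponds to the pair $(N,L)$ alone.

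Finally I would translate the surviving coisotropic conditions on $(D,L)$, after substituting $D=p_1(L)^\circ$, into \eqref{L0}--\eqref{L1}: the subbundle property of $p_1(L)$ is exactly \eqref{L0}; condition \eqref{Sub1}, now reading $L\cap\bigwedge^{k-1}A^*_{|N}=p_1(L)^\circ\wedge\bigwedge^{k-2}A^*_{|N}$, is \eqref{L2}; and the pairing condition $\langle L,L\rangle\subseteq D\wedge\bigwedge^{k-3}A^*_{|N}$ becomes \eqref{L1}. For the converse, given $(N,L)$ satisfying \eqref{L0}--\eqref{L1}, I would set $D:=p_1(L)^\circ$, $\widehat{F}:=TN$ and $\nabla$ the unique connection on the zero bundle, check that these assemble into coisotropic data via Theorem~\ref{coisochar}, and verify from \eqref{totdimsub} that the dimension equality $totdim\,\cN=\dim M+rk(A)$ holds, so that the corresponding submanifold is indeed lagrangian. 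The only genuinely delicate point is the dimension bookkeeping together with the observation that the two bracketed quantities are separately non-negative; the remaining translations are direct substitutions.
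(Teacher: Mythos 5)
Your proposal is correct and takes essentially the same route as the paper: the paper likewise deduces the corollary from Theorem~\ref{coisochar} together with the dimension formula \eqref{totdimsub}, with the constraint $totdim\, \cN=\tfrac{1}{2}\, totdim\, T^*[k]A[1]$ forcing $D=p_1(L)^\circ$ and $\widehat{F}=TN$, so that $\nabla$ lives on a zero bundle and only $(N,L)$ survives. Your explicit splitting of the dimension defect into two separately non-negative terms, and your verification of the converse, merely spell out steps the paper leaves implicit.
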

\begin{proof}
	Since $totdim T^*[k]A[1]$ is even a lagrangian submanifold is a coisotropic one that has total dimension half of the total dimension of the manifold. Using the chacterization of coisotropic submanifolds of the Theorem \ref{coisochar} and formula \eqref{totdimsub} we obtain that a coisotropic submanifold given by $(N, D, L, \widehat{F}, \nabla)$ has total dimension equal to $1/2\ totdim T^*[k]A[1]$ if and only if $D=p_1(L)^\circ$ and $\widehat{F}=TN$. As a consequence, $\nabla$ is defined on a zero bundle so the only information that we need is a vector bundle $L\to N$ satisfying \eqref{L0}, \eqref{L2} and \eqref{L1}.
\end{proof}

The equations \eqref{L0}, \eqref{L2} and \eqref{L1} that define a lagrangian submanifold have also the following geometric interpretation:

\begin{corollary}\label{charLag}
For $k>2$, there is a 1-1 correspondence between:
\begin{itemize}
	\item Lagrangian submanifolds of $T^*[k]A[1]$.
	\item Pairs $(E\to N, \Omega)$ where $(E\to N)\subseteq(A_{|N}\to N)$ is a subbundle and $\Omega\in\Gamma\bigwedge^k E^*$.
\end{itemize}
\end{corollary}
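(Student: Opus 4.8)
The plan is to construct the bijection explicitly, building on the more concrete picture already obtained in Corollary~\ref{charLagrangian}: a lagrangian submanifold is the same as a pair $(N,L)$ with $L\subseteq(A\oplus\bigwedge^{k-1}A^*)_{|N}$ satisfying \eqref{L0}, \eqref{L2} and \eqref{L1}. The candidate subbundle is of course $E=p_1(L)\subseteq A_{|N}$, which is a genuine subbundle precisely by \eqref{L0}. The whole argument rests on one elementary linear-algebra fact that I would record first: for any subbundle $E\subseteq A_{|N}$ and any $p$, the restriction map $\bigwedge^p A^*_{|N}\to\bigwedge^p E^*$ is surjective with kernel exactly $E^\circ\wedge\bigwedge^{p-1}A^*_{|N}$ (split $A=E\oplus C$ locally and expand $\bigwedge^p(E^*\oplus C^*)$). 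Under this identification, \eqref{L2} says precisely that the ``vertical'' part $L\cap\bigwedge^{k-1}A^*_{|N}$ is the kernel of restriction to $E$, and \eqref{L1} says that the pairing of any two elements of $L$ restricts to zero on $E$.

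From $(N,L)$ I would produce $\Omega$ as follows. Because $E=p_1(L)$, every $e\in E$ lifts to some $e+\omega\in L$, and by \eqref{L2} the form $\omega$ is unique modulo the kernel of restriction to $E$; hence $\omega|_E\in\bigwedge^{k-1}E^*$ depends only on $e$. Define
\begin{equation*}
\Omega(e_0,e_1,\dots,e_{k-1})=\omega(e_1,\dots,e_{k-1}),\qquad e_0+\omega\in L,\ e_i\in E.
\end{equation*}
This is well defined and visibly alternating in $e_1,\dots,e_{k-1}$. The only point to check is antisymmetry under the swap $e_0\leftrightarrow e_1$: writing $e_0+\omega_0,\,e_1+\omega_1\in L$ and evaluating $\langle e_0+\omega_0,e_1+\omega_1\rangle=i_{e_0}\omega_1+i_{e_1}\omega_0$ on $e_2,\dots,e_{k-1}\in E$, condition \eqref{L1} forces $\omega_1(e_0,e_2,\dots)+\omega_0(e_1,e_2,\dots)=0$, i.e. $\Omega(e_1,e_0,\dots)=-\Omega(e_0,e_1,\dots)$. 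Since the adjacent transpositions $(0\,1),(1\,2),\dots$ generate $S_k$ and $\Omega$ changes sign under each, $\Omega$ is fully alternating, so $\Omega\in\Gamma\bigwedge^k E^*$.

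Conversely, given $(E\to N,\Omega)$ I would set
\begin{equation*}
L=\{\,e+\omega\ :\ e\in E,\ \omega\in\bigwedge\nolimits^{k-1}A^*_{|N},\ \omega|_E=i_e\Omega\,\}.
\end{equation*}
Surjectivity of restriction shows $L$ sits in a short exact sequence $0\to E^\circ\wedge\bigwedge^{k-2}A^*_{|N}\to L\to E\to 0$, so $L$ is a subbundle of constant rank with $p_1(L)=E$, giving \eqref{L0}; the identification of $L\cap\bigwedge^{k-1}A^*_{|N}$ with the restriction kernel gives \eqref{L2}; and the computation $(i_{e_0}\omega_1+i_{e_1}\omega_0)(e_2,\dots)=\Omega(e_1,e_0,e_2,\dots)+\Omega(e_0,e_1,e_2,\dots)=0$ gives \eqref{L1}. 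Finally I would check the two constructions are mutually inverse, which is immediate once one unwinds $\omega|_E=i_e\Omega$ in both directions.

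I expect the only genuine subtlety — the ``main obstacle'' — to be the bookkeeping that upgrades the single pairing condition \eqref{L1} into full antisymmetry of $\Omega$: one must notice that \eqref{L1} directly controls only the transposition of the distinguished slot $e_0$ with another argument, and then invoke that adjacent transpositions generate $S_k$ to conclude. Everything else reduces to the linear algebra of the restriction-to-$E$ map and a rank count for the subbundle property.
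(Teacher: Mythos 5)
Your proposal is correct and takes essentially the same route as the paper: the same identification $\bigwedge^{k-1}A^*_{|N}\big/\bigl(E^\circ\wedge\bigwedge^{k-2}A^*_{|N}\bigr)\cong\bigwedge^{k-1}E^*$, the same graph-type formula for $L$ given $(E\to N,\Omega)$, and in the reverse direction the same map $e\mapsto\omega|_E$ (the paper's $\varphi$ obtained from the exact sequence). The only difference is that you make explicit, via the adjacent-transposition bookkeeping, the step the paper asserts without detail, namely that the condition $i_e\varphi(e')+i_{e'}\varphi(e)=0$ forces $\varphi=\Omega^\sharp$ for some $\Omega\in\Gamma\bigwedge^k E^*$.
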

\begin{proof}
Given a subbundle 
\begin{equation*}
	\begin{array}{lll}
		E&\xrightarrow{j}& A\\
		\downarrow&&\downarrow\\
		N&\xrightarrow{\widehat{j}}&M
	\end{array}
\end{equation*}
 the inclusion map induces a map $j^*:\Gamma A^*\to\Gamma E^*$ that extends to a map $j^*:\Gamma\bigwedge^iA^*\to\Gamma\bigwedge^iE^*$ for all $i\in\bN$. Recall that $ \frac{\bigwedge^{k-1}A^*_{|N}}{E^\circ\wedge\bigwedge^{k-2}A^*_{|N}}\cong\bigwedge^{k-1}E^*$ and the following diagram commutes:
\begin{equation*}
\begin{array}{lll}
\Gamma\bigwedge^{k-1}A^*_{|N}&\xrightarrow{j^*}&\Gamma\bigwedge^{k-1}E^*\\
i_e\ \downarrow&&\downarrow \ i_e\\
\Gamma\bigwedge^{k-2}A^*_{|N}&\xrightarrow{j^*}&\Gamma\bigwedge^{k-2}E^*
\end{array}
\end{equation*}
where $i_e$ is the contraction with $e\in \Gamma E$. 

Given a pair $(E\to N, \Omega)$,  define 
\begin{equation*}
L=\{ e+ w\in \Gamma(A_{|N}\oplus\bigwedge\nolimits^{k-1}A^*_{|N})|\ e\in\Gamma E \text{ and }  i_e\Omega=j^*w\}.
\end{equation*}
Clearly equation \eqref{L2} is satisfied because $p_1(L)=E$ and $L\cap\bigwedge^{k-1}A^*_{|N}=\ker j^* = E^\circ\wedge\bigwedge^{k-2}A^*_{|N}$ as we wish. Let us check now \eqref{L1}:
\begin{equation*}
j^*\langle e+w, e'+w'\rangle=j^*(i_e w'+i_{e'}w)=i_ej^*w'+i_{e'}j^*w=i_e i_{e'}\Omega+i_{e'}i_e\Omega=0
\end{equation*}
$\forall e+w, e'+w'\in \Gamma L$ and therefore we obtain that \eqref{L1} is also satisfied.

Given a lagrangian submanifold $(L\to N)\subseteq( A\oplus \bigwedge^{k-1}A^*)_{|N}$ define $E=p_1(L)$, which is a subbundle by hypothesis. The fact that $L$ satisfies equation \eqref{L2} means that it fits into the exact sequence
\begin{equation*}
\begin{array}{clclc}
\bigwedge\nolimits^{k-1}A^*&\to &A\oplus\bigwedge\nolimits^{k-1}A^*&\to& A\\
E^\circ\wedge\bigwedge\nolimits^{k-2} A^*_{|N}&\to& L&\to& E
\end{array}
\end{equation*}
so there exists a map $\varphi:E\to\frac{\bigwedge^{k-1}A^*_{|N}}{E^\circ\wedge\bigwedge^{k-2}A^*_{|N}}\cong\bigwedge^{k-1}E^*$ such that 
\begin{equation*}
L=\{e+w \in (A\oplus \bigwedge\nolimits^{k-1}A^*)_{|N} | \ e\in E ,\quad \varphi(e)=j^*w\}.
\end{equation*}
Since $L$ also satisfies equation \eqref{L1}, it follows that
\begin{equation*}
0=j^*\langle e+w, e+w'\rangle=j^*(i_e w'+i_{e'}w)=i_ej^*w'+i_{e'}j^*w=i_e\varphi(e')+i_{e'}\varphi(e)
\end{equation*}
$\forall e+w, \ e'+w'\in\Gamma L$ and this is equivalent to $\varphi=\Omega^\sharp$ for some $\Omega\in\Gamma\bigwedge^k E^*$.
\end{proof}

\begin{remark}
	As a consequence of Corollary \ref{charLag} we see that the condition $p_1(L)$ has constant rank is a bit restrictive. For some examples it is interesting to allow the case when $p_1(L)$ changes rank at points on the base. Observe that in this case the other equations also make sense.  We call \emph{weak lagrangian submanifold} a subbundle $L\to N$ over a submanifold of $M$ such that $L\subseteq (A\oplus\bigwedge^{k-1}A^*)_{|N}$ and \eqref{L2} and \eqref{L1} hold.

For these, the characterization of the Corollary \ref{charLag} is just true pointwise. In terms of sheaves of ideals, we are allowing non regular ideals in a particular way, but they are still closed for the Poisson bracket. When $M$ is just a point, the notion of weak lagrangians and lagrangians coincide.
\end{remark}

Let us compare our definition with previous ones that have appeared in the literature, in particular the one given by Hagiwara in \cite{hag:nam} when $A=TM$ and the one given by Wade in \cite{wad:nam}. 
\begin{definition}[see \cite{hag:nam}]
 An almost Nambu-Dirac structure in $TM\oplus \bigwedge^{k-1}T^*M$ is a subbundle $L\subseteq TM\oplus \bigwedge^{k-1}T^*M$ over $M$ satisfying 
	\begin{eqnarray}
			\bigwedge\nolimits^{k-1}p_1(L)=pr_2(L^\circ),\label{H2}\\
		(i_a'w+i_aw')_{|\bigwedge^{k-2}p_1(L)}=0& \forall a+w, a'+w'\in \Gamma L.\label{H1}
	\end{eqnarray}
	where $pr_2:T^*M\oplus \bigwedge^{k-1}TM\to \bigwedge^{k-1}TM$ is the projection. In addition, we say that $L$ is regular if $p_1(L)\subseteq TM$ is a subbundle. 
\end{definition}

\begin{theorem}\label{Na-Di-lag}
	There is a one to one correspondence between 
\begin{equation*}
\left\{
	\begin{array}{c}
	\text{ Regular almost Nambu-Dirac structures }\\
	\text{ in } \ TM\oplus \bigwedge^{k-1}T^*M
	\end{array}\right\}\leftrightharpoons\left\{
	\begin{array}{c}
	\text{ Lagrangian submanifolds }\\
	\text{ of } \ T^*[k]T[1]M \text{ with body } $M$
	\end{array}\right\}
\end{equation*}
More generally, almost Nambu-Dirac structures in $TM\oplus \bigwedge^{k-1}T^*M$ are in correspondence with weak lagrangian submanifolds of $T^*[k]T[1]M$ with body $M$. 	
\end{theorem}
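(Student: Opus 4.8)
The plan is to observe that, after specializing $A=TM$, the two sides of the claimed bijection describe literally the same subbundle $L\subseteq TM\oplus\bigwedge^{k-1}T^*M$, so the correspondence is the identity on $L$ and the only content is the equivalence of the two sets of defining conditions. Indeed, by Corollary \ref{charLagrangian} a lagrangian submanifold of $T^*[k]T[1]M$ with body $M$ is exactly a subbundle $L\subseteq TM\oplus\bigwedge^{k-1}T^*M$ over $M$ satisfying \eqref{L0}, \eqref{L2} and \eqref{L1} (with $A=TM$, $N=M$), while a weak lagrangian is the same data with \eqref{L0} dropped. On the other side, a regular almost Nambu--Dirac structure is a subbundle $L$ of the same ambient bundle satisfying \eqref{H2} and \eqref{H1} together with the regularity condition that $p_1(L)$ be a subbundle, i.e.\ \eqref{L0}, and a general almost Nambu--Dirac structure omits this last requirement. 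Hence it suffices to prove, for any such $L$, the pointwise equivalences \eqref{L2}$\Leftrightarrow$\eqref{H2} and \eqref{L1}$\Leftrightarrow$\eqref{H1}; the regular/weak distinction then matches automatically through \eqref{L0}.

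The engine of both equivalences is the following elementary pointwise lemma: for a linear subspace $E\subseteq T_xM$ and any $p\geq 1$, a form $\beta\in\bigwedge^p T_x^*M$ restricts to zero on $\bigwedge^p E$ if and only if $\beta\in E^\circ\wedge\bigwedge^{p-1}T_x^*M$; equivalently, the annihilator of $\bigwedge^p E$ inside $\bigwedge^p T_x^*M$ equals $E^\circ\wedge\bigwedge^{p-1}T_x^*M$. I would prove this by choosing a splitting $T_xM=E\oplus C$ and decomposing $\bigwedge^p T_x^*M$ into the summand $\bigwedge^p E^*$, on which restriction to $\bigwedge^p E$ is an isomorphism, and summands carrying at least one factor in $E^\circ$, which restrict to zero; thus $\beta$ vanishes on $\bigwedge^p E$ exactly when its $\bigwedge^p E^*$-component does.

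For \eqref{L1}$\Leftrightarrow$\eqref{H1} I would apply the lemma with $p=k-2$ and $E=p_1(L)$ to the $(k-2)$-form $\beta=\langle a+w,a'+w'\rangle=i_aw'+i_{a'}w$: the lemma says $\beta\in p_1(L)^\circ\wedge\bigwedge^{k-3}T^*M$ (condition \eqref{L1}) precisely when $\beta$ restricts to zero on $\bigwedge^{k-2}p_1(L)$ (condition \eqref{H1}). For \eqref{L2}$\Leftrightarrow$\eqref{H2} there is one extra step: I first record the linear-algebra duality $pr_2(L^\circ)=\bigl(L\cap\bigwedge^{k-1}T^*M\bigr)^\circ$, where annihilators use the pairing identifying $(TM\oplus\bigwedge^{k-1}T^*M)^*$ with $T^*M\oplus\bigwedge^{k-1}TM$. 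The inclusion $\subseteq$ is immediate; for $\supseteq$, given $\mu$ annihilating $L\cap\bigwedge^{k-1}T^*M$, the functional $(v,w)\mapsto-\mu(w)$ descends along $p_1$ to a functional on $p_1(L)$, any extension of which furnishes $\xi$ with $(\xi,\mu)\in L^\circ$. Taking annihilators of both sides of \eqref{H2} and using the double-annihilator identity turns it into $L\cap\bigwedge^{k-1}T^*M=\bigl(\bigwedge^{k-1}p_1(L)\bigr)^\circ$, and the lemma with $p=k-1$, $E=p_1(L)$ rewrites the right-hand side as $p_1(L)^\circ\wedge\bigwedge^{k-2}T^*M$, which is exactly \eqref{L2}.

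The main obstacle is bookkeeping rather than conceptual: one must stay consistent about which natural pairing identifies the relevant duals (the contraction pairing between $\bigwedge^{k-1}T^*M$ and $\bigwedge^{k-1}TM$, together with $pr_2$ onto the latter) and apply the double-annihilator identity correctly fiberwise. Since every step is pointwise linear algebra, the argument is insensitive to whether $p_1(L)$ has locally constant rank, which is precisely why the same proof delivers both the regular statement (imposing \eqref{L0}) and the more general weak statement.
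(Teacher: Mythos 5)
Your proof is correct and follows essentially the same route as the paper's: both reduce via Corollary \ref{charLagrangian} to the pointwise equivalences \eqref{L1}$\Leftrightarrow$\eqref{H1} and \eqref{L2}$\Leftrightarrow$\eqref{H2}, the latter via annihilator duality using the identities $\left(p_1(L)^\circ\wedge\bigwedge\nolimits^{k-2}T^*M\right)^\circ=\bigwedge\nolimits^{k-1}p_1(L)$ and $\left(L\cap\bigwedge\nolimits^{k-1}T^*M\right)^\circ=pr_2(L^\circ)$, with the regular/weak distinction matched through \eqref{L0}. The only difference is level of detail: you prove the two linear-algebra facts (the splitting lemma and the $pr_2$ computation) that the paper asserts without justification.
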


\begin{proof}
	Using Corollary \ref{charLagrangian} we just need to prove that equations \eqref{L2} and \eqref{L1} are equivalent to \eqref{H2} and \eqref{H1}. Clearly \eqref{L1} is the same as \eqref{H1}. Now we check that the annhilator of \eqref{L2} gives \eqref{H2}:
	\begin{equation*}
	\left(p_1(L)^\circ\wedge\bigwedge\nolimits^{k-2}A^*_{|N}\right)^\circ=\bigwedge\nolimits^{k-1}p_1(L)
	\end{equation*}
	and
	\begin{equation*}
	\left(L\cap \bigwedge\nolimits^{k-1}A^*_{|N}\right)^\circ=pr_2\left(L^\circ\oplus(\bigwedge\nolimits^{k-1}A^*_{|N})^\circ\right)=pr_2(L^\circ\oplus A^*_{|N})=pr_2(L^\circ).
	\end{equation*}
	It is clear that if the almost Nambu-Dirac structure is regular we obtain a lagrangian submanifold otherwise we just have a weak lagrangian submanifold.
\end{proof}

\begin{definition}[see \cite{wad:nam}]
An almost Dirac structure of order $k$ on $A\to M$ is a subbundle $L\subseteq A\oplus \bigwedge^{k-1}A^*$ over $M$ satisfying that there exists $E\subseteq A$ subbundle over $M$ and $\Omega\in \Gamma\bigwedge^{k}E^*$ such that
\begin{eqnarray}
\forall p\in M\quad L_p=\{ e+w\in( A\oplus \bigwedge\nolimits^{k-1}A^*)_p| \ e\in E_p, \ i_e\Omega_p=w_{|\bigwedge^{k-1}E}\},\label{w1}\\
\forall Z_1,\cdots, Z_{k-1}\in\Gamma\bigwedge\nolimits^{k-1}E\quad \exists \ e\in\Gamma E \text{ such that } i_{Z_1}\Omega\wedge\cdots i_{Z_{k-1}}\Omega=i_e\Omega.\label{w2}
\end{eqnarray}

\end{definition}

Using Corollary \ref{charLag} we see that an almost Dirac structure of order $k$ as defined by Wade is the same as a lagrangian submanifold of $T[k]A[1]$ with body $M$ that in addition satisfies equation \eqref{w2}. Therefore the Wade almost Dirac structures of order $k$ are a particular case of lagrangian submanifolds of $T^*[k]A[1]$.

In the literature, apart from the definitions of Hagiwara and Wade there are at least four more approaches. Some of them are defined for an arbitrary vector bundle and some of them just for $TM$. By chronological order: In \cite{zab:bra} Bonelli and Zabzine define almost generalized Dirac structures as maximal isotropic subbundles of $TM\oplus\bigwedge^{k-1}T^*M$, in \cite{zam:inf} Zambon notices that the geometry of $TM\oplus\bigwedge^{k-1} T^*M$ is related to the one coming from $T^*[k]T[1]M$ but use different equations and call them higher Dirac structures. The work of Bi and Sheng \cite{bi:dir}, using the name of $(p,k)-$Dirac structure, gives a definition that includes the Nambu-Dirac structure of Hagiwara as the case $(k-1, k-2)$ and the higher Dirac of Zambon as the case $(k-1,0)$. Finally in \cite{bur:hig} Bursztyn, Martinez and Rubio give another possible definition  based on Zambon keeping the terminology, but with different requirements.

We believe that this Supergeometric approach sheds light on the Hagiwara definition, justifying his equations and putting them into the general framework of lagrangian submanifolds inside graded symplectic manifolds. 
 
\begin{example}[Conormal bundles]\label{foliations}
Given any submanifold of $A[1]$, i.e. $(B\to N)\subseteq (A\to M)$ subbundle, that we denote by $B[1]\subseteq A[1]$, the conormal bundle shifted by $k,\quad N^*[k]B[1]\subseteq T^*[k]A[1],$ defines a lagrangian submanifold. The associated vector bundle is given by
		\begin{equation*}
			L=B\oplus B^\circ\wedge \bigwedge\nolimits^{k-2}A^*_{|N}.
		\end{equation*}
The associated pair as in Corollary \ref{charLag} is $(B\to N, 0)$. In the case where $A=TM$ this is codifying regular distributions. Observe that in this case weak lagrangians are not encoding singular distributions because $L$ itself will be singular.
\end{example}   

\begin{example}[k-forms]\label{k-plectic}
	Using Corollary \ref{charLag} we know that given any $\omega\in\Gamma \bigwedge^kA^*$ we obtain a lagrangian submanifold by considering the pair $(A\to M, \omega)$. In fact, these lagrangian submanifolds correspond to the ones satisfying $$L\cap \bigwedge\nolimits^{k-1}A^*=0.$$
\end{example}

\begin{definition}
Let $A\to M$ be a vector bundle and $\Pi\in\Gamma\bigwedge^k A$. We say that $\Pi$ is \emph{decomposable} if $\forall p\in M$ where $\Pi_p\neq0$ there exists an open neighbourhood $U$ where $A_{|U}$ is trivial and basis of sections of $A, \ \{a_1, \cdots, a_k\}$ such that
\begin{equation*}
\Pi_{|U}=f(x) a_1\wedge\cdots\wedge a_k\qquad f(x)\in C^\infty(M).
\end{equation*}
\end{definition}

\begin{example}[Decomposable k-multi-vector field]\label{nambu}
Now we study the other extreme case. Suppose that we have a weak lagrangian submanifold over $M$ with $$L\cap A=0.$$ Then we know that $L=graph(\Lambda)$ for some  $\Lambda:S\to A$ with $S\subseteq \bigwedge^{k-1}A^*$ and satisfies 
\begin{eqnarray*}
&\ker(\Lambda)=\im(\Lambda)^\circ\wedge\bigwedge\nolimits^{k-2}A^*,\\
&i_{\Lambda(\omega)}\omega'+i_{\Lambda(\omega')}\omega\in\im(\Lambda)^\circ\wedge\bigwedge\nolimits^{k-3}A^*.
\end{eqnarray*}
If we impose that $S=\bigwedge^{k-1}A^*$ on the one hand the first equation says that $\Lambda=\Pi^{\sharp}$ for some $\Pi\in\Gamma\bigwedge^{k}A$. On the other hand, it was seen by Hagiwara on \cite{hag:nam} that the second equation is equivalent to $\Pi$ being decomposable. In general, decomposable tensors give rise to weak lagrangians, the condition that $p_1(L)$ is a subbundle implies that the tensor is zero or never vanishes, so we can dualize and obtain a $k$-form. This example is the main reason why we also want to consider weak lagrangian submanifolds.
\end{example}

\section{The symplectic $Q$-structure}\label{S4}

In the previous sections we established a relation between the symplectic structure on the manifolds $T^*[k]A[1]$ and the natural pairing on $A\oplus\bigwedge^{k-1}A^*$. For $k=2$, it was proved by \v{S}evera and Roytenberg that the Courant brackets on $A\oplus A^*$ correspond to the $Q$-structures on $T^*[2]A[1]$ that preserve the symplectic structure. Following this idea, we study the symplectic $Q$-structures on $T^*[k]A[1]$ and construct the corresponding brackets on $A\oplus\bigwedge^{k-1}A^*$. Finally we define higher Dirac structures on $A\oplus\bigwedge^{k-1}A^*$ as the Lagrangian $Q$-submanifolds of $T^*[k]A[1]$, these are called $\Lambda$-structures in \cite{sev:some}.

\subsection{Q-structures and twists}

As it was defined in Section \ref{S2} a symplectic $Q$-structure on $T^*[k]A[1]$ is a vector field $Q\in \fX^{1}(T^*[k]A[1])$ that satisfies equations \eqref{Q-man} and \eqref{Q-poisson}. For $k\geq 2$, we can use Proposition \ref{sym=ham} and conclude that $Q=\{\theta,\cdot\}$ for some $\theta\in C^{k+1}_{T^*[k]A[1]}$. In addition, we have the following equivalence for condition \eqref{Q-man}:
\begin{equation}\label{CME}
0=[Q,Q]=2Q^2=2\{\theta,\{\theta,\cdot\}\}\Leftrightarrow \{\theta,\theta\}=0,
\end{equation}
which follows from  the graded Jacobi identity of the symplectic bracket. This last equation is known as the classical master equation. Our goal is  to express $\theta\in C^{k+1}_{T^*[k]A[1]}$ satisfying \eqref{CME} in terms of
classical geometric objects.

The case $k=2$ was considered in detail by Kosmann-Schwarzbach, see e.g. \cite{kos:qua}, and it gives rise to Lie-quasi bialgebroids and quasi-Lie bialgebroids. As a consequence of the results that we will prove in this section, we will see that $k=2$ is the most flexible case, their higher versions being much more rigid. The next result deals with $k=3$, see \cite{gru:h-t,ike:pq3}:

\begin{theorem}\label{Q=3}
 Symplectic $Q$-structures on $T^*[3]A[1]$ are equivalent to $(A\to M, [\cdot,\cdot], \rho, \langle\cdot,\cdot\rangle, H)$ where $[\cdot,\cdot]:\Gamma (A\wedge A)\to \Gamma A$,  $\rho:A\to TM$ vector bundle map covering the identity, $\langle\cdot,\cdot\rangle$ is a pairing on the vector bundle $A^*$  and $H\in\Gamma\bigwedge^4 A^*$ satisfying the following conditions:
 \begin{equation*}
		\left\{\begin{array}{lr}
		\left[a,fb\right]=f\left[a,b\right]+\cL_{\rho(a)}f b &a,b,c\in\Gamma A,\\
		\left[a,\left[b,c\right]\right]+c.p.=\flat(i_ai_bi_c H)& f\in C^\infty(M),\\
		\langle\cL_a \omega,\tau\rangle+\langle \omega,\cL_a\tau\rangle=\rho(a)\langle \omega,\tau\rangle& \omega,\tau\in\Gamma A^*,\\
		d_A H=0.
		\end{array}
		\right.
	\end{equation*}
	where $\flat: A^*\to A$ is the morphism induced by the pairing on $A^*$ and $d_A:\Gamma\bigwedge^j A^*\to \Gamma\bigwedge^{j+1} A^*$ is the derivation associated with the pair $[\cdot,\cdot]$ and $\rho$, as defined in \eqref{dif}.
\end{theorem}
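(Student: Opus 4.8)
The plan is to adapt Roytenberg's $k=2$ strategy: first identify the degree-$(k+1)=4$ functions $\theta$ on $T^*[3]A[1]$ with the four tensors $(\rho,[\cdot,\cdot],\langle\cdot,\cdot\rangle,H)$ \emph{before} imposing any equation, and then translate the classical master equation $\{\theta,\theta\}=0$ (which, as noted before the statement, is equivalent to the $Q$-condition) into the four listed identities. By Proposition \ref{VBchar}, equivalently working in the Darboux chart $\{x^i,\alpha^j,a_j=\frac{\partial}{\partial\alpha^j},\frac{\partial}{\partial x^i}\}$ of degrees $0,1,2,3$, a function $\theta\in C^4_{T^*[3]A[1]}$ is a sum of degree-$4$ monomials, and the only available types are $\frac{\partial}{\partial x^i}\alpha^j$, $a_ja_k$, $a_k\alpha^i\alpha^j$ and $\alpha^i\alpha^j\alpha^k\alpha^l$. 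Collecting coefficients I read off a section of $Sym^2 A$ (the symmetric $a_ja_k$-part), which is the pairing $\langle\cdot,\cdot\rangle$ on $A^*$ with induced $\flat\colon A^*\to A$; an element $H\in\Gamma\bigwedge^4 A^*$ (the $\alpha^4$-part); and, from the $\frac{\partial}{\partial x^i}\alpha^j$- and $a_k\alpha^i\alpha^j$-parts, the anchor $\rho\colon A\to TM$ and a skew bracket on $\Gamma A$.

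To make the last two invariant I would use the derived bracket $[a,b]=\{\{a,\theta\},b\}$ on $\Gamma A\subseteq C^2$. The graded Leibniz rule for the Poisson bracket forces $[a,fb]=f[a,b]+(\rho(a)f)\,b$ with $\rho(a)f:=\{\{a,\theta\},f\}$, so that the Leibniz identity (the first listed condition) holds \emph{automatically} and is exactly the statement that $\theta\leftrightarrow(\rho,[\cdot,\cdot],\langle\cdot,\cdot\rangle,H)$ is a bijection; the $a\alpha\alpha$-part supplies the tensorial part of $[\cdot,\cdot]$ while the $\frac{\partial}{\partial x}\alpha$-part supplies $\rho$. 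Writing $\theta=\theta_\rho+\theta_{[,]}+\theta_{\langle,\rangle}+\theta_H$ for the four pieces completes this dictionary.

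With the dictionary in place I would compute $\{\theta,\theta\}\in C^5$ and decompose it by monomial type, the admissible types being $\frac{\partial}{\partial x}\alpha\alpha$, $\frac{\partial}{\partial x}a$, $aa\alpha$, $a\alpha\alpha\alpha$ and $\alpha^5$. The $\alpha^5$-component receives contributions only from $\{\theta_\rho,\theta_H\}$ and $\{\theta_{[,]},\theta_H\}$, which reassemble into $d_A H=0$. The $a\alpha\alpha\alpha$- and $\frac{\partial}{\partial x}\alpha\alpha$-components come from $\{\theta_{[,]},\theta_{[,]}\}$, $\{\theta_\rho,\theta_{[,]}\}$, $\{\theta_\rho,\theta_\rho\}$ and the crucial cross term $\{\theta_{\langle,\rangle},\theta_H\}$; read section-wise they combine into $[a,[b,c]]+\,\mathrm{c.p.}=\flat(i_ai_bi_cH)$, where the right-hand side is produced precisely by the single contraction pairing one $a$ of $\theta_{\langle,\rangle}$ against one leg of $H$ and then contracting the remaining three-form against $a,b,c$. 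Finally the $aa\alpha$- and $\frac{\partial}{\partial x}a$-components arise from $\{\theta_\rho,\theta_{\langle,\rangle}\}$ and $\{\theta_{[,]},\theta_{\langle,\rangle}\}$ and, read section-wise, amount to the invariance of the pairing.

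The main obstacle is the bookkeeping in this last step, both in tracking graded signs and in recognizing that the Jacobiator identity and the invariance condition each package \emph{two} distinct monomial-components. Concretely, the $\frac{\partial}{\partial x}\alpha\alpha$-component in isolation yields only the tensorial Jacobiator together with $\rho([a,b])=[\rho(a),\rho(b)]$, while the $\frac{\partial}{\partial x}a$-component in isolation yields $\rho\circ\flat=0$; these are exactly the extra relations reproduced by the Leibniz substitutions $c\mapsto fc$ in the Jacobiator identity and $a\mapsto fa$ in the invariance condition, so that no separate hypotheses on $\rho$ need to be listed. I would verify these two implications by hand to confirm that the section-wise identities are genuinely equivalent to the full master equation, and I would use the degenerate case $A=TM$ (where invariance forces $\flat=0$) as a consistency check.
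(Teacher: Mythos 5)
Your proposal is correct and follows essentially the same route as the paper: your monomial-type decomposition of $\theta$ is the Darboux-chart version of the paper's invariant splitting $C^{4}_{T^*[3]A[1]}=\fX^{1}(A[1])\oplus Sym^2\ \fX^{-1}(A[1])\oplus C^{4}_{A[1]}$ into $\theta+\pi+H$, your derived-bracket dictionary is the paper's ``slight generalization of Theorem~\ref{vaintrobcorr}'', and your five monomial components regroup exactly into the paper's three equations $\{\theta,\theta\}+2\{\pi,H\}=0$, $\{\theta,\pi\}=0$, $\{\theta,H\}=0$. Your Leibniz-substitution argument, recovering $\rho([a,b])=[\rho(a),\rho(b)]$ and $\rho\circ\flat=0$ from the section-wise identities so that they need not be listed as hypotheses, is precisely the content the paper compresses into ``it is easy to check that these three equations are equivalent to the last three equations in the statement''.
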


\begin{proof}
Since $Q=\{\lambda,\cdot\}$, it is enough to study degree $4$ functions on $T^*[3]A[1]$ that satisfy the classical master equation. Using \eqref{functions} we have that 
\begin{equation*}
C^{4}_{T^*[3]A[1]}=\fX^{1}(A[1])\oplus Sym^2\  \fX^{-1}(A[1])\oplus C^{4}_{A[1]}
\end{equation*} 
so any function of degree $4$ can be written as $\theta+\pi+H$ where $\theta\in \fX^{1}(A[1])$, $\pi\in Sym^2\ \fX^{-1}(A[1])=\Gamma Sym^2 A$ and $H\in C^{4}_{A[1]}=\Gamma\bigwedge^4 A^*$. By a slight generalization of Theorem \ref{vaintrobcorr}, it is easy to show that $\theta\in\fX^1(A[1])$ is equivalent to a bracket, $[\cdot,\cdot]:\Gamma(A\wedge A)\to\Gamma A$ and an anchor $\rho:A\to TM$ satisfying the first equation of the statement. Also, one can identify $\pi\in\Gamma Sym^2 A$ with a pairing on $A^*$.

Finally, the classical master equation is equivalent to 
\begin{equation*}
0=\{\theta+\pi+H,\theta+\pi+H\}=\{\theta,\theta\}+\{\pi,\pi\}+\{H,H\}+2(\{\theta,\pi\}+\{\theta,H\}+\{\pi,H\})
\end{equation*}
and if we remember that the symplectic bracket on $T^*[3]A[1]$ corresponds to the Schouten bracket on $A[1]$ shifted by $3$ and recall how vector fields act on other vector fields and on functions, we obtain that this equation is equivalent to
\begin{equation*}
\left\{\begin{array}{l}
\{\theta,\theta\}+2\{\pi,H\}=0,\\
\{\theta,\pi\}=0,\\
\{\theta,H\}=0.
\end{array}\right.
\end{equation*}
It is easy to check that these three equations are equivalent to the last three equations in the statement. 
\end{proof}

In particular, if the pairing is zero we have a honest Lie algebroid on $A\to M$ with $H\in\Gamma \bigwedge^4 A^*$ satisfying $d_A H=0$. On the other hand, when $H=0$ we obtain a Lie algebroid on $A\to M$ with an ad-invariant paring on $A^*\to M$. The case when the three structures are nontrivial is analogous to the case $k=2$ and should be thought of as a quasi-algebroid. When $A\to M$ is just a vector space these structures were described in \cite{cat:top}. For other examples see \cite{gru:h-t,ike:pq3}.

\begin{theorem}\label{Q>3}
For $k>3$, symplectic $Q$-structures on $T^*[k]A[1]$ are equivalent to a Lie algebroid structure on $A\to M$ and $H\in \Gamma\bigwedge^{k+1}A^*$ with $d_A H=0$.
\end{theorem}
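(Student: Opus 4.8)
The plan is to mimic the proof of Theorem~\ref{Q=3}, the decisive difference being a degree count that kills the symmetric bivector term that was present for $k=3$. By Proposition~\ref{sym=ham} every symplectic $Q$-structure has the form $Q=\{\theta,\cdot\}$ with $\theta\in C^{k+1}_{T^*[k]A[1]}$, and by \eqref{CME} the condition $[Q,Q]=0$ is equivalent to the classical master equation $\{\theta,\theta\}=0$. So the first task is to determine $C^{k+1}_{T^*[k]A[1]}$. By \eqref{functions} this is the degree $k+1$ part of $Sym(\fX^{\bullet+k}(A[1]))$; since the generators of $\fX^{\bullet+k}(A[1])$ sit in degrees $\geq k-1$ (the shift of $\fX^{-1}(A[1])=\Gamma A$), any summand $Sym^m$ with $m\geq 2$ contributes only in degrees $\geq 2(k-1)$, and $2(k-1)>k+1$ precisely when $k>3$. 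Hence for $k>3$ only the $Sym^0$ and $Sym^1$ pieces survive, so that
\[
C^{k+1}_{T^*[k]A[1]}=\fX^1(A[1])\oplus\Gamma\bigwedge\nolimits^{k+1}A^*,
\]
and one writes $\theta=\theta_0+H$ with $\theta_0\in\fX^1(A[1])$ and $H\in\Gamma\bigwedge^{k+1}A^*$. This is exactly the step where $k>3$ enters: for $k=3$ the extra term $Sym^2\fX^{-1}(A[1])=\Gamma Sym^2A$ appears and produces the pairing $\pi$ of Theorem~\ref{Q=3}.

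Next I interpret the two surviving pieces. As in the proof of Theorem~\ref{Q=3} (a mild generalization of Vaintrob's Theorem~\ref{vaintrobcorr}), a degree $1$ vector field $\theta_0\in\fX^1(A[1])$ amounts to a skew bracket $[\cdot,\cdot]:\Gamma\bigwedge^2A\to\Gamma A$ together with an anchor $\rho:A\to TM$ satisfying the Leibniz rule; acting on $C_{A[1]}=\Gamma\bigwedge^\bullet A^*$ it is precisely the associated derivation $d_A$ of \eqref{dif}. The remaining piece $H$ plays the role of the potential twist.

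Finally I expand the master equation as $\{\theta,\theta\}=\{\theta_0,\theta_0\}+2\{\theta_0,H\}+\{H,H\}$ and evaluate each term with the rules \eqref{symbr}: $\{\theta_0,\theta_0\}=[\theta_0,\theta_0]\in\fX^2(A[1])$, $\{\theta_0,H\}=\theta_0(H)=d_AH\in\Gamma\bigwedge^{k+2}A^*$, and $\{H,H\}=0$ since $H$ is independent of the momentum coordinates in the Darboux chart. The first term lives in the $Sym^1$ (vector field) summand and the second in the $Sym^0$ (function) summand of $C^{k+2}_{T^*[k]A[1]}$, so they cannot cancel; therefore $\{\theta,\theta\}=0$ is equivalent to the two independent conditions $[\theta_0,\theta_0]=0$ and $d_AH=0$. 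By Theorem~\ref{vaintrobcorr} the first says exactly that $(A,[\cdot,\cdot],\rho)$ is a genuine Lie algebroid, while the second is the stated cocycle condition, and this establishes the correspondence. The only delicate point is the bookkeeping of the first paragraph: one must verify that the degree count really removes every contribution other than $\theta_0$ and $H$, and that in the target $C^{k+2}_{T^*[k]A[1]}$ the vector field and function summands remain independent so the two halves of the master equation decouple; everything else is routine once these degree computations are carried out.
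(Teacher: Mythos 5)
Your proposal is correct and follows essentially the same route as the paper: decompose $\theta=\theta_0+H$ using the degree count in \eqref{functions} (which for $k>3$ kills the $Sym^2\,\fX^{-1}(A[1])$ term present at $k=3$), split the master equation into the independent conditions $\{\theta_0,\theta_0\}=0$ and $\{\theta_0,H\}=d_AH=0$, and invoke Theorem~\ref{vaintrobcorr}. The only difference is that you spell out the degree bookkeeping and the decoupling of the two summands, which the paper's proof asserts without detail.
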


\begin{proof}
As in the previous theorem, we must study the functions of degree $k+1$ satisfying the classical master equation. Since $k>3$, we have that 
\begin{equation*}
C^{k+1}_{T^*[k]A[1]}=\fX^1(A[1])\oplus C^{k+1}_{A[1]}
\end{equation*}
by \eqref{functions}, so any $k+1$ function can be written as $\theta+H$, where $\theta\in\fX^1(A[1])$ and $H\in C^{k+1}_{A[1]}=\Gamma\bigwedge^{k+1}A^*$; the classical master equation is equivalent to
\begin{equation*}
 \left\{ \begin{array}{l}
\{\theta,\theta\}=0,\\
\{\theta,H\}=0.
\end{array}
\right.
\end{equation*}

By Theorem \ref{vaintrobcorr}, $\theta$ defines a Lie algebroid on $A\to M$. The other equation says that $d_A H=0$, where $d_A$ is the Lie algebroid differential on $\Gamma\bigwedge^\bullet A^*$.
\end{proof}

From now on, we use the notation $\theta_H=\theta+H$ and $X_{\theta_H}=\{\theta_H,\cdot\}$ for the corresponding hamiltonian vector field.
\subsection{Q-cohomology}

If we have a $Q$-manifold $(\cM, Q)$, the fact that $Q$ has degree $1$ and $Q^2=0$ implies that $(C_{\cM}, Q)$ becomes a differential complex. Therefore, we have an associated cohomology that we denote by $H_Q(\cM)$. Observe that, in contrast with other well known cohomologies, it is usual that $H^n_Q(\cM)\neq 0 \quad \forall n\in\bN$. For example, if $Q=0$ and $\cM$ has degree $2$ then $H^n_Q(\cM)=C_\cM^n\neq 0$. For the standard Courant algebroid $(T^*[2]T[1]M,\{\cdot,\cdot\},\theta)$ this cohomology is isomorphic to the de Rham cohomology of $M$. 

Also it is well known that exact Courant algebroids are classified by their \v{S}evera classes $[H]\in H^3(M)$. We will extend this result for $k>3$.

\begin{definition}
For any $k\geq 2$ and any element $B\in\Gamma\bigwedge^{k}A^*=C^k_{A[1]}\subset C^k_{T^*[k]A[1]}$, let $\tau^{B}:T^*[k]A[1]\to T^*[k]A[1]$  be the graded automorphism given by the exponential of the hamiltonian vector field of $B$, i.e. $\tau^B=Id-\{B,\cdot\}$, or more concretely:
\begin{equation*}
	\tau^B=\left\{\begin{array}{l}
	\tau^{B}=Id: M\to M,\\
	\tau^B:C_{T^*[k]A[1]}\to C_{T^*[k]A[1]}, \quad \tau^B(f+X)=f+X+X(B).
	\end{array}
	\right.
\end{equation*}
for $ f\in C_{A[1]},\  X\in \fX(A[1])$.
\end{definition}

	The symmetries $\tau^B$ were introduced in \cite{gua:gen} and also defined in \cite{bi:on, zab:bra, bou:aks}.
\begin{proposition}\label{symetries}
The diffeomorphisms $\tau^B:T^*[k]A[1]\to T^*[k]A[1]$ are symplectomorphisms and send $X_{\theta_H}$ to $X_{\theta_{H+d_A B}}$.
\end{proposition}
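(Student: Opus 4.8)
The plan is to verify two independent facts: first that each $\tau^B$ preserves the symplectic (Poisson) bracket, and second that it conjugates the homological vector field $X_{\theta_H}$ into $X_{\theta_{H+d_A B}}$. Since $\tau^B = \exp(\{B,\cdot\})$ is the time-one flow of a Hamiltonian vector field, I expect both statements to reduce to algebraic identities about the graded Poisson bracket, with no analysis needed. The key structural point is that $B\in C^k_{A[1]}\subset C^k_{T^*[k]A[1]}$ has no vector-field component, so $\{B,\cdot\}$ is nilpotent on $C_{T^*[k]A[1]}$ and the exponential truncates, making the explicit formula $\tau^B(f+X)=f+X+X(B)$ (for $f\in C_{A[1]}$, $X\in\fX(A[1])$) literally correct rather than merely formal.

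For the symplectomorphism claim, I would argue that $\{B,\cdot\}$ is a degree-$0$ Hamiltonian vector field, hence Poisson (it satisfies \eqref{Q-poisson} with $i=0$ by the graded Jacobi identity for the Schouten bracket \eqref{symbr}); its flow $\tau^B$ therefore preserves the bracket. Alternatively, and more in the spirit of the explicit formula given, I would check directly that $\tau^B$ preserves the bracket on generators. Because $B$ lives in degree $k$, the nonzero contractions $X(B)$ for $X\in\fX(A[1])$ only shift degrees by a controlled amount, and one verifies $\tau^B\{g,h\}=\{\tau^B g,\tau^B h\}$ on the Darboux coordinates $\{x^i,\alpha^j,a_j,\partial/\partial x^i\}$; the vanishing of the cross terms uses that $B$ is a function on $A[1]$ (involving only $x^i,\alpha^j$) together with the Leibniz rule for how $\partial/\partial x^i$ and $a_j=\partial/\partial\alpha^j$ act on $B$.

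The main content is the second claim, $\tau^B_* X_{\theta_H}=X_{\theta_{H+d_A B}}$. Since $\tau^B$ is a symplectomorphism, pushing forward the Hamiltonian vector field of $\theta_H$ gives the Hamiltonian vector field of $\tau^B(\theta_H)$, so it suffices to compute $\tau^B(\theta_H)$ and show it equals $\theta_{H+d_A B}=\theta+H+d_A B$. Writing $\theta_H=\theta+H$ with $\theta\in\fX^1(A[1])$ and $H\in C^{k+1}_{A[1]}$, the formula for $\tau^B$ fixes the function part $H$ (and leaves $\theta$'s function content unchanged) but modifies $\theta$ by $\theta(B)$. The crux is the identity $\theta(B)=d_A B$: recalling that under Vaintrob's correspondence (Theorem \ref{vaintrobcorr}) the degree-$1$ vector field $\theta$ is exactly the Lie algebroid differential $d_A$ acting via \eqref{dif} on $C_{A[1]}=\Gamma\bigwedge^\bullet A^*$, applying $\theta$ to $B\in\Gamma\bigwedge^k A^*$ yields precisely $d_A B\in\Gamma\bigwedge^{k+1}A^*=C^{k+1}_{A[1]}$. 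Hence $\tau^B(\theta+H)=\theta+H+d_A B=\theta_{H+d_A B}$, as required.

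The step I expect to be the main obstacle is bookkeeping the signs and degree shifts when verifying $\tau^B(\theta_H)=\theta_{H+d_A B}$ directly from $\tau^B(f+X)=f+X+X(B)$: one must be careful that applying $\tau^B$ to the full function $\theta_H$ (a sum of a vector-field piece and a function piece of total degree $k+1$) produces no unwanted higher-order terms, which is guaranteed by nilpotency, and that the contraction $\theta(B)$ is interpreted as $d_A$ acting on $B$ with the correct sign conventions inherited from \eqref{dif}. Once the identification $\theta(B)=d_A B$ is in hand, the result follows immediately, so I would front-load the verification that $\{B,\cdot\}$ truncates and that its action on $\theta$ reproduces the Lie algebroid differential.
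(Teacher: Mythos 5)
Your proposal is correct and follows essentially the same route as the paper: verify the symplectomorphism property via the graded Jacobi identity together with the vanishing $\{\{B,f\},\{B,g\}\}=0$ (which is what makes your ``Hamiltonian flow'' phrasing rigorous and the exponential truncate), then use it to reduce the second claim to the computation $\tau^B(\theta+H)=\theta+H+\theta(B)=\theta+H+d_AB$, with $\theta(B)=d_AB$ coming from Vaintrob's correspondence. The only cosmetic difference is that the paper carries out the bracket-preservation check directly on arbitrary functions rather than invoking the flow interpretation or Darboux coordinates, but the algebraic content is identical.
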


\begin{proof}
	A diffeomorphism is a symplectomorphism if and only if preserves the Poisson bracket and we have that $\forall \ f,g\in C_{T^*[k]A[1]}$
	\begin{equation*}
		\{\tau^B f,\tau^B g\}=\{f-\{B,f\},g-\{B,g\}\}=\{f,g\}-\{f,\{B,g\}\}-\{\{B,f\},g\}=\{f,g\}-\{B,\{f,g\}\}
	\end{equation*}
	where we use the graded Jacobi identity and the fact that $\{\{B,f\},\{B,g\}\}=0\ \forall f,g\in C_{T^*[k]A[1]}$. Therefore $\tau^B$ are symplectomorphisms. For the second assertion, since the maps are symplectomorphisms,  it is enough to notice that
	\begin{equation*}
	\tau^B(\theta+H)=\theta+H+\theta(B)=\theta+H+d_A(B).\qedhere
	\end{equation*}
	
\end{proof}

As a consequence of Proposition \ref{symetries}, we have that if $(A\to M, [\cdot,\cdot],\rho)$ is a Lie algebroid and $H,H'\in\Gamma\bigwedge^{k+1}A^*$ are such that $d_A H=d_A H'=0$ with $H-H'=d_A B$ for some $B\in\Gamma\bigwedge^kA^*$, so $(T^*[k]A[1],X_{\theta_H})$ and $(T^*[k]A[1],X_{\theta_{H'}})$ have the same $Q$-cohomology.

\begin{corollary}
Let $(A\to M,[\cdot,\cdot],\rho)$ be a Lie algebroid and denote by $H^\bullet(A)$ its Lie algebroid cohomology. For $k>3$, the $Q$-structures on $T^*[k]A[1]$ compatible with the symplectic structure for which $p:T^*[k]A[1]\to A[1]$ is a $Q$-morphism are parametrized up to equivalence by $H^{k+1}(A)$.
\end{corollary}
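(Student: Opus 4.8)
The plan is to combine the previous two results in this subsection. By Theorem~\ref{Q>3}, for $k>3$ a symplectic $Q$-structure on $T^*[k]A[1]$ for which $p:T^*[k]A[1]\to A[1]$ is a $Q$-morphism is exactly the data $\theta_H=\theta+H$, where $\theta$ encodes the fixed Lie algebroid structure on $A$ (the morphism condition forces the underlying $Q$-structure on $A[1]$ to be the given $d_A$, so $\theta$ is not free) and $H\in\Gamma\bigwedge^{k+1}A^*$ is $d_A$-closed. Thus the set of such $Q$-structures is naturally identified with the set of closed $(k{+}1)$-forms $Z^{k+1}(A)=\{H\mid d_A H=0\}$.

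Next I would make precise what ``parametrized up to equivalence'' means and show the equivalence relation is exactly $H\sim H'\iff H-H'\in B^{k+1}(A)$, where $B^{k+1}(A)=d_A\bigl(\Gamma\bigwedge^k A^*\bigr)$ is the space of exact forms. The natural notion of equivalence here is $Q$-isomorphism realized by a symplectomorphism of $T^*[k]A[1]$ that covers the identity on $A[1]$ (so that the $Q$-morphism-to-$A[1]$ structure is respected). Proposition~\ref{symetries} supplies, for each $B\in\Gamma\bigwedge^k A^*$, a symplectomorphism $\tau^B$ sending $X_{\theta_H}$ to $X_{\theta_{H+d_A B}}$; this shows that if $H'-H=d_A B$ then $\theta_H$ and $\theta_{H'}$ are equivalent, giving a well-defined surjection $Z^{k+1}(A)/B^{k+1}(A)=H^{k+1}(A)\twoheadrightarrow\{\text{$Q$-structures}\}/{\sim}$.

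The main work, and the step I expect to be the real obstacle, is injectivity: I must argue that \emph{every} equivalence between $\theta_H$ and $\theta_{H'}$ (of the allowed type) forces $H-H'$ to be $d_A$-exact, so that no further collapsing occurs beyond that produced by the $\tau^B$. Concretely, suppose $\Phi$ is a symplectomorphism of $T^*[k]A[1]$ covering the identity on $A[1]$ with $\Phi_*X_{\theta_H}=X_{\theta_{H'}}$. Since $\Phi$ fixes the body and induces the identity on $A[1]$, its action on functions is determined in low degrees, and by analyzing its effect degree by degree (using the generators described in Proposition~\ref{VBchar} and the fact that a symplectomorphism of a degree-$k$ manifold of this type, covering the identity, must be close to the form $\tau^B$) I would show $\Phi=\tau^B$ for some $B\in\Gamma\bigwedge^k A^*$, possibly after composing with an automorphism that preserves $\theta$ and acts trivially on the relevant cohomology class. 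Then $\Phi_*\theta_H=\theta_{H+d_AB}$ forces $H'=H+d_A B$, i.e. $[H]=[H']$ in $H^{k+1}(A)$. The delicate point is ruling out exotic symplectomorphisms: one must use that $p$ being a $Q$-morphism pins down the Lie algebroid part, so the only remaining freedom sits in the fibre directions and is governed precisely by the $B$-field shifts. Assembling these two implications yields the claimed bijection with $H^{k+1}(A)$.
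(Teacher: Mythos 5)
Your first two paragraphs are exactly the paper's argument: the corollary is stated there without separate proof, as an immediate consequence of Theorem~\ref{Q>3} (the condition that $p$ be a $Q$-morphism fixes the Lie algebroid part $\theta$, leaving only $H\in Z^{k+1}(A)$ free) together with Proposition~\ref{symetries} (the symmetries $\tau^B$ realize the shifts $H\mapsto H+d_AB$); implicitly the paper takes ``equivalence'' to be the relation generated by these symmetries, so nothing beyond your second paragraph is required for the statement as intended. Your third paragraph addresses a strictly stronger reading, namely equivalence by an arbitrary symplectomorphism covering the identity on $A[1]$, and this is where your argument is only a sketch; the good news is that the claim is true and completable along the lines you indicate, with no need for the hedge about ``composing with an automorphism that preserves $\theta$.'' Concretely: since $\Phi$ covers the identity, $\Phi^*$ fixes $x^i$ and $\alpha^j$, and by degree counting (for $k>2$) the most general form is $\Phi^*a_j=c^l_j(x)a_l+f_j(x,\alpha)$ and $\Phi^*p_i=d^m_i(x)p_m+e^{jl}_i(x)\alpha^ja_l+g_i(x,\alpha)$; preservation of the canonical bracket relations forces $c=\id$, $d=\id$, $e=0$, and forces the degree-$k$ one-form $\lambda=f_j\,d\alpha^j+g_i\,dx^i$ on $A[1]$ to be closed. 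Since the de Rham cohomology of $A[1]$ agrees with that of $M$ and is concentrated in internal degree $0$, any closed one-form of positive degree is exact, so $\Phi=\tau^B$ for some $B\in\Gamma\bigwedge^kA^*$, and then $\Phi_*X_{\theta_H}=X_{\theta_{H'}}$ forces $H'-H=d_AB$. So your route, once completed, buys a sharper statement than the paper's: the classes in $H^{k+1}(A)$ do not collapse further even under the full group of fibre-preserving symplectomorphisms, whereas the paper only needs (and only uses) the exact shifts $\tau^B$.
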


\subsection{Brackets and higher Dirac structures}
Given a Lie algebroid $\Alie$ and $H\in\Gamma\bigwedge^{k+1}A^*$ with $d_AH=0$, consider the symplectic $Q$-manifold $(T^*[k]A[1],\{\cdot,\cdot\},\theta_H)$. We can use the derived bracket formalism with respect to the symplectic bracket and $\theta_H$ to obtain a bracket on sections of $A\oplus \bigwedge^{k-1}A^*$ by the following formula:
$$\{\{\cdot,\theta_H\},\cdot\}: C^{k-1}_{T^*[k]A[1]}\times C^{k-1}_{T^*[k]A[1]}\to C^{k-1}_{T^*[k]A[1]},$$
where we use formulas \eqref{functions} and \eqref{VBchareq} to identify 
$$C^{k-1}_{T^*[k]A[1]}=\fX^{-1}(A[1])\oplus C^{k-1}_{A[1]}=\Gamma(A\oplus \bigwedge\nolimits^{k-1}A^*).$$ 
Therefore, the above formula defines a bracket $\cbrack{\cdot}{\cdot}_H:\Gamma(A\oplus \bigwedge^{k-1}A^*)\times\Gamma(A\oplus \bigwedge^{k-1}A^*)\to \Gamma(A\oplus \bigwedge^{k-1}A^*)$ that in classical terms is just
\begin{equation}\label{Cbrack}
\begin{array}{lll}
\cbrack{a+\omega}{b+\eta}_H&=&\{\{\widehat{a}+\omega,\theta+H\},\widehat{b}+\eta\}=\{\left[\widehat{a},\theta\right]+\widehat{a}(H)+d_A\omega,\widehat{b}+\eta\}\\
&=&\left[\left[\widehat{a},\theta\right],\widehat{b}\right]+\left[\widehat{a},\theta\right](\eta)-\widehat{b}(\widehat{a}(H))-\widehat{b}(d_A\omega)\\
&=&\left[a,b\right]+\cL_a\eta-i_bd_A\omega-i_bi_aH,
\end{array}
\end{equation}
where $a,b\in \Gamma A,\ \omega,\eta\in\Gamma \bigwedge^{k-1}A^*$, and $\widehat{a},\widehat{b}\in\fX^{-1}(A[1])=\Gamma A$ denote the vector fields induced by $a$ and $b$, respectively.
\begin{remark}
For $A=TM$, these brackets have been previously  considered e.g. in \cite{gua:gen, hag:nam, hit:gen}. These works also explain the $H$-twist and the symmetries $\tau^{B}$. The connection with the graded manifolds $T^*[k]T[1]M$ was noticed in \cite{bou:aks, zam:inf}. 
\end{remark}

\begin{definition}
Let $\Alie$ be a Lie algebroid and $H\in\Gamma\bigwedge^{k+1}A^*$ with $d_A H=0$. For $k>2$, a \emph{higher Dirac} structure on $(A\oplus \bigwedge^{k-1}A^*, \langle\cdot, \cdot\rangle, \cbrack{\cdot}{\cdot}_H, \rho)$ is a vector subbundle $L\to N$ of $(A\oplus \bigwedge^{k-1}A^*)_{|N}$ over a submanifold $N\subseteq M$ satisfying equations \eqref{L0}, \eqref{L2}, \eqref{L1} and
 $$\rho(L)\subset TN\quad \text{and}\quad \cbrack{\Gamma L}{\Gamma L}_H\subseteq\Gamma L.$$
 $N$ is called the body or ``support'' of the higher Dirac structure ( See e.g. \cite{bur:sup} for Dirac structures with support in the ordinary sense).
\end{definition}

\begin{theorem}\label{Nan-Di-Qlag}
	Let $(A\to M,[\cdot,\cdot],\rho)$ be a Lie algebroid and $H\in\Gamma\bigwedge^{k+1}A^*$ with $d_A H=0$. For $k>2$, there is a one to one correspondence:
	\begin{equation*}
	\left\{\begin{array}{cc}
	\text{Higher Dirac structures}\\
	\text{on } (A\oplus \bigwedge\nolimits^{k-1}A^*, \langle\cdot,\cdot\rangle, \cbrack{\cdot}{\cdot}_H, \rho)
\end{array}	 \right\}\rightleftharpoons\left\{\begin{array}{cc}
\text{Lagrangian Q-submanifolds}\\
\text{of }(T^*[k]A[1],\{\cdot,\cdot\},X_{\theta_H})
\end{array}\right\}
	\end{equation*}
\end{theorem}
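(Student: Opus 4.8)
The plan is to exploit Corollary~\ref{charLagrangian}, which already identifies the underlying object on both sides: a Lagrangian submanifold of $T^*[k]A[1]$ and a higher Dirac structure are both given by a subbundle $L\to N$ of $(A\oplus\bigwedge^{k-1}A^*)_{|N}$ satisfying \eqref{L0}, \eqref{L2} and \eqref{L1}. Thus the theorem reduces to showing that, for such an $L$ with associated ideal $\cI\subseteq C_{T^*[k]A[1]}$ (described explicitly in the table following Proposition~\ref{cotsub}, with $D=p_1(L)^\circ$ and $\widehat F=TN$), the $Q$-submanifold condition $X_{\theta_H}(\cI)\subseteq\cI$ is equivalent to the two remaining requirements $\rho(L)\subseteq TN$ and $\cbrack{\Gamma L}{\Gamma L}_H\subseteq\Gamma L$. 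Since $X_{\theta_H}$ is a derivation, I would verify $X_{\theta_H}(\cI)\subseteq\cI$ on ideal generators, using $\theta_H=\theta+H$ together with the classical formulas for the symplectic bracket.

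The degree-$0$ generators $f\in\cI^0=Z(N)$ give $X_{\theta_H}(f)=d_Af$, and demanding $d_Af|_N\in p_1(L)^\circ$ for all such $f$ is exactly $\rho(p_1(L))\subseteq TN$, i.e. $\rho(L)\subseteq TN$. The heart of the argument is the degree-$(k-1)$ generators: for $s=a+\omega\in\widehat\Gamma L$ a direct computation, as in \eqref{Cbrack}, gives
$$X_{\theta_H}(s)=\{\theta+H,\widehat a+\omega\}=[\widehat a,\theta]+(d_A\omega+i_aH)\in\Gamma(\bA_A\oplus\bigwedge\nolimits^kA^*),$$
whose $\bA_A$-part $[\widehat a,\theta]$ is a Lie-derivative derivation with symbol $\rho(a)$, and whose $\bigwedge^kA^*$-part is $\mu:=d_A\omega+i_aH$. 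The key observation is that, under the operation $\Upsilon$ of the classical description of the symplectic bracket, this element acts by $\Upsilon_{X_{\theta_H}(s)}(b+\eta)=\cbrack{a+\omega}{b+\eta}_H$, precisely the bracket \eqref{Cbrack}. Comparing with the classification of $K$ in Theorem~\ref{coisochar} (Lagrangian case: $D=p_1(L)^\circ$, $\widehat F=TN$, trivial connection), the tangency $X_{\theta_H}(s)|_N\in K$ unpacks into: the symbol $\rho(a)\in TN$; the derivation preserves $p_1(L)$ and $p_1(L)^\circ$; $\mu\in p_1(L)^\circ\wedge\bigwedge^{k-1}A^*$; and $\Upsilon_{X_{\theta_H}(s)}$ preserves $L$. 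The last condition is literally $\cbrack{\Gamma L}{\Gamma L}_H\subseteq\Gamma L$, and I would show the others follow from it together with $\rho(L)\subseteq TN$: since $p_1\cbrack{a+\omega}{b+\eta}_H=[a,b]$, involutivity forces $p_1(L)$ to be an involutive subbundle (hence the derivation preserves $p_1(L)$ and, being in the Atiyah algebroid, also $p_1(L)^\circ$), and contracting $\mu$ repeatedly with sections of $p_1(L)$ and using involutivity of the form-component yields $\mu\in p_1(L)^\circ\wedge\bigwedge^{k-1}A^*$.

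It remains to treat the generators in degrees $1$ and $k$, and here I would argue that they impose no further constraint. The point is that, because $\cN$ is coisotropic (being Lagrangian), its ideal is recovered degree by degree from $\cI^0$ and $\cI^{k-1}$ through the symplectic bracket; for instance $\cI^1=\{\alpha\in C^1:\{\alpha,\cI^{k-1}\}\subseteq\cI^0\}$, and by Lagrangian maximality $\cI^k$ coincides with the Poisson normalizer $\{\xi\in C^k:\{\xi,\cI^0\}\subseteq\cI^0,\ \{\xi,\cI^1\}\subseteq\cI^1,\ \{\xi,\cI^{k-1}\}\subseteq\cI^{k-1}\}$. Since $X_{\theta_H}$ satisfies \eqref{Q-poisson} and $X_{\theta_H}^2=0$ (equivalently $\{\theta_H,\theta_H\}=0$), tangency on the degree-$0$ and degree-$(k-1)$ generators propagates to these remaining degrees by the graded Jacobi identity: applying $X_{\theta_H}$ to the bracket identities characterizing $\cI^1$ and $\cI^k$ and distributing via \eqref{Q-poisson} lands the result back in $\cI$.

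I expect the main obstacle to be precisely this bookkeeping: translating the several defining conditions for $K$ in Theorem~\ref{coisochar} into the single involutivity requirement, and making the degree-$1$ and degree-$k$ propagation rigorous. The identity $\Upsilon_{X_{\theta_H}(s)}=\cbrack{s}{\cdot}_H$ is what makes the crucial degree-$(k-1)$ step transparent and turns the verification into a matter of matching each piece of $K$ against the two higher-Dirac conditions.
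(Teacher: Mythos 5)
Your overall architecture does match the paper's: reduce by Corollary~\ref{charLagrangian}, convert tangency of $X_{\theta_H}$ into Poisson-bracket conditions on the ideal $\cI$, and exploit the maximality of Lagrangian ideals (the paper states this once, as $f\in\cI^i\Leftrightarrow X_f(\cI)\subseteq\cI$ for all $i\in\bN^*$, and then obtains the whole theorem from the chain $Q(\cI)\subseteq\cI\Leftrightarrow\{\{f,\theta_H\},g\}\in\cI$ for all $f,g\in\cI$, never needing the explicit description of $K$). Your degree-$0$ step, the identity $\Upsilon_{X_{\theta_H}(s)}=\cbrack{s}{\cdot}_H$, and the Jacobi propagation in degrees $1$ and $k$ are sound; in particular, your formulation of the normalizer characterizations as testing only against $\cI^0,\cI^1,\cI^{k-1}$ is exactly what avoids circularity in the degree-$k$ step (testing against $\cI^k$ as well would require the very tangency being proved).

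There is, however, a genuine flaw in your degree-$(k-1)$ unpacking. You split the condition $X_{\theta_H}(s)|_N\in K$ into four separate conditions, one of which is that the form part $\mu=d_A\omega+i_aH$ lies in $p_1(L)^\circ\wedge\bigwedge^{k-1}A^*$, and you propose to derive this from involutivity. That condition is not part of the characterization of $K$, is not implied by the higher Dirac conditions, and fails in basic examples: take $H=0$ and $L$ the graph of a closed form $\omega\in\Gamma\bigwedge^kA^*$, so $N=M$, $p_1(L)=A$ and $p_1(L)^\circ=0$. This is a higher Dirac structure, hence a $Q$-lagrangian, yet for $s=a+i_a\omega$ the form part of $X_\theta(s)$ is $d_A i_a\omega=\cL_a\omega$, which is generally nonzero while $p_1(L)^\circ\wedge\bigwedge^{k-1}A^*=0$. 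The underlying issue is that membership in $K$ cannot be decoupled into separate conditions on the derivation part and the form part: the action $\Upsilon_{(\cD,\mu)}(b+\eta)=\cD(b+\eta)-i_b\mu$ mixes the two, and the correct unpacking (by the same Lagrangian maximality you invoke for $\cI^k$) consists of exactly three conditions: the symbol lies in $TN$, the action $\Psi$ preserves $\cI^1$, and the action $\Upsilon$ preserves $\cI^{k-1}$. Once the spurious form condition is dropped, your degree-$(k-1)$ step reduces precisely to the anchor condition together with $\cbrack{\Gamma L}{\Gamma L}_H\subseteq\Gamma L$, and the rest of your argument goes through, giving the paper's proof in expanded, generator-by-generator form.
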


\begin{proof}
	By Corollary \ref{charLagrangian} the only thing that remains to prove is that $\rho(L)\subset TN$ and $\cbrack{\Gamma L}{\Gamma L}_H\subseteq \Gamma L$ if and only if $Q=X_{\theta_H}$ is tangent to the lagrangian submanifold. Denote by $\cI$ the sheaf of ideals that defines the lagrangian submanifold. Recall that on a lagrangian submanifold 
	\begin{equation*}
		 \ f\in\cI^i\Leftrightarrow X_f(\cI)\subseteq \cI\quad \forall  i\in\bN^*.
\end{equation*}		
Therefore	 
	\begin{equation*}
		\begin{array}{lll}
		Q(\cI)\subseteq \cI &\Leftrightarrow& X_{Q(f)}(\cI)\subseteq \cI \quad \forall f\in\cI\\
		&\Leftrightarrow& \{\{f,\theta_H\},g\}\in \cI\quad\forall f,g\in\cI\\
		&\Leftrightarrow&\rho(L)\subseteq TN \text{ and } \cbrack{\Gamma L}{\Gamma L}_H\subseteq \Gamma L.
		\end{array}\qedhere
	\end{equation*}		
\end{proof}

\begin{corollary}
Given a lagrangian submanifold of $(T^*[k]A[1],\{\cdot,\cdot\},\theta_H)$ equivalent to the pair $(E\to N, \Omega)$ then $X_{\theta_H}$ is tangent to it if and only if $E\to N$ is a subalgebroid of $A\to M$ and $d_E\Omega=j^*H$.
\end{corollary}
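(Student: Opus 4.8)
The plan is to invoke Theorem \ref{Nan-Di-Qlag}, which reduces the tangency of $X_{\theta_H}$ to the lagrangian submanifold to the two conditions $\rho(L)\subseteq TN$ and $\cbrack{\Gamma L}{\Gamma L}_H\subseteq\Gamma L$, and then to translate each of these into the stated conditions on $(E\to N,\Omega)$ using the explicit description of $L$ from Corollary \ref{charLag}, namely $L=\{e+w\mid e\in\Gamma E,\ i_e\Omega=j^*w\}$. First I would observe that $p_1(L)=E$, so $\rho(L)\subseteq TN$ is exactly $\rho(E)\subseteq TN$. Next, for sections $a+\omega,b+\eta\in\Gamma L$ (so $a,b\in\Gamma E$ and $j^*\omega=i_a\Omega$, $j^*\eta=i_b\Omega$), the $A$-component of the bracket \eqref{Cbrack} is $[a,b]$; since $p_1(\Gamma L)=\Gamma E$, the closure $\cbrack{\Gamma L}{\Gamma L}_H\subseteq\Gamma L$ forces $[a,b]$ to restrict to a section of $E$ over $N$. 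Together with $\rho(E)\subseteq TN$ this is precisely the statement that $E\to N$ is a subalgebroid of $A\to M$; conversely the subalgebroid condition delivers both $\rho(E)\subseteq TN$ and the closure of the $A$-component.

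The remaining, and main, point is to show that---once $E$ is a subalgebroid---the membership of the $\bigwedge^{k-1}A^*$-component of the bracket in $L$ is equivalent to $d_E\Omega=j^*H$. Because $E$ is a subalgebroid, the inclusion $j\colon E[1]\to A[1]$ is a $Q$-morphism, so $j^*$ intertwines $d_A$ and $d_E$; and by the commuting square in the proof of Corollary \ref{charLag} it also commutes with $i_a$ for $a\in\Gamma E$. Writing out the condition for $\cbrack{a+\omega}{b+\eta}_H$ to lie in $L$, namely
\[
i_{[a,b]}\Omega=j^*\bigl(\cL_a\eta-i_b d_A\omega-i_b i_a H\bigr),
\]
and pushing $j^*$ through these compatibilities together with $j^*\omega=i_a\Omega$ and $j^*\eta=i_b\Omega$, I would turn the right-hand side into an expression purely in the Cartan calculus of $E$. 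Applying on $E$ the identities $i_{[a,b]}=\cL_a i_b-i_b\cL_a$ and $\cL_a=i_a d_E+d_E i_a$ and cancelling the common terms, the whole condition collapses to
\[
i_b i_a\, d_E\Omega=i_b i_a\, j^*H\qquad\text{for all }a,b\in\Gamma E.
\]
Since $d_E\Omega$ and $j^*H$ are forms of degree $k+1\geq 2$, and a form of degree $\geq 2$ is determined by all its double contractions $i_b i_a$, this is in turn equivalent to $d_E\Omega=j^*H$.

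I expect the Cartan-calculus cancellation to be the only delicate step. One must verify that every use of ``$j^*$ commutes with $d_A$ and with $i_a$'' is legitimate, which is exactly why the subalgebroid condition has to be secured first; and one must check that the reduction does not depend on the chosen lifts $\omega,\eta$ of $i_a\Omega,i_b\Omega$, which holds because after applying $j^*$ only $i_a\Omega$ and $i_b\Omega$ survive. Assembling the two directions then yields the equivalence: $X_{\theta_H}$ is tangent to the lagrangian $\iff$ $E\to N$ is a subalgebroid of $A\to M$ and $d_E\Omega=j^*H$.
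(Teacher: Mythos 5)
Your proposal is correct and follows essentially the same route as the paper's proof: reduce via Theorem \ref{Nan-Di-Qlag}, extract the subalgebroid condition from the anchor condition and the $A$-component of the bracket, then use that $j^*$ is a chain map commuting with contractions to turn the $\bigwedge^{k-1}A^*$-component condition into $d_E\Omega=j^*H$. The only difference is that you spell out the Cartan-calculus cancellation and the lift-independence, which the paper leaves implicit.
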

\begin{proof}
Denote by $j:E\to A$ the inclusion and recall that
\begin{equation*}
L=\{e+w\in \Gamma(A\oplus\bigwedge\nolimits^{k-1}A^*)_{|N}\ |\ e\in\Gamma E \text{ and } i_e\Omega=j^*w\}
\end{equation*}
with $\rho(E)\subseteq TN$  and  $\cbrack{\Gamma L}{\Gamma L}_H\subseteq \Gamma L$. The second condition implies that, $\forall e+w, \ e'+w'\in\Gamma L$,
\begin{equation*}
\cbrack{e+w}{e'+w'}_H=[e,e']+\cL_e w'-i_{e'}d_Aw-i_{e'}i_e H\in \Gamma L,
\end{equation*}
so $\rho(E)\subseteq TN$ and, $\forall e, e'\in\Gamma E$, $[e,e']\in\Gamma E$ if and only if $E\to N$ is a Lie subalgebroid. In particular this condition implies that $j^*d_A=d_Ej^*$, so $j^*$ is a chain map and therefore
\begin{equation*}
i_{[e,e']}\Omega=j^*(\cL_e w'-i_{e'}d_Aw-i_{e'}i_e H)\Leftrightarrow d_E\Omega= j^*H.\qedhere
\end{equation*}
\end{proof}

\begin{example} 
Consider the zero section $N^*[k]A[1]$, that by definition is a $Q$-lagrangian inside $(T^*[k]A[1],\{\cdot,\cdot\},X_\theta)$. Given $\omega\in\Gamma\bigwedge^{k}A^*$ with $d\omega= H$ we could take the symplectomorphism $\tau^{\omega}$ that sends $X_\theta$ to $X_{\theta_H}$ and $N^*[k]A[1]$ to $L=graph(\omega)$. Therefore $L$ is a $Q$-lagrangian submanifold inside $(T^*[k]A[1],\{\cdot,\cdot\},X_{\theta_H})$.
\end{example}

Recall that decomposable tensors, see Example \ref{nambu}, correspond to weak lagrangian submanifolds. To finish this section, we analyze their involutivity condition.

\begin{definition}
Let $A\to M$ be a Lie algebroid and $H\in\Gamma\bigwedge^{k+1}A^*$ with $d_A H=0$. Consider $\Pi\in\Gamma\bigwedge^k A$ a decomposable tensor. We say that it defines an \emph{$H-$twisted Nambu structure} if
\begin{equation}\label{int-nan}
		(\cL_{\Pi(w)}\Pi)(w')=-\Pi\big(i_{\Pi(w')}d_Aw+i_{\Pi(w')}i_{\Pi(w)}H\big)\qquad \forall w,w'\in\Gamma\bigwedge\nolimits^kA^*.
	\end{equation}
\end{definition}

	In \cite{wad:nam} Wade introduced this equation for general Lie algebroids and here we extend it with the $H$-twist. Wade also observed that the decomposability of the tensor does not follow from equation \eqref{int-nan}, so we must impose it in order to obtain a weak lagrangian submanifold.

\begin{example}[$H$-twisted Nambu tensor]
	Let $L=graph(\Pi)$ where $\Pi\in\Gamma\bigwedge^k A$ is a decomposable tensor. We claim that $L$ is a higher Dirac structure on  $(A\oplus\bigwedge^{k-1}A^*,\langle\cdot,\cdot\rangle,\cbrack{\cdot}{\cdot}_H)$ if and only if $\Pi$ is an $H-$twisted Nambu structure. The proof follows from the following computation for all $w,w'\in\Gamma\bigwedge^{k-1} A^*$:
	\begin{equation*}
	\begin{array}{lcl}
	[\Pi(w),\Pi(w')]&=&\cL_{\Pi(w)}\left(\Pi(w')\right)\\
	&=&\left(\cL_{\Pi(w)}\Pi\right)w'+\Pi\left(\cL_{\Pi(w)}w'\right)\\
	&=&-\Pi\big(i_{\Pi(w')}d_Aw+i_{\Pi(w')}i_{\Pi(w)}H\big)+\Pi\left(\cL_{\Pi(w)}w'\right)\\
	&=&\Pi\left(\cL_{\Pi(w)}w'-i_{\Pi(w')}d_Aw-i_{\Pi(w')}i_{\Pi(w)}H\right).
	\end{array}
	\end{equation*}  
\end{example}

\section{Semi-direct product of Lie algebroids with $2$-term representations up to homotopy}\label{S5}

We will see in this section that $T^*[k]A[1]$, when $A\to M$ is a Lie algebroid, has the structure of a semi-direct product, extending the well known Lie algebra isomorphism between $T^*\fg$ and $\fg\ltimes\fg^*$, where $\fg\ltimes\fg^*$ stands for the semi-direct product of $\fg$ with their coadjoint representation.

In order to understand the adjoint and coadjoint representations of a Lie algebroid, Abad$-$Crainic and Gracia-Saz$-$Mehta introduced the notion of $2$-term representation up to homotopy, see \cite{cam:rep, gra:vb}. In the literature, we find two different viewpoints to the semi-direct product of a Lie algebroid with a $2$-term representation up to homotopy: The first one leads to VB-algebroids, see \cite{gra:vb}; the second produces an $L_2$-algebroid or, more generally, an $L_k$-algebroid, see \cite{chen:hig}. 

The goal of this section is to explain the relation between these two viewpoints. We will do that by expressing both in terms of graded $Q$-manifolds, concluding that they coincide up to ``splittings''. The relation is
schematically illustrated in the following diagram:
\begin{equation}\label{diagram}
	\begin{array}{ccccc}
	\left\{\begin{array}{c}
	VB\text{-algebroid}\\
	\text{ over } A\to M
	\end{array}\right\}& \xlongrightarrow[F_1]{[1]}&	\left\{\begin{array}{c}
	\text{Deg. } 1\ Q\text{-bundle}\\
	\text{ over } A[1]
	\end{array}\right\}&\xrightarrow[F_2]{[k]}&	\left\{\begin{array}{c}
	\text{Deg. } k \ Q\text{-bundle}\\
	\text{ over } A[1]
	\end{array}\right\}\\
	\begin{array}{c}
	\\
	S_1\downarrow{Splitting}\\
	\\
	\end{array}&&&&S_2\downarrow{Splitting}\\
	\left\{\begin{array}{c}
	2\text{-term rep. up}\\
	\text{ to homotopy } 
	\end{array}\right\}&\xrightarrow[G_1]{\ltimes[k]}&	\left\{\begin{array}{c}
	L_k\text{-algebroid }\\
	\text{extension of } A
	\end{array}\right\}&\xrightarrow[G_2]{[1]}&	\left\{\begin{array}{c}
	\text{Split deg. } k \\
	Q\text{-bundle over } A[1]
	\end{array}\right\}
	\end{array}
\end{equation} 

Morally, the diagram says that the functor ``$[1]$" (arrows ``$F_1$" and ``$G_2$"), that goes from the category of $L_k$-algebroids to the category of $Q$-manifolds, commutes with the functor ``$[k]$" (arrows ``$F_2$" and ``$G_1$"), which shifts by $k$ $2$-term representations up to homotopy or the fibres of a $Q$-bundle.

The arrows ``$S_1$'' and ``$F_1$'' are explained in \cite{gra:vb}, while the arrow ``$G_1$'' for $k=2$ is constructed in \cite{chen:hig}, and the next arrow ``$G_2$'' is from \cite{bon:on}. We will briefly review these constructions in this section and explain the remaining arrows. This also must be compared with \cite{raj:mod} where Mehta construct an arrow going from $Q$-bundles over $A[1]$ to $n$-term representations up to homotopy. 

In the previous diagram, starting with the VB-algebroid $T^*A$ for a given Lie algebroid $(A\to M, [\cdot,\cdot],\rho)$, the corresponding graded $Q$-manifold on the top row is  the shifted cotangent bundle $(T^*[k]A[1],\{\cdot,\cdot\},\theta)$ that we studied in the previous sections. In this case, we also explain how to incorporate the $H$-twisted $Q$-structures of Theorem \ref{Q>3}.

\subsection{2-term representations up to homotopy}

Following \cite{cam:rep}, given a Lie algebroid $\Alie$ a \emph{2-term representation up to homotopy} is defined by:
\begin{enumerate}
	\item Two vector bundles $E_0,\ E_1$ over $M$ and a vector bundle map $\partial: E_0\to E_1$ over the identity.
	\item An $A-$connection on each vector bundle, $\nabla^0$ and $\nabla^1$, satisfying $\partial\circ\nabla^0=\nabla^1\circ\partial$.
	\item A two form $K\in\Gamma\left(\bigwedge^2 A^* \otimes \Hom(E_1,E_0)\right)$ such that:
	\begin{equation*}
		F^{\nabla^0}=K\circ \partial,\quad F^{\nabla^1}=\partial\circ K \quad \text{and} \quad d^\nabla(K)=0,
	\end{equation*}
	where $F^{\nabla^0},F^{\nabla^1}$ denote the curvature of the respective connection.
\end{enumerate}

With this definition and the choice of an auxiliary connection on the vector bundle $A\to M$, Abad and Crainic were able to make sense of the adjoint and coadjoint representations up to homotopy of a Lie algebroid as follows.

\begin{example}[The adjoint representation up to homotopy]\label{adjoint}
Let $\Alie$ be a Lie algebroid and $\nabla:\fX(M)\times\Gamma A\to \Gamma A$ any connection on $A\to M$. Define the adjoint representation up to homotopy by:
\begin{enumerate}
	\item The vector bundles $E_0=A,\ E_1=TM$, and $\partial=\rho$.
	\item The connections:
	\begin{equation*}
		\begin{array}{ll}
			\nabla^0_a b=[a,b]+\nabla_{\rho(b)}a,& a,b\in \Gamma A,\\
			\nabla^1_a X=[\rho(a),X]+\rho(\nabla_X a,)& a\in\Gamma A, \ X\in\fX(M).		
		\end{array}			
	\end{equation*}	 
	\item The two form:
	$$K(a,b)(X)=\nabla_X[a,b]-[\nabla_X a, b]-[a,\nabla_X b]+\nabla_{\nabla^1_a X}b-\nabla_{\nabla^1_b X}a, \qquad a,b\in\Gamma A, \ X\in\fX(M).$$
\end{enumerate}  
We denote by $D^{ad}$ the degree $1$ differential operator associated to the adjoint representation up to homotopy. 
\end{example}

\begin{example}[The coadjoint representation up to homotopy]\label{coadjoint}
	We define it as the dual representation of the adjoint:
\begin{enumerate}
	\item The vector bundles $E_{0}=T^*M,\ E_1=A^*$, and $\partial=\rho^*$.
	\item The connections:
	\begin{equation*}
		\begin{array}{ll}
			\langle\nabla^{1}_a \beta, b\rangle=\rho(a)\langle\beta,b\rangle-\langle \beta,\nabla_a^0 b\rangle,& a,b\in \Gamma A, \ \beta\in\Gamma A^*,\\
			\langle\nabla^0_a \tau, X\rangle=\rho(a)\langle\tau,X\rangle-\langle \tau,\nabla_a^1 X\rangle,& a\in \Gamma A,\ X\in\fX(M), \ \tau\in\Omega^1(M).	
		\end{array}			
	\end{equation*}	
	where the connections on the right are the ones coming from the adjoint representation.
	\item The two form $K^*=K$, Since $\Hom(TM, A)=\Hom(A^*,T^*M)$. 
\end{enumerate}  
We denote by $D^{ad^*}$ the differential operator associated with the coadjoint representation. 
\end{example}
Observe that these definitions of the adjoint and coadjoint representations up to homotopy depend on the choice of an auxiliary connection on $A\to M$. Therefore, they are not unique but it was proved in \cite{cam:rep} that they are all quasi-isomorphic.

\subsection{VB-algebroids and Q-bundles}
Once we know what a $2$-term representation up to homotopy is, we introduce the first approach to semi-direct products.
 
A \emph{VB-algebroid} is a double vector bundle, $D\to E$ over $A\to M$, equipped with a Lie algebroid structure on $D\to E$ compatible with the vector bundle structure of $D\to A$, see \cite{gra:vb} for a complete definition and compare with the original one of an LA-vector bundle \cite{mac:dou}. In this case, $A\to M$ inherits a natural Lie algebroid structure.

On a double vector bundle, $D\to E$ over $A\to M$, we denote the space of sections of $D$ over $E$ by $\Gamma(D,E)$. There are two special types of sections, called \emph{linear}, $\Gamma_l(D,E)=\{S\in\Gamma(D,E) \ | S \text{ is a vector bundle map }\}$, and \emph{core} $\Gamma_c(D,E)=\{S\in\Gamma(D,E) \ | S \text{ covers the zero section and constant on the fibres } \}$. The space of sections $\Gamma(D,E)$ is generated by linear and core sections as a $C^\infty(M)$-module.
Given a double vector bundle define the \emph{core} bundle, $C\to M$, as the intersection of the kernels of the two projections and the \emph{fat} bundle, $\widehat{A}\to M$ is a vector bundle that fits in the exact sequence
\begin{equation}\label{fat}
\Hom(E,C)\to \widehat{A}\to A
\end{equation}
and with $\Gamma \widehat{A}$ being isomorphic to the linear sections, $\Gamma_l(D, E)$. It also holds that $\Gamma C$ is isomorphic to the space of core sections, $\Gamma_c(D,E)$.

Any $2$-term representation up to homotopy of $(A\to M,[\cdot,\cdot],\rho)$ defines a $VB$-algebroid on $A\oplus C\oplus E\to E$ over $A\to M$. A Theorem of \cite{gra:vb} says that, given a $VB$-algebroid any splitting of \eqref{fat} defines a $2$-term representation up to homotopy of $A\to M$ on $E_0=C, \ E_1=E$. This explains the arrow ``$S_1$'' in \eqref{diagram}.

\begin{example}\label{VBCo-Tan}
Given any Lie algebroid $(A\to M,[\cdot,\cdot],\rho)$, the tangent and cotangent prolongations, $TA$ and $T^*A$, are examples of VB-algebroids. In both cases the fat bundle is the jet prolongation bundle of $A$, $\widehat{A}=J^1A$, and a splitting of \eqref{fat} is the same as a connection on $A\to M$. The representations up to homotopy obtained from the tangent and cotangent prolongations are the adjoint and coadjoint representations as defined in Examples \ref{adjoint} and \ref{coadjoint}, see also \cite{gra:vb}.
\end{example}

The next step is to describe VB-algebroids in terms of graded manifolds. For that, we need the following definition.

Let $p:\cE\to \cM$ be a vector bundle in the category of graded manifolds, see \cite{kot:char, raj:tes}. We say that it is a \emph{$Q$-bundle} if $(\cE, Q)$ is a $Q$-manifold and the vector field $Q$ is $p$-projectable, i.e. there exists $\widehat{Q}\in\fX^1(\cM)$ such that the following diagram commutes:
	\begin{equation*}
		\begin{array}{ccc}
		T[1]\cE&\xrightarrow{dp} &T[1]\cM\\
		Q\uparrow&&\uparrow\widehat{Q}\\
		\cE&\xrightarrow{p}&\cM
		\end{array}
	\end{equation*}

As proved in \cite{gra:vb} a double vector bundle $D$ is a VB-algebroid if and only if $p:D[1]_E\to A[1]$ is a $Q$-bundle and $Q$ is linear on the fibres of $p:D[1]_E\to A[1]$. Here the subscript on the shifting indicates the base of the vector bundle for which we are shifting the fibre coordinates. This gives to ``$F_1$'' in \eqref{diagram}.

\begin{remark}
The idea that $Q$-bundles over $A[1]$ with linear vector fields were representations of the Lie algebroid $(A[1], \widehat{Q})$ appears in \cite{vai:lie} and was developed in \cite{raj:mod}.
\end{remark}

Let us define the arrow ``$F_2$'' of \eqref{diagram}. Suppose that $p:\cE\to \cM$ is a vector bundle in the category of graded manifolds. We can define a new graded manifold $\cE[k]_\cM$, that also defines a vector bundle over $\cM$, where the fibre coordinates are shifted by $k$. 

If $p:\cE\to \cM$ is a $Q$-bundle with $Q$ linear on the fibres, then $p:\cE[k]_\cM\to \cM$ is also a $Q$-bundle. This follows from the fact that since $Q\in\fX^1(\cE)$ is $p$-projectable and linear on the fibres, locally it can be written as
\begin{equation*}
 Q=f^i(x) \dfrac{\partial}{\partial x^i}+ g_j^l(x) e^j \dfrac{\partial}{\partial e_l}\quad \text{ with }|f^i|-|x^i|=1 \quad |g_j^l|+|e^j|-|e^l|=1,
\end{equation*}
where $f^i(x),g_j^l(x)\in C_\cM, \ \{x^i\}$ are local coordinates of $\cM$ and $\{ e^j\}$ fibre coordinates of $\cE$. So once we shift the degree of the  coordinates $e^j$ by $k$, the same expression defines a vector field on $\cE[k]_\cM$ that is $p$-projectable, linear on the fibres, of degree $1$ and squares to zero.

\begin{corollary}\label{VBtoQ-bundle}
Let $D\to E$ be a VB-algebroid over $A\to M$. For any $k\in\bN^*$, the graded manifolds  $p:D[1]_E[k]_{A[1]}\to A[1]$ define $Q$-bundles.
\end{corollary}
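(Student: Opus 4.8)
The plan is to chain together the two constructions that have already been established in the excerpt, so that Corollary~\ref{VBtoQ-bundle} follows by a purely formal argument with no new computation. First I would invoke the result of \cite{gra:vb} quoted immediately before the statement: a double vector bundle $D\to E$ over $A\to M$ is a VB-algebroid if and only if $p:D[1]_E\to A[1]$ is a $Q$-bundle whose homological vector field $Q$ is linear on the fibres of $p$. Thus the hypothesis of the corollary hands us, for free, a $Q$-bundle $p:D[1]_E\to A[1]$ with $Q$ fibrewise linear. This is the ``$F_1$'' arrow of diagram~\eqref{diagram}, and it converts the algebroid hypothesis into the graded-geometric data we actually want to manipulate.

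The second step is to apply the fibre-shifting construction ``$F_2$'' that was set up in the paragraph preceding the statement. There it is explained that if $p:\cE\to\cM$ is a $Q$-bundle with $Q$ linear on the fibres, then shifting the fibre coordinates by $k$ produces a new vector bundle $p:\cE[k]_\cM\to\cM$ which is again a $Q$-bundle. The key point, already recorded in the excerpt, is the local normal form
\begin{equation*}
Q=f^i(x)\frac{\partial}{\partial x^i}+g^l_j(x)\,e^j\frac{\partial}{\partial e_l},\qquad |f^i|-|x^i|=1,\quad |g^l_j|+|e^j|-|e^l|=1,
\end{equation*}
in which fibre-linearity forces each term to be linear in the $e$'s; shifting $|e^j|\mapsto|e^j|+k$ leaves the same formula well defined, still $p$-projectable, still fibre-linear, still of total degree $1$, and still squaring to zero. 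Taking $\cE=D[1]_E$ and $\cM=A[1]$, this immediately yields that $p:D[1]_E[k]_{A[1]}\to A[1]$ is a $Q$-bundle for every $k\in\bN^*$, which is exactly the assertion.

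So the proof is essentially the composite $F_2\circ F_1$: the VB-algebroid hypothesis gives a fibre-linear $Q$-bundle over $A[1]$, and fibre-shifting by $k$ preserves the $Q$-bundle structure. The only point that deserves a line of care—and what I expect to be the main (mild) obstacle—is checking that the two shiftings commute in the right order, i.e.\ that the iterated notation $D[1]_E[k]_{A[1]}$ is unambiguous: one must confirm that shifting the $A[1]$-fibre coordinates by $k$ does not disturb the $E$-directed $[1]$-shift used to form $D[1]_E$, since the subscripts record distinct bases. Because the two shifts act on disjoint sets of fibre coordinates (those of $D\to E$ versus those of the vector bundle over $A[1]$), they are independent, and the local expression for $Q$ above still applies verbatim after both shifts. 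Verifying $Q^2=0$ requires no recomputation, as the bracket of vector fields is insensitive to the integer grading shift of the coordinates it is expressed in. I would therefore present the argument as a two-sentence application of the cited VB-algebroid characterization followed by the fibre-shift lemma, with the degree bookkeeping being the only thing to spell out explicitly.
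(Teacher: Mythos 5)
Your proposal is correct and follows exactly the paper's intended argument: the corollary is obtained by composing the cited result of \cite{gra:vb} (arrow ``$F_1$'', giving the fibre-linear $Q$-bundle $D[1]_E\to A[1]$) with the fibre-shift construction ``$F_2$'' established in the paragraph immediately preceding the statement, using the same local normal form for $Q$. Your extra remark on the unambiguity of the iterated shifts is a harmless clarification of the paper's notational convention, not a deviation in method.
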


\begin{examples}
Given any Lie algebroid $(A\to M, \rho, [\cdot,\cdot])$ the tangent and cotangent prolongations have natural structures of VB-algebroids. So we can consider the manifolds $TA[1]_{TM}[k]_{A[1]}=T[k]A[1]$ and $T^*A[1]_{A^*}[k]_{A[1]}=T^*[k+1]A[1]$. By the previous result we obtain that
\begin{equation*}
p: (T[k]A[1],\cL_Q)\to (A[1],Q) \ \text{ and }\ q: (T^*[k+1]A[1],\cL_Q=X_\theta)\to (A[1],Q)
\end{equation*}
are $Q$-bundles. As we already mentioned in Example \ref{VBCo-Tan}, these $Q$-manifolds are related to the adjoint and coadjoint representation up to homotopy of the Lie algebroid $\Alie$.
\end{examples}
Before finishing this section, we give a definition of the graded manifolds $D[1]_E[k]_{A[1]}$ in terms of vector bundles. In the case of the cotangent prolongation it coincides with the one given for $T^*[k+1]A[1]$ in Proposition \ref{VBchar}.

\begin{proposition}\label{VBdesc_D[1][k]}
Given a double vector bundle $D\to E$ over $A\to M$, define for any $k\in\bN^*$ the degree $k+1$ manifold $D[1]_E[k]_{A[1]}=(M, C_{D[1]_E[k]_{A[1]}})$. Then

\begin{itemize}
	\item For $k=1$,  $C_{D[1]_E[1]_{A[1]}}$ is generated by:
	\begin{equation*}
	C^i_{D[1]_E[1]_{A[1]}}=\left\{\begin{array}{ll}
	C^\infty(M),& i=0, \\
	\Gamma(A^*\oplus E^*),& i=1,\\
	\Gamma(\wedge^2 A^*\oplus \widehat{C}^*\oplus \wedge^2 E^*),& i=2.
	\end{array}
	\right.
	\end{equation*}
			\item For $k>1$,  $C_{D[1]_E[k]_{A[1]}}$ is generated by:
	\begin{equation*}
	C^i_{D[1]_E[k]_{A[1]}}=\left\{\begin{array}{ll}
	C^\infty(M),& i=0, \\
	\Gamma(\wedge^i A^*),& i<k,\\
	\Gamma(E^*\oplus \wedge^k A^*),& i=k,\\
	\Gamma(\widehat{C}^*\oplus\wedge^{k+1}A^*),& i=k+1.
	\end{array}
	\right.
	\end{equation*}
\end{itemize} 
where $\Gamma\widehat{C}^*=\Gamma_l(D^{*_A}, C^*)$ and $D^{*_A}\to C^*$ denotes the dual double vector bundle over $A\to M$.
\end{proposition}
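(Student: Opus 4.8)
The plan is to compute the functions $C^i_{D[1]_E[k]_{A[1]}}$ degree by degree in adapted local coordinates, following the same method used for $T^*[k]A[1]$ in Proposition \ref{VBchar}, of which this is the generalization (the case $D=T^*A$, $E=A^*$, core $C=T^*M$). First I would fix a local trivialization of the double vector bundle adapted to both projections: base coordinates $x^i$ on $M$, fibre coordinates $\alpha^a$ linear along $A$ (of bidegree $(1,0)$ in the $(A,E)$-weights), $e^\mu$ linear along $E$ (bidegree $(0,1)$), and core coordinates $c^r$ (bidegree $(1,1)$). Shifting the $A$-linear coordinates by one yields $D[1]_E$, a degree $1$ manifold over $E$ whose $\bZ$-grading is the $A$-weight, so $|\alpha^a|=|c^r|=1$ and $|x^i|=|e^\mu|=0$. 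The $Q$-bundle projection $p:D[1]_E\to A[1]$ covers $E\to M$ and pulls back exactly $x^i$ and $\alpha^a$; hence its fibre coordinates are $e^\mu$ and $c^r$. Shifting these by $k$ gives the coordinates of $D[1]_E[k]_{A[1]}$ with degrees $|x^i|=0$, $|\alpha^a|=1$, $|e^\mu|=k$, $|c^r|=k+1$, so the manifold has degree $k+1$.

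Next I would read off the functions by counting monomials $\alpha^{(p)}e^{(q)}c^{(s)}$ with coefficients in $C^\infty(M)$, whose total degree after the shift is $p+qk+s(k+1)$. For $0<i<k$ neither an $e$ nor a $c$ can appear, so only the antisymmetric $\alpha$-monomials survive and $C^i=\Gamma\bigwedge^i A^*$. At degree $i=k$ one gets $\bigwedge^k A^*$ together with a single $e^\mu$, i.e. $\Gamma(E^*\oplus\bigwedge^k A^*)$. At degree $i=k+1$ the contributing monomials are $\alpha^{(k+1)}$ (giving $\bigwedge^{k+1}A^*$), a single core coordinate $c^r$ (giving a $C^*$-term), and the mixed products $\alpha^a e^\mu$ (giving a $\Hom(A,E^*)=A^*\otimes E^*$ term).

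The main obstacle is the last, global, step: showing that the $c$-linear and the $\alpha e$-terms assemble into a genuine section of the fat bundle $\widehat{C}^*$ rather than of the split bundle $C^*\oplus\Hom(A,E^*)$. Exactly as in the passage from $TM$ to $\bA_A$ in Proposition \ref{VBchar} (which is the present case $D=T^*A$, where $\widehat{C}^*=\bA_A$), the core coordinates $c^r$ are not invariantly defined: under a change of adapted frame they pick up cross terms proportional to $\alpha^a e^\mu$, and these transition rules are precisely those of the linear sections of the dual double vector bundle, i.e. of $\widehat{C}^*$ via the exact sequence $\Hom(A,E^*)\to\widehat{C}^*\to C^*$ and the identification $\Gamma\widehat{C}^*=\Gamma_l(D^{*_A},C^*)$ (a choice of splitting amounting to a connection). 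I would verify this by writing a degree $k+1$ function that is linear along the $c$ and $\alpha e$ directions as a fibrewise-linear function on $D$ and matching it with a linear section of $D^{*_A}$, obtaining $C^{k+1}=\Gamma(\widehat{C}^*\oplus\bigwedge^{k+1}A^*)$.

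Finally, for $k=1$ the degrees of $\alpha^a$ and $e^\mu$ coincide (both equal $1$) while $c^r$ has degree $2$; rerunning the same count gives $C^1=\Gamma(A^*\oplus E^*)$ and, at degree $2$, the split pieces $\bigwedge^2 A^*$ and $\bigwedge^2 E^*$ together with the $\widehat{C}^*$-term packaging $C^*$ with $\Hom(A,E^*)$, i.e. $C^2=\Gamma(\bigwedge^2 A^*\oplus\widehat{C}^*\oplus\bigwedge^2 E^*)$, again by the same frame-change analysis.
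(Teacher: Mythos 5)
Your proof is correct; note, however, that the paper states Proposition \ref{VBdesc_D[1][k]} with no proof at all, so the only available benchmark is its proof of the special case $D=T^*A$, namely Proposition \ref{VBchar}. There the argument is invariant: the function sheaf is identified through \eqref{functions} with symmetric powers of shifted vector fields on $A[1]$, and Proposition \ref{vfdeg1} converts $\fX^{-1}(A[1])$ and $\fX^{0}(A[1])$ into $\Gamma A$ and $\Gamma\bA_A$. Your route is local-to-global instead: adapted coordinates on the double vector bundle, a monomial count by total degree, and a transition-function argument for the top block. This is a genuinely different (and here necessary) substitute, since for a general double vector bundle there is no analogue of \eqref{functions} to invoke; what makes it work is the observation you use implicitly, namely that the two scalar multiplications (Euler vector fields) of $D$ survive both shifts, so coordinate changes preserve the bidegree $(p+s,q+s)$ of a monomial $\alpha^{(p)}e^{(q)}c^{(s)}$. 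This confines all mixing to the bidegree-$(1,1)$ block $\{c^r,\alpha^a e^\mu\}$ and forces the splittings $\bigwedge^{k+1}A^*\oplus\widehat{C}^*$ (and, for $k=1$, the extra summand $\bigwedge^2E^*$) to be canonical; it would be worth saying this explicitly rather than leaving it inside the phrase ``under a change of adapted frame they pick up cross terms proportional to $\alpha^a e^\mu$''.

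One caveat you should make explicit in the last step. Your exact sequence $\Hom(A,E^*)\to\widehat{C}^*\to C^*$ is the correct invariant content: it reproduces $\End(A)\to\bA_A\to TM$ when $D=T^*A$, $E=A^*$, $C=T^*M$, as consistency with Proposition \ref{VBchar} demands, and the bilinear functions on $D$ that you construct are precisely the linear sections of $D^{*_A}$ \emph{over} $A$, covering sections of $C^*\to M$. With the paper's stated convention that $\Gamma_l(D,E)$ means linear sections of the bundle $D\to E$ (which cover sections of the other side bundle), the formula $\Gamma\widehat{C}^*=\Gamma_l(D^{*_A},C^*)$ read literally would instead produce an extension of $A$ by $\Hom(C^*,E^*)$ --- the jet-type bundle $J^1A$ in the cotangent case --- which is not what Proposition \ref{VBchar} requires. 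So your verification matches against the right object, but since you quote the paper's notation verbatim, you should state the base of the linear sections explicitly ($D^{*_A}\to A$, covering $\Gamma C^*$) to make the identification unambiguous.
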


\subsection{$L_\infty$-algebroids and $Q$-manifolds}
The second approach to the semi-direct product of a Lie algebroid with a representation up to homotopy uses the concept of $L_\infty$-algebroids. 

	An \emph{$L_k-$algebroid} $(\mathbf{A}\to M, l_i, \rho)$ is a non positively graded vector bundle $\mathbf{A}=\bigoplus\limits_{i=0}^{k-1}A_{-i}$ together with a bundle map $\rho:A_0\to TM$ and graded antisymmetric brackets $l_i:\Gamma(\mathbf{A}\times\cdots^i\times \mathbf{A})\to \Gamma \mathbf{A}$, with $i\in\{1,\cdots, k+1\}$, satisfying linearity and Jacoby-like identities; see \cite{bon:on} for a complete definition.
	
The definition involves shuffles for the higher Jacobi identities and is not easy to deal with. But in the examples that we are interested in almost all brackets are zero and we can compute all the higher identities explicitly. 

Let $\Alie$ be a Lie algebroid and $(E_0\to E_1,\partial, \nabla^0, \nabla^1, K)$ a representation up to homotopy. In \cite{chen:hig}, it is shown that $\mathbf{A}=(A_0=A\oplus E_1)\oplus(A_{-1}=E_0)$ 
inherits a $L_2$-algebroid structure with brackets given by
\begin{equation*}
\left\{\begin{array}{l}
\rho=\rho\circ p_A, \quad l_1=\partial, \\
l_2(a+e, b+e')=\left[a,b\right]+\nabla_a^1 e'-\nabla_b^1e, \quad l_2(a+e, \xi)=\nabla^0_a\xi,\\
l_3(a+e, b+e', c+e'')=-K(a,b)e''+K(a,c)e'-K(b,c)e.
\end{array}\right.
\end{equation*} 
where $ a,b,c\in\Gamma A, \ e, e', e''\in\Gamma E_1, \ \xi\in\Gamma E_0$ and $p_A:A\oplus E_1\to A$. We denote this $L_2$-algebroid by $A\ltimes(E_0\to E_1)[1]$. Moreover, $A\ltimes (E_0\to E_1)[1]$ is a Lie extension of $A\to M$ in the sense of \cite{chen:hig}. Formally we are shifting the representation up to homotopy by $1$ and after that taking the semi-direct product, which explains our notation. 

We now discuss $L_k$-algebroids concentrated in $3$ degrees.

\begin{example}
Fix $k> 3$ and consider $\mathbf{A}=\oplus_i A_{i}$ a graded vector bundle where $A_i=0$ if $i\not\in\{0,-k+1,-k+2\}$. An $L_k-$algebroid structure on $\mathbf{A}$ is the same as:
	\begin{itemize}
		\item The anchor $\rho:A_0\to TM$.
		\item The $l_1$ given by a bundle map $\partial: A_{-k+1}\to A_{-k+2}$.
		\item The $l_2$ given by:
			\begin{itemize}
				\item A bracket $[\cdot,\cdot]:\Gamma A_0\times \Gamma A_0\to \Gamma A_0$.
				\item $A_0$-connections $\Phi:\Gamma A_0\times \Gamma A_{-k+2}\to \Gamma A_{-k+2}$ and $\Psi:\Gamma A_0\times \Gamma A_{-k+1}\to \Gamma A_{-k+1}$. 
			\end{itemize}		
		\item The $l_3$ given by $[\cdot,\cdot,\cdot]_3:\Gamma A_0\times \Gamma A_0\times \Gamma A_{-k+2}\to \Gamma A_{-k+1}$ that is $C^\infty(M)$-linear.
		\item The $l_k$ given by $[\cdot,\cdots,\cdot]_k:\Gamma A_0\times \cdots\times\Gamma A_0\to \Gamma A_{-k+2}$ that is $C^\infty(M)$-linear.
		\item The $l_{k+1}$ given by $[\cdot,\cdots,\cdot]_{k+1}:\Gamma A_0\times \cdots\times\Gamma A_0\to \Gamma A_{-k+1}$ that is $C^\infty(M)$-linear.	 
	\end{itemize}
	satisfying the following Jacobi-like identities:
	\begin{equation}\label{eqLnalg}
	\left\{
		\begin{array}{l}
			0=\left[a_1,\left[a_2,a_3\right]_2\right]_2+c.p.\\
			\Phi_a(\partial(X))=\partial(\Psi_a X).\\
			\left[a_1, a_2,\partial(X)\right]_3= F^\Psi(a_1, a_2) X.\\
			\partial([a_1,a_2,\xi])=F^\Phi(a_1, a_2) \xi.\\
			0=([[a_1,a_2]_2,a_3,\xi]_3+c.p.)+([a_1,a_2,\Phi_{a_3}\xi]_3+c.p.)+(\Psi_{a_1}[a_2,a_3,\xi]_3+c.p.).\\
			0=\partial([a_1,\cdots,a_{k+1}]_{k+1}+c.p.)+([[a_1,a_2]_2,a_3,\cdots,a_{k+1}]_k+c.p.)+\\
			\qquad+(\Phi_{a_1}[a_2,\cdots,a_{k+1}]_k+c.p.).\\
			0=(\Psi_{a_1}[a_2,\cdots,a_{k+2}]_{k+1}+c.p.)+([[a_1,a_2]_2,a_3,\cdots,a_{k+2}]_{k+1}+c.p.)+\\
			\qquad +([a_1,a_2,[a_3,\cdots,a_{k+2}]_k]_3 +c.p.).
		\end{array}
	\right.	
	\end{equation}
		where $a_1,\cdots,a_{k+2}\in\Gamma A_0,\ \xi\in\Gamma A_{-k+2}, \ X\in\Gamma A_{-k+1}$.
\end{example}

The case $k=3$ is more complicated but can be trated similarly. 

\begin{proposition}\label{L_kalg}
Let $\Alie$ be a Lie algebroid and $(E_0\to E_1,\partial, \nabla^0,\nabla^1, K)$ a representation up to homotopy. Then, for any $k>2$ the graded vector bundle
\begin{equation*}
\mathbf{A}=\left\{\begin{array}{l}
A_0=A,\\
A_{-k+1}=E_0,\\
A_{-k+2}=E_1.
\end{array}\right.
\end{equation*}
inherits an $L_k$-algebroid structure with brackets given by
\begin{equation*}
\left\{\begin{array}{l}
\rho=\rho, \quad \partial=\partial,\\
\left[\cdot,\cdot\right]_2=\left[\cdot,\cdot\right], \quad \Phi=\nabla^1, \quad\Psi=\nabla^0,\\
\left[a,b,e\right]_3=K(a,b)e, \quad \left[\cdot,-,\cdot\right]_{k}=0, \quad \left[\cdot,-,\cdot\right]_{k+1}=0.\\
\end{array}\right.
\end{equation*}
We denote this $L_k$-algebroid by $A\ltimes(E_0\to E_1)[k-1]$. Moreover, $A\ltimes(E_0\to E_1)[k-1]$ are Lie extensions of $A\to M$.
\end{proposition}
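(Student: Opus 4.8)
The plan is to reduce the statement to a term-by-term comparison between the Jacobi-like identities \eqref{eqLnalg} for an $L_k$-algebroid concentrated in the three degrees $\{0,-k+1,-k+2\}$ and the structure equations defining a $2$-term representation up to homotopy. First I would fix the identifications $A_0=A$, $A_{-k+1}=E_0$, $A_{-k+2}=E_1$ and declare the brackets exactly as in the statement: $\rho$ and $\partial$ as given, $l_2=[\cdot,\cdot]$ on $\Gamma A_0$, $\Phi=\nabla^1$ and $\Psi=\nabla^0$, $l_3(a,b,e)=K(a,b)e$, and $l_k=l_{k+1}=0$. Before addressing the higher Jacobi identities I would check the linearity requirements in the definition of an $L_k$-algebroid: these hold because $[\cdot,\cdot]$ is a genuine Lie algebroid bracket with anchor $\rho$, because $\nabla^0$ and $\nabla^1$ are $A$-connections and hence satisfy the Leibniz rule against $C^\infty(M)$, and because $K$ is tensorial so that $l_3$ is $C^\infty(M)$-linear.

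The heart of the argument is the verification of the seven identities in \eqref{eqLnalg}. The two of highest arity are settled first and for free: since $l_k=0$ and $l_{k+1}=0$, every summand in the sixth and seventh identities carries a factor $l_k$ or $l_{k+1}$ (in the seventh, the term $[a_1,a_2,[a_3,\dots,a_{k+2}]_k]_3$ has the inner $l_k=0$), so both collapse to $0=0$. The remaining five identities then translate precisely into the defining data: the first is the Jacobi identity for $[\cdot,\cdot]$; the second is $\nabla^1\circ\partial=\partial\circ\nabla^0$; substituting $\Psi=\nabla^0$, $\Phi=\nabla^1$, $l_3=K$, the third and fourth become $F^{\nabla^0}=K\circ\partial$ and $F^{\nabla^1}=\partial\circ K$; and the fifth becomes $d^\nabla K=0$. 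Thus the $L_k$-algebroid axioms hold exactly because $\Alie$ is a Lie algebroid and $(E_0\to E_1,\partial,\nabla^0,\nabla^1,K)$ is a representation up to homotopy. For $k>3$ this is literally the content of \eqref{eqLnalg}; for $k=3$ (where \eqref{eqLnalg} no longer applies verbatim, as noted in the text) the same strategy works after separating the components of $l_3$ according to the degrees of their arguments, the prescription $l_k=l_{k+1}=0$ referring to the all-$A_0$ components.

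For the assertion that $A\ltimes(E_0\to E_1)[k-1]$ is a Lie extension of $A\to M$ in the sense of \cite{chen:hig}, I would exhibit the projection $\pi\colon\mathbf{A}\to A$ onto the degree-zero part $A_0=A$ and verify it is a strict morphism of $L_k$-algebroids. It intertwines the anchors by construction, and it is compatible with the brackets because the only bracket with nonzero output in degree $0$ is $l_2|_{\Gamma A_0\times\Gamma A_0}=[\cdot,\cdot]$, whereas $\partial$, $l_3$, $l_k$, $l_{k+1}$ all take values in strictly negative degrees (here $-k+1,-k+2<0$ since $k>2$) and so are annihilated by $\pi$. The kernel $E_0\oplus E_1$ then carries the induced abelian-type structure encoding the representation, which is exactly the data identifying this as a Lie extension.

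The main obstacle I anticipate is not the matching of identities 1--5, which is essentially bookkeeping once the identifications are in place, but rather the degree and sign accounting needed to guarantee that (a) no further higher bracket is forced to be nonzero by the identities, so that the choice $l_k=l_{k+1}=0$ is genuinely consistent, and (b) the cyclic-permutation conventions in \eqref{eqLnalg} reproduce the Bianchi-type identity $d^\nabla K=0$ with the correct signs. Pinning down the Lie extension claim likewise requires unwinding the precise definition from \cite{chen:hig}, and this is where I would be most careful to confirm that $\pi$ is a morphism in their sense.
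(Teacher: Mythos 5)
Your proposal is correct and follows essentially the same route as the paper's proof: check linearity and skew-symmetry, observe that the identities involving $l_k$ and $l_{k+1}$ hold trivially since those brackets vanish, and match the remaining five Jacobi-like identities in \eqref{eqLnalg} exactly with the Lie algebroid axioms and the defining equations of a representation up to homotopy. Your treatment is in fact slightly more careful than the paper's, which leaves the $k=3$ case and the Lie-extension assertion as implicit or ``straightforward,'' whereas you spell out the component decomposition of $l_3$ for $k=3$ and exhibit the projection onto $A_0$ as the extension morphism.
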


\begin{proof}
Observe that all the objects have the right skew-symmetry and $C^\infty(M)$ linearity properties. Let us check the Jacobi-like identities \eqref{eqLnalg}. The first one is satisfied because it is the Jacobi identity for the Lie bracket on $A\to M$. The following four are equivalent to $(E_0\to E_1,\partial, \nabla^0,\nabla^1, K)$ being a representation up to homotopy of $A\to M$. Finally, since the $k$ and the $k+1$ brackets are zero the last two equations are trivially satisfied.

The last assertion is straightforward.
\end{proof}

Up to now, given a Lie algebroid and a representation up to homotopy we constructed, for any $k\in\bN^*$, an $L_{k+1}$-algebroid that we denoted by $A\ltimes(E_0\to E_1)[k]$ and is an extension of the Lie algebroid $\Alie$ in the sense of \cite{chen:hig}. This corresponds to the arrow ``$G_1$'' in \eqref{diagram}. 

In the case of the coadjoint representation up to homotopy we can incorporate the $H$-twist appearing in Section \ref{S4}:

\begin{corollary}
Let $\Alie$ be a Lie algebroid and $H\in\Gamma\bigwedge^{k+1}A^*$. Given a connection on $A\to M$ consider the coadjoint representation up to homotopy as defined in Example \ref{coadjoint}. For any $k>2$ the graded vector bundle
\begin{equation*}
\mathbf{A}=\left\{\begin{array}{l}
A_0=A,\\
A_{-k+1}=T^*M,\\
A_{-k+2}=A^*.
\end{array}\right.
\end{equation*}
inherits an $L_k$-algebroid structure with brackets given as before for $l_1, l_2$ and $l_3$ and
\begin{equation*}
\left\{\begin{array}{l}
\left[a_1,\cdots,a_k\right]_k=i_{a_k}\cdots i_{a_1}H,\\
\left[a_1,\cdots, a_{k+1}\right]_{k+1}=d(i_{a_{k+1}}\cdots i_{a_1}H)-\sum_{i=1}^{k+1}(-1)^{i+1}\langle D(a_i), i_{a_{k+1}}\cdots \widehat{i_{a_i}}\cdots i_{a_1}H\rangle.
\end{array}\right.
\end{equation*}
We denote these $L_k$-algebroids by $A\ltimes_H(A^*\to T^*M)[k-1]$.
\end{corollary}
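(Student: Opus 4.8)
\emph{Strategy.} The plan is to deduce the statement from the $Q$-manifold dictionary together with the untwisted Proposition \ref{L_kalg}, rather than to check the Jacobi-like identities \eqref{eqLnalg} head-on. Under the correspondence between $L_k$-algebroids and (split) degree $k$ $Q$-bundles over $A[1]$ (the arrow $G_2$ of \eqref{diagram}, following \cite{bon:on}), and fixing the splitting given by the chosen connection on $A$, the graded vector bundle $\mathbf{A}$ with $A_0=A$, $A_{-k+1}=T^*M$, $A_{-k+2}=A^*$ has as associated $Q$-manifold exactly $T^*[k]A[1]$: its fibre coordinates sit in degrees $1$ (dual to $A_0$), $k-1$ (dual to $A_{-k+2}=A^*$) and $k$ (dual to $A_{-k+1}=T^*M$), i.e. the degrees of the coordinates $\alpha^j$, $a_j$, $\partial/\partial x^i$ of Proposition \ref{VBchar}. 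By Proposition \ref{L_kalg} applied to the coadjoint representation of Example \ref{coadjoint}, the untwisted $L_k$-algebroid $A\ltimes(A^*\to T^*M)[k-1]$ corresponds under $G_2$ to the $Q$-bundle $(T^*[k]A[1],X_\theta)$, the cotangent prolongation $T^*A$ shifted by $k-1$ (cf. the examples after Corollary \ref{VBtoQ-bundle}).

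\emph{Turning on the twist.} Since $d_AH=0$ (as throughout Section \ref{S4}), Theorem \ref{Q>3} gives $\{\theta_H,\theta_H\}=2d_AH=0$, so $X_{\theta_H}=X_\theta+\{H,\cdot\}$ is again homological; transporting it back through $G_2$ with the same splitting produces an $L_k$-algebroid on $\mathbf{A}$. The brackets $l_1,l_2,l_3$ depend only on $X_\theta$ and are therefore the coadjoint brackets of Proposition \ref{L_kalg}. Because $H\in C^{k+1}_{A[1]}=\Gamma\bigwedge^{k+1}A^*$ is of polynomial degree $k+1$ in the degree $1$ coordinates $\alpha^j$ dual to $A_0$, the extra derivation $\{H,\cdot\}$ can contribute only to the two top brackets $l_k$ and $l_{k+1}$; these are extracted by computing $\{H,\cdot\}$ on the coordinates $a_j$ (degree $k-1$) and $\partial/\partial x^i$ (degree $k$) and dualizing. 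Crucially, all the identities \eqref{eqLnalg} are then subsumed in the single equation $X_{\theta_H}^2=0\Leftrightarrow d_AH=0$, so no separate Jacobi check is required. (For exact twists $H=d_AB$ this is also seen directly: by Proposition \ref{symetries} the symplectomorphism $\tau^B$ intertwines $X_\theta$ with $X_{\theta_H}$, hence identifies the two $L_k$-algebroids.)

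\emph{Main obstacle.} The real work is the explicit identification of the operations coming from $\{H,\cdot\}$ with the stated formulas. Using the local form \eqref{symbr} of the symplectic bracket, the action of $\{H,\cdot\}$ on $k$ of the degree $(k-1)$ generators (which represent $A$) yields $i_{a_k}\cdots i_{a_1}H\in\Gamma A^*=\Gamma A_{-k+2}$, i.e. the claimed $l_k$. The delicate point is $l_{k+1}$, whose output lies in $\Gamma T^*M$: the degree $k$ directions $\partial/\partial x^i$ are expressed through the connection once one trivializes $T^*[k]A[1]$ by the splitting, so the Christoffel symbols enter and convert the naive de Rham term $d\big(H(a_1,\cdots,a_{k+1})\big)$ into $d\big(H(a_1,\cdots,a_{k+1})\big)-\sum_{i}(-1)^{i+1}\langle D(a_i),\,i_{a_{k+1}}\cdots\widehat{i_{a_i}}\cdots i_{a_1}H\rangle$, which is exactly the asserted $l_{k+1}$. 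This one computation is where all the bookkeeping lives, and is the step I expect to be hardest.

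\emph{Alternative direct check.} Without the dictionary one verifies \eqref{eqLnalg} by hand: the first five identities hold because $(T^*M\to A^*,\rho^*,\nabla^0,\nabla^1,K)$ is a representation up to homotopy, by Example \ref{coadjoint} and Proposition \ref{L_kalg}. Pairing the sixth identity with a test section $b\in\Gamma A$ and using $\langle\nabla^1_a\beta,b\rangle=\rho(a)\langle\beta,b\rangle-\langle\beta,\nabla^0_ab\rangle$, all connection-dependent terms cancel and the remainder is $d_AH(a_1,\cdots,a_{k+1},b)$, so the sixth identity is precisely $d_AH=0$; the seventh reduces, after the analogous cancellation, to a curvature relation implied by $d^\nabla K=0$ together with $d_AH=0$. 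In either approach the upshot is that $A\ltimes_H(A^*\to T^*M)[k-1]$ is an $L_k$-algebroid exactly because $d_AH=0$, with the $k=3$ case handled as indicated in the text.
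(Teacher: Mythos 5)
Your overall strategy is viable, but both of your routes stop short of proving the actual statement, and the step you skip is precisely the content of the corollary. The statement does not merely assert that \emph{some} $L_k$-algebroid structure exists on $\mathbf{A}$: it asserts specific formulas for the top brackets, in particular the connection-dependent formula for $[a_1,\cdots,a_{k+1}]_{k+1}$. In your dictionary route you concede this openly: the identification of the operations induced by $\{H,\cdot\}$ (after trivializing $T^*[k]A[1]$ by the splitting) with the stated $l_k$ and $l_{k+1}$ is labelled the ``main obstacle'' and ``the step I expect to be hardest'', and it is never carried out; what your argument actually establishes is only that transporting $X_{\theta_H}$ through $G_2$ yields an $L_k$-algebroid with \emph{some} top brackets. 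In your direct route the same gap reappears: your sketch of the sixth identity of \eqref{eqLnalg} is plausible, but the seventh is dismissed as ``a curvature relation implied by $d^\nabla K=0$ together with $d_AH=0$'' with no computation; verifying that identity for the stated $l_{k+1}$ is exactly the bookkeeping being avoided. (A minor additional point: your appeal to Theorem \ref{Q>3} covers only $k>3$, while the corollary is stated for $k>2$; for $k=3$ one needs Theorem \ref{Q=3} with vanishing pairing.)

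The paper closes this gap with one clean observation rather than a coordinate computation. By Proposition \ref{L_kalg}, only the last two identities of \eqref{eqLnalg} need checking, and for the stated $l_k$ and $l_{k+1}$ these two identities are \emph{jointly} equivalent to the single equation $D^{ad^*}(H\otimes 1)=0$ in the coadjoint complex. The compatibility (Leibniz rule) of $D^{ad^*}$ with $d_A$ then gives $D^{ad^*}(H\otimes 1)=d_AH\otimes 1+(-1)^{k+1}H\otimes D^{ad^*}(1)=d_AH$, so the identities hold if and only if $d_AH=0$. This cohomological repackaging is what replaces the computation you deferred: the connection terms in $l_{k+1}$ are exactly what make the pair of identities assemble into $D^{ad^*}$ of a single element. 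If you want to salvage your $Q$-manifold route, you would need to compute $\{H,\cdot\}$ on the degree $k-1$ and degree $k$ generators in the splitting induced by the connection and match the outcome against the stated formulas --- essentially the same work, just done on the graded-manifold side.
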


\begin{proof}
By Proposition \ref{L_kalg} it remains to prove that the $k$ and the $k+1$ brackets satisfy the Jacobi like identities. Given $H\in\Gamma \bigwedge^{k+1}A^*$ consider $H\otimes 1$ in the coadjoint complex. The last two equations on \eqref{eqLnalg} are equivalent to $D^{ad^*}(H\otimes 1)=0$ and if we remember the compatibility between $D^{ad^*}$ and $d_A$ we obtain
\begin{equation*}
D^{ad^*}(H\otimes 1)=d_AH \otimes 1 +(-1)^{k+1}H\otimes D^{ad^*}(1)=d_AH,
\end{equation*}
so the $k$ and $k+1$ brackets satisfy the Jacobi like identities iff $d_AH=0$. 
\end{proof}

The arrow ``$G_2$'' in \eqref{diagram} was constructed in \cite{bon:on}, and gives an equivalence of categories between $L_k$-algebroids and $Q$-manifolds. Therefore given a Lie algebroid $\Alie$ and a representation up to homotopy $(E_0\to E_1, \partial, \nabla^0,\nabla^1, K)$, for any $k\in \bN^*$,  $\big(A\ltimes(E_0\to E_1)[k]\big)[1]$ is a $Q$-manifold. In fact, since $A\ltimes(E_0\to E_1)[k]$ is a Lie extension of $A\to M$, then $\big(A\ltimes(E_0\to E_1)[k]\big)[1]$ is a $Q$-bundle over $(A[1], Q)$. The only thing that remains to be proven is that these graded $Q$-bundles are the split version of the graded $Q$-bundles obtained in Corollary \ref{VBtoQ-bundle}.

Let $D\to E$ over $A\to M$ be a VB-algebroid with core $C\to M$ and consider $S:A\to \widehat{A}$ a splitting of the exact sequence \eqref{fat}. As we already mentioned, $A\to M$ is a Lie algebroid and $(C\to E, \partial, \nabla^0, \nabla^1, K)$ is a representation up to homotopy of $A\to M$, that depends on the splitting $S:A\to\widehat{A}$.

\begin{theorem}
 For any $k\in\bN^*$, the splitting $S:A\to \widehat{A}$ induces a $Q$-manifold isomorphism
\begin{equation*}
 (D[1]_E[k]_{A[1]},Q) \cong\big((A\ltimes(C\to E)[k])[1], Q\big).
\end{equation*}
\end{theorem}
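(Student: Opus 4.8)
The plan is to prove the statement in two stages: first produce an isomorphism $\phi$ of the underlying graded manifolds using the splitting $S$, and then check that $\phi$ intertwines the two homological vector fields. Both sides are $Q$-bundles over $(A[1],d_A)$: the left-hand side by Corollary \ref{VBtoQ-bundle}, and the right-hand side because $A\ltimes(C\to E)[k]$ is a Lie extension of $A\to M$, so its image under the functor ``$[1]$'' is a $Q$-bundle over $(A[1],Q=d_A)$. Thus an isomorphism of $Q$-manifolds amounts to an isomorphism of graded manifolds over $A[1]$ together with compatibility of the two degree $1$ vector fields.

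For the first stage I would compare the vector bundle descriptions degree by degree. Proposition \ref{VBdesc_D[1][k]} describes $C_{D[1]_E[k]_{A[1]}}$ in degrees $0,\dots,k+1$, the only non-tautological pieces being $\Gamma(E^*\oplus\wedge^k A^*)$ in degree $k$ and $\Gamma(\widehat{C}^*\oplus\wedge^{k+1}A^*)$ in degree $k+1$. On the other hand, unwinding the functor ``$[1]$'' of \cite{bon:on} applied to the $L_{k+1}$-algebroid $A\ltimes(C\to E)[k]$ of Proposition \ref{L_kalg}, whose underlying graded bundle is $A$ in degree $0$, $E$ in degree $-k+1$ and $C$ in degree $-k$, the generators of its Chevalley--Eilenberg algebra are $A^*$ in degree $1$, $E^*$ in degree $k$ and $C^*$ in degree $k+1$; hence its degree $k+1$ part is $\Gamma\big(\wedge^{k+1}A^*\oplus(A^*\otimes E^*)\oplus C^*\big)$. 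The splitting $S$ of \eqref{fat} trivialises the corresponding sequence for the dual double vector bundle $D^{*_A}$, giving an isomorphism $\widehat{C}^*\cong C^*\oplus(A^*\otimes E^*)$; together with the identity in all other degrees this defines $\phi$.

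For the second stage, note that for $k>1$ the fibre coordinates dual to $E$ and $C$ sit only in degrees $k$ and $k+1$, so any product of two of them has degree $\ge 2k>k+1$ and is irrelevant; consequently a projectable, fibrewise linear degree $1$ vector field over $(A[1],d_A)$ is completely determined by $d_A$ together with its action on the fibre-linear functions $\Gamma E^*$ and $\Gamma C^*$. I would therefore compute these actions on both sides. On the left, the fact that $D[1]_E\to A[1]$ is a $Q$-bundle (arrow $F_1$ of \cite{gra:vb}) together with the splitting $S$ (arrow $S_1$) expresses $Q$ on $\Gamma E^*$ and $\Gamma C^*$ precisely through the structure operators $\partial,\nabla^0,\nabla^1,K$ of the representation up to homotopy on $(C\to E)$; as in the proof of Corollary \ref{VBtoQ-bundle}, the local expression of $Q$ is unchanged by the shift ``$[k]$''. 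On the right, the Chevalley--Eilenberg differential of $A\ltimes(C\to E)[k]$ is dual to the brackets of Proposition \ref{L_kalg}, namely $l_1=\partial$, $l_2=(\,[\cdot,\cdot],\nabla^1,\nabla^0)$ and $l_3=K$ with all higher brackets vanishing, so $Q$ acts on $\Gamma E^*$ by $\partial^*$ and $\nabla^1$, and on $\Gamma C^*$ by $\nabla^0$ and $K$. These are the same formulas, whence $\phi$ is a $Q$-morphism.

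The main obstacle is the bookkeeping of the second stage, i.e.\ checking that the Gracia-Saz--Mehta structure functions read off from the linear and core sections of the VB-algebroid $D\to E$ match, after the degree $k$ shift and dualisation, the $L_\infty$-brackets of Proposition \ref{L_kalg}; this is where signs, the direction of $\partial$ versus $\partial^*$, and the identification $\widehat{C}^*\cong C^*\oplus(A^*\otimes E^*)$ coming from double vector bundle duality must be handled with care. A convenient shortcut is to isolate the case $k=1$, where the statement reduces to the compatibility between the descriptions of \cite{gra:vb} and \cite{chen:hig} of the same semi-direct product, and then propagate it to all $k$ using that the homological vector field is linear on the shifted fibres and hence has a shift-independent coordinate form.
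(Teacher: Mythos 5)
Your proposal is correct and essentially reproduces the paper's own proof: the paper likewise uses double vector bundle duality to transfer the splitting $S$ of \eqref{fat} into a splitting $S'$ of the dual fat sequence \eqref{fatdual}, defines the graded isomorphism as the identity in all degrees below the top and via $S'$ together with the algebra multiplication in the top degree, and then obtains $Q$-compatibility by matching the structure operators $(\partial,\nabla^0,\nabla^1,K)$, deferring the details to the proof of the main theorem of \cite{bon:on}. Your degree-counting observation (for $k>1$ products of fibre generators exceed degree $k+1$, so $Q$ is determined by its action on $\Gamma E^*$ and $\Gamma C^*$) is just a spelled-out version of that last step, and your identification $\widehat{C}^*\cong C^*\oplus(A^*\otimes E^*)$ is in fact the correct form of the fibre in \eqref{fatdual}, where the paper's $\Hom(A,E)$ should read $\Hom(A,E^*)$.
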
 

\begin{proof}

First recall that, by the theory of double vector bundles, given a splitting of the exact sequence \eqref{fat} we obtain a decomposition of the double vector bundle $D\to E$ over $A\to M$. A decomposition of the double vector bundle produces a decomposition of the dual double vector bundle $D^{*_A}\to C^*$ over $A\to M$. Finally, this decomposition induce a splitting of the sequence 
\begin{equation}\label{fatdual}
\Hom(A, E)\to \widehat{C}^*\to C^*.
\end{equation}
 Therefore, given $S:A\to\widehat{A}$ splitting of \eqref{fat} we obtain $S':C^*\to \widehat{C}^*$ splitting of \eqref{fatdual}.

Using Proposition \ref{VBdesc_D[1][k]} we see that the map $S':C^*\to\widehat{C}^*$ induces the following isomorphism between the graded manifolds $D[1]_E[k]_{A[1]}=(M,C_{D[1]_E[k]_{A[1]}})$ and $(A\ltimes(C\to E)[k])[1]=(M,C_{(A\ltimes(C\to E)[k])[1]})$:
\begin{equation*}
\left\{\begin{array}{ll}
\id:M\to M,\\
\Psi^i:C^i_{(A\ltimes(C\to E)[k])[1]}\to C^i_{D[1]_E[k]_{A[1]}},& \Psi^i=\id \text{ for } i<k.\\
\Psi^k:C^k_{(A\ltimes(C\to E)[k])[1]}\to C^k_{D[1]_E[k]_{A[1]}},& \Psi^k(c+\alpha\cdot\xi+\tau)=S'(c)+m(\alpha,\xi)+\tau.
\end{array}\right.
\end{equation*}
where $c\in\Gamma C^*, \ \alpha\in\Gamma A^*, \xi\in\Gamma E$ and $\tau\in\Gamma\bigwedge^k A^*$. 

One can see that this rules defines a graded isomorphism and going along the proof of the main Theorem in \cite{bon:on} it is easy to conclude that this map also commutes the $Q$-structures.
\end{proof}

\begin{corollary}
For any $k\in\bN^*$, any connection on $A\to M$ induces a $Q$-manifold isomorphism
\begin{equation*}
(T^*[k+1]A[1],X_{\theta_H})\cong\big((A\ltimes_H(A^*\to T^*M)[k])[1],Q\big).
\end{equation*}
\end{corollary}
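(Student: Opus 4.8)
The plan is to read off the statement from the theorem above, specialized to the cotangent prolongation, and then to deal with the $H$-twist separately. First I would recall from Example~\ref{VBCo-Tan} that $T^*A$ is a VB-algebroid over $A\to M$ whose base is $E=A^*$, whose core is $C=T^*M$, and whose fat bundle is $\widehat{A}=J^1A$, that a splitting $S:A\to\widehat{A}$ of the fat sequence \eqref{fat} is the same as a connection $\nabla$ on $A\to M$, and that the representation up to homotopy it induces is the coadjoint representation of Example~\ref{coadjoint}. Since $T^*A[1]_{A^*}[k]_{A[1]}=T^*[k+1]A[1]$ with projectable homological vector field $Q=X_\theta$, applying the preceding theorem verbatim with $D=T^*A$, $E=A^*$, $C=T^*M$ would give, in the untwisted case $H=0$, a $Q$-manifold isomorphism
\[
(T^*[k+1]A[1],X_\theta)\;\cong\;\big((A\ltimes(A^*\to T^*M)[k])[1],Q\big),
\]
induced by $\nabla$ through the graded isomorphism $\Psi$ built in that proof out of the dual splitting $S'$ (the identity below the top degree, and given by $S'$ on top).

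It then remains to match the two twists. On the symplectic side, $\theta_H=\theta+H$ with $H\in\Gamma\bigwedge^{k+2}A^*=C^{k+2}_{A[1]}\subset C^{k+2}_{T^*[k+1]A[1]}$ and $d_AH=0$, so that $X_{\theta_H}=X_\theta+X_H$ with $X_H=\{H,\cdot\}$. On the algebroid side, the previous corollary (read with $k$ replaced by $k+1$) shows that $A\ltimes_H(A^*\to T^*M)[k]$ differs from its untwisted version by exactly the two top brackets $l_{k+1}$ and $l_{k+2}$, both built from $H$ through the coadjoint structure. Since the graded isomorphism $\Psi$ does not involve $H$, I would only need to check that, through $\Psi$ and under the correspondence between $L_\infty$-algebroids and $Q$-manifolds of \cite{bon:on}, the extra field $X_H$ corresponds precisely to these two brackets.

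The hard part will be this last matching, a computation in the spirit of the proofs of Theorem~\ref{Q>3} and of the previous corollary. Because the symplectic bracket on $T^*[k+1]A[1]$ is the Schouten bracket on $A[1]$ shifted by $k+1$ (see \eqref{symbr}) and $H$ depends only on the degree-$1$ coordinates (sections of $A^*$), the operator $X_H=\{H,\cdot\}$ annihilates the base coordinates and sends the fibre coordinates — those of degree $k$ dual to $A^*$, and those of degree $k+1$ dual to $T^*M$ — to functions on $A[1]$. I would identify the component landing in $C^{k+1}_{A[1]}=\Gamma\bigwedge^{k+1}A^*$ with the iterated contraction $i_{a_{k+1}}\cdots i_{a_1}H\in\Gamma A^*$, which reproduces $l_{k+1}$, and the component landing in $C^{k+2}_{A[1]}$ with $d(i_{a_{k+2}}\cdots i_{a_1}H)$ together with the connection corrections $\langle D(a_i),\cdot\rangle$ supplied by $\theta$ and the splitting, which reproduces $l_{k+2}$; these are exactly the brackets of the previous corollary. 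Finally, by Theorem~\ref{Q>3} the master equation $\{\theta_H,\theta_H\}=0$ is equivalent to $d_AH=0$, matching the identity $D^{ad^*}(H\otimes 1)=d_AH=0$ that guarantees the Jacobi-like identities on the algebroid side, so both objects are genuine $Q$-manifolds and $\Psi$ is a $Q$-isomorphism. The real obstacle is thus transporting $X_H$ through $\Psi$ and recognizing, term by term, these de Rham and connection corrections as $l_{k+1}$ and $l_{k+2}$.
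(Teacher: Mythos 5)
Your proposal is correct and follows exactly the paper's intended route: the paper states this corollary without a separate proof, as an immediate consequence of the preceding theorem applied to the VB-algebroid $T^*A$ (whose splittings of \eqref{fat} are connections on $A\to M$ and whose induced representation up to homotopy is the coadjoint one, Example \ref{VBCo-Tan}), combined with the twisted brackets introduced in the corollary defining $A\ltimes_H(A^*\to T^*M)[k-1]$. Your explicit transport of $X_H=\{H,\cdot\}$ through $\Psi$, identifying $\{H,\widehat{a}\}=i_aH$ with the dualization of $l_{k+1}$ and the covariant derivative $\nabla_X H$ (de Rham term plus connection corrections) with $l_{k+2}$, just fills in the verification the paper leaves implicit.
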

 
\section{AKSZ $\sigma$-models and integrating objects}\label{S6}

\subsection{AKSZ $\sigma$-models}

In physics, a gauge theory is a field theory with a lagrangian action functional which is invariant under some symmetries. If the symmetries are not given by a Lie algebra, Batalin and Vilkovisky propose a way to enlarge the space of fields using supermanifolds that allows to compute some physically interesting quantities of the system.

In the celebrated paper \cite{AKSZ} Alexandrov, Kontsevich, Schwarz and Zaboronsky show how to produce a BV theory with space of superfields the infinite dimensional $\bZ$-graded manifold  $Maps(T[1]\Sigma, \cM)$, where $\cM$ is a symplectic $Q$-manifold of degree $k$ and $\Sigma$ a $k+1$ dimensional smooth manifold, see \cite{mnev:lec} for more details. The gauge theories that are constructed in this way are known as AKSZ $\sigma$-models and examples of them include the $A$ and $B$ models, the Poisson $\sigma$-model, Chern-Simons theory among others.

As a consequence, we have that our manifolds $(T^*[k]A[1],\{\cdot,\cdot\},X_{\theta_H})$ can be considered as targets of AKSZ  $\sigma$-models with source $k+1$ dimensional manifolds. For some cases these gauge theories were already considered: when $A\to M$ is just a Lie algebra, it is called BF theory and has been extensively studied in \cite{cat:top, cat:loop, cat:hig}. When $A=TM$ and $k=2$,  it corresponds to the standard Courant algebroid and this particular example was called the open topological membrane and studied in \cite{hof:top}. The general $k=2$ fits in the Courant $\sigma$-model as described in \cite{roy:AKSZ}, but not many explicit computations have been done.

Following \cite{cat:cla}, we obtain that for any manifold $\Sigma$, possibly with boundary, of dimension $k+1$ the space $Maps(T[1]\Sigma, T^*[k]A[1])$ defines a BV-BFV theory using the AKSZ construction, see \cite[section 6]{cat:cla}. Now, what we impose on the boundary is that the image of any boundary component takes values on a lagrangian $Q$-submanifold, or what is equivalent by Theorem \ref{Nan-Di-Qlag} on a higher Dirac structure. For more details about this see \cite[Section 3.7]{cat:cla}.

In forthcoming works, we plan to develop the full theory for this kind of $\sigma$-models and compute perturbative topological invariants, associated to Lie algebroids, for manifolds of any dimension using the techniques of \cite{cat:per}. 
\begin{remark}
Comparing to other cases, note that the Topological open $k$-brane of \cite{hof:top} has a different target manifold. For example when $k=3$, the target is the manifold $T^*[3]T^*[2]T[1]M$, which is bigger than $T^*[3]T[1]M$. Also the Nambu $\sigma$-model proposed in \cite{bou:aks} has a different target manifold, in this case $T^*[k]\big((\bigwedge^{k-1}T)[k-1]T[1]M\big)$. In the AKSZ $\sigma$-model proposed here the Nambu geometry appears naturally as boundary contributions not in the action, as in \cite{bou:aks}.
\end{remark}

\subsection{Speculations on the integration of Q-manifolds} 
The works of \v{S}evera \cite{sev:der} and \cite{sev:int} propose that degree $k$ $Q$-manifolds integrate to Lie $k$-groupoids. Lie $k$-groupoids are understood here as simplicial manifolds that satisfy the Kan condition for any $l>k$.

The manifolds $(T^*[k]A[1],\{\cdot,\cdot\},\theta_H)$ are examples of $Q$-manifolds and we can apply \v{S}evera's procedure to integrate them. In particular, for $k=2$ we are discussing the integration of the double of a Lie algebroid seen as a Courant algebroid; this integration is not completely understood yet but some particular cases have been treated in the literature,  see \cite{li:int, meh:sym, chen:hig}. The integrations of lagrangians $Q$-submanifolds of $(T^*[2]T[1]M,\{\cdot,\cdot\},\theta_H)$ include the integrations of Dirac structures to lagrangian subgroupoids as shown in \cite{meh:sym}, that are expected to correspond to presymplectic groupoids as in \cite{bur:int}. For $k>2$ what happens should be similar to this case.

The interesting consequence is that now we have a place where Nambu structures must be integrated. As Poisson structures integrate to symplectic groupoids that corresponds to lagrangian subgroupoids inside symplectic $2$-groupoids, Nambu structures should integrate to lagrangian $k-1$-subgroupoid of symplectic $k$-groupoids. Since Nambu structures just define weak lagrangians the lagrangian $k-1$-subgroupoid could be singular. 

\begin{remark}
Another approach to integration in our particular case could be using the fact that we have a semidirect product. We could try to integrate first the algebroid and after that a cocycle given by the  representation; these techniques were used in \cite{chen:hig} but then it is unknown how to obtain the symplectic structure.
\end{remark}
\begin{remark}
Recently it was proposed by Laurent-Gengoux and Wagemann that Leibniz algebroids will be integrated to Lie rackoids \cite{lau:rac}. Nambu-Dirac structures are examples of Leibniz algebroids, see \cite{hag:nam}. So it would be interesting to compare, if possible, this rackoid integration with the one proposed here.
\end{remark}


\begin{thebibliography}{10}

\bibitem{cam:rep}
C.-A.~Abad, and M.~Crainic, \emph{Representations up to homotopy of Lie algebroids}.  J. Reine Angew. Math. \textbf{663} (2012), 91--126.

\bibitem{AKSZ}
M.~Alexandrov, A.~Schwarz, M.~Kontsevich, and O.~Zaboronsky, \emph{The geometry of the master equation and topological quantum field theory}. Internat. J. Modern Phys. A \textbf{12} (1997), no. 7, 1405--1429.

\bibitem{bi:on}
Y.~Bi, and Y.~Sheng, \emph{On higher analogues of Courant algebroids}. Sci. China Math., \textbf{54} (3) (2011), 437--447.

\bibitem{bi:dir}
\bysame, \emph{Dirac structures for higher analogues of Courant algebroids}. Int. J. Geom. Methods Mod. Phys. \textbf{12} (2015), no. 1, 1550010, 13 pp.

\bibitem{bon:on}
G.~Bonavolonta, and N.~Poncin, \emph{On the category of Lie n-algebroids}. J. Geom. Phys. \textbf{73} (2013), 70--90.

\bibitem{zab:bra}
G.~Bonelli, and M.~Zabzine, \emph{From current algebras for p-branes to topological M-theory}.J. HighEnergy Phys., (9):015, 20, 2005.

\bibitem{bou:aks}
P.~Bouwknegt, and B.~Jur\v{c}o \emph{AKSZ construction of topological open p-brane action and Nambu brackets}.  Rev. Math. Phys. \textbf{25} (2013), no. 3, 1330004, 31 pp.

\bibitem{bur:super}
H.~Bursztyn, A.-S.~Cattaneo, R.A.~Mehta, and M.~Zambon, \emph{Graded geometry and Generalized reduction}, work in progress.

\bibitem{bur:int}
 H.~Bursztyn, M.~Crainic, A.~Weinstein, and C.~Zhu, \emph{Integration of twisted Dirac brackets}. Duke Math. J. \textbf{123} (2004), no. 3, 549--607.

\bibitem{bur:frob}
H.~Bursztyn, M.~Cueca, and R.A.~Mehta, \emph{Frobenius theorem for graded manifolds}. work in progress.

\bibitem{bur:hig}
H.~Bursztyn, N.~Martinez, and R.~Rubio, \emph{On higher Dirac Structures}.  Int. Math. Res. Not. IMRN (2019), no. 5, 1503--1542.

\bibitem{bur:sup}
H.~Bursztyn, D.~Iglesias-Ponte, and P.~\v{S}evera, \emph{Courant morphisms and moment maps}. Math. Res. Lett. \textbf{16} (2009), no. 2, 215--232. 

\bibitem{can:on}
F.~Cantrijn, A.~Ibort, M.~de Le\'{o}n, \emph{On the geometry of multisymplectic manifolds}. J. Austral. Math. Soc. Ser. A \textbf{66} (1999), no. 3, 303--330.

\bibitem{cat:top}
A.-S.~Cattaneo, P.~Cotta-Ramusino, J.~Fr\"{o}hlich, and M.~Martellini, \emph{Topological BF theories in 3 and 4 dimensions}. J. Math. Phys. \textbf{36} (1995), no. 11, 6137--6160.

\bibitem{cat:loop}
 A.-S.~Cattaneo, P.~Cotta-Ramusino, and C.-A.~Rossi, \emph{Loop observables for BF theories in any dimension and the cohomology of knots}. Lett. Math. Phys. \textbf{51} (2000), no. 4, 301--316.

\bibitem{cat:hig}
A.-S.~Cattaneo, and C.-A.~Rossi, \emph{Higher-dimensional BF theories in the Batalin-Vilkovisky formalism: the BV action and generalized Wilson loops}. Comm. Math. Phys. \textbf{221} (2001), no. 3, 591--657.

\bibitem{cat:cla}
A.-S.~Cattaneo, P.~Mnev, and N.~Reshetikhin, \emph{Classical BV theories on manifolds with boundary}. Comm. Math. Phys. \textbf{332} (2014), no. 2, 535--603.

\bibitem{cat:per}
\bysame, \emph{Perturbative quantum gauge theories on manifolds with boundary}. Comm. Math. Phys. \textbf{357} (2018), no. 2, 631--730.

\bibitem{cat:int}
 A.-S.~Cattaneo, and F.~Sch\"{a}tz, \emph{ Introduction to supergeometry}. Rev. Math. Phys. \textbf{23} (2011), no. 6, 669--690.
 
\bibitem{cou:bey}
 T.~Courant, and A.~Weinstein, \emph{Beyond Poisson structures}. Action hamiltoniennes de groupes. Troisième théorème de Lie (Lyon, 1986), 39--49, Travaux en Cours, \textbf{27}, Hermann, Paris, 1988.

\bibitem{duf:poi}
J-P.~Duflo, and N.T.~Zung, \emph{Poisson structures and their normal forms}. Progress in Mathematics, \textbf{242}. Birkhäuser Verlag, Basel, (2005). xvi+321 pp.

\bibitem{gra:vb}
A.~Gracia-Saz and R.~Mehta, \emph{Lie algebroid structures on double vector bundles and representation theory of Lie algebroids}. Adv. Math. \textbf{223} (2010), no. 4, 1236--1275.
  
\bibitem{gru:h-t}
 M.~Gr\"{u}tzmann, \emph{H-twisted Lie algebroids}. J. Geom. Phys. \textbf{61} (2011), no. 2, 476--484.


\bibitem{gua:gen}
M.~Gualtieri, \emph{Generalized complex geometry}. PhD Thesis arXiv:math/0401221

\bibitem{hag:nam}
Y.~Hagiwara, \emph{Nambu-Dirac manifolds}.  J. Phys. A \textbf{35} (2002), no. 5, 1263--1281.

\bibitem{hit:gen}
N.~J.~Hitchin, \emph{Generalized Calabi-Yau manifolds}. Q. J. Math. \textbf{54} (2003), no. 3, 281--308.

\bibitem{hof:top}
C.~Hofman, and J-S.~Park, \emph{BV quantization of topological open membranes}. Comm. Math. Phys. \textbf{249} (2004), no. 2, 249--271.

\bibitem{ike:pq3}
N.~Ikeda, and K.~Uchino, \emph{QP-structures of degree 3 and 4D topological field theory}. Comm. Math. Phys. \textbf{303} (2011), no. 2, 317--330.

\bibitem{kos:qua}
Y.~Kosmann-Schwarzbach, \emph{Quasi, twisted, and all that... in Poisson geometry and Lie algebroid theory}. Progr. Math., \textbf{232} (2005), Birkhäuser Boston.

\bibitem{kot:char}
 A.~Kotov, and T.~Strobl, \emph{Characteristic classes associated to $Q$-bundles}. Int. J. Geom. Methods Mod. Phys. \textbf{12} (2015), no. 1, 1550006, 26 pp.

\bibitem{lau:rac}
C.~Laurent-Gengoux, and F.~Wagemann, \emph{Lie rackoids}. Ann. Global Anal. Geom. \textbf{50} (2016), no. 2, 187--207.

\bibitem{li:int}
 D.~Li-Bland, and P.~\v{S}evera, \emph{Integration of exact Courant algebroids}. Electron. Res. Announc. Math. Sci. \textbf{19} (2012), 58-–76.
 
\bibitem{mac:dou}
 K.-C.-H.~Mackenzie, \emph{Double Lie algebroids and second-order geometry. II}. Adv. Math. \textbf{154} (2000), no. 1, 46--75. 
 
\bibitem{raj:tes}
R.~Mehta, \emph{Supergroupoids, double structures, and equivariant cohomology}. Thesis (Ph.D.)–University of California, Berkeley. 2006. 133 pp.

\bibitem{raj:mod}
\bysame,  \emph{Lie algebroid modules and representations up to homotopy}. Indag. Math. (N.S.) \textbf{25} (2014), no. 5, 1122--1134.

\bibitem{meh:sym}
 R.~Mehta, and X.~Tang, \emph{Symplectic structures on the integration of exact Courant algebroids}. J. Geom. Phys. \textbf{127} (2018), 68--83.

\bibitem{mnev:lec}
P.~Mnev, \emph{Lectures on Batalin-Vilkovisky formalism and its applications in topological quantum field theory}. arXiv:1707.08096 [math-ph]

\bibitem{rog:inf}
C.~Rogers, \emph{$L_\infty-$algebras from multisymplectic geometry}. Lett. Math. Phys. \textbf{100} (2012), no. 1, 29--50.

\bibitem{roy:on}
D.~Roytenberg, \emph{On the structure of graded symplectic supermanifolds and
Courant algebroids}, Quantization, Poisson brackets and beyond (Manchester, 2001), 169–-185, Contemp. Math., \textbf{315}, Amer. Math. Soc., Providence, RI, (2002).

\bibitem{roy:AKSZ}
\bysame, \emph{AKSZ-BV formalism and Courant algebroid-induced topological field theories}. Lett. Math. Phys. \textbf{79} (2007), no. 2, 143--159.

\bibitem{sev:some}
P.~\v{S}evera, \emph{Some title containing the words "homotopy'' and "symplectic'', e.g. this one}. Travaux mathématiques. Fasc. XVI, 121--137, Trav. Math., \textbf{16}, Univ. Luxemb., Luxembourg, 2005.

\bibitem{sev:der}
\bysame, \emph{$L_\infty-$algebras as $1-$jets of simplicial manifolds (and a bit beyond)}. Preprint: arXiv:math/0612349.

\bibitem{sev:int}
P.~\v{S}evera, and M.~\v{S}ira\v{n}, \emph{Integration of differential graded manifolds}. Preprint: arXiv:1506.04898.

\bibitem{chen:hig}
Y.~Sheng, and C.~Zhu, \emph{Higher Extensions of Lie Algebroids}.  Commun. Contemp. Math. \textbf{19} (2017), no. 3, 1650034, 41 pp.

\bibitem{tak:nam}
L.~Takhtajan, \emph{On foundation of the generalized Nambu mechanics}. Comm. Math. Phys. \textbf{160} (1994), no. 2, 295--315.

\bibitem{vai:lie}
A.~Vaintrob, \emph{Lie algebroids and homological vector fields}. (Russian) Uspekhi Mat. Nauk \textbf{52} (1997), no. 2(314), 161--162.

\bibitem{wad:nam}
A.~Wade, \emph{Nambu-Dirac structures for Lie algebroids}.  Lett. Math. Phys. \textbf{61} (2002), no. 2, 85--99.

\bibitem{zam:inf}
M.~Zambon, \emph{$L_\infty-$algebras and higher analogues of Dirac structures and Courant algebroids}. J. Symplectic Geom. \textbf{10} (2012), no. 4, 563--599.

\end{thebibliography}
\end{document}